\documentclass[11pt]{article}

\headsep 0.5 true cm
\topmargin 0pt
\oddsidemargin 0pt
\parskip=1.2mm

\baselineskip 0.5cm
\textheight 22.0cm
\textwidth  16.0cm

\usepackage{graphicx,color,float,epstopdf}
\usepackage[linesnumbered,ruled]{algorithm2e}
\usepackage{multirow,array}
\usepackage{pst-blur,pstricks-add}
\usepackage{latexsym, bm, amsthm, amsmath,amssymb}
\usepackage{cases}
\usepackage{epsfig}
\usepackage{epstopdf}
\usepackage{contour,soul,color}
\usepackage[colorlinks=true]{hyperref}
\hypersetup{urlcolor=blue, citecolor=blue,linkcolor=blue}

\def \[{\begin{equation}}
\def \]{\end{equation}}
\def \dist{\hbox{dist}}

\def\B{{\cal B}}
\def\A{{\cal A}}
\def\L{{\cal L}}
\def\S{{\cal S}}
\def\E{{\cal E}}
\def\H{{\cal H}}
\def\T{{\cal T}}
\def\M{{\cal M}}
\def\bdes{\begin{description}}
\def\edes{\end{description}}
\def\benu{\begin{enumerate}}
\def\eenu{\end{enumerate}}
\def\bitm{\begin{itemize}}
\def\eitm{\end{itemize}}

\def\epi{{\rm epi }\,}

\def\L{{\cal L}}

\def\M{{\cal M}}

\def\U{{\cal U}}

\def\R{{\sl I\kern-3.2pt R}}

\def\r{\right}

\def\S{{\cal S}}

\def\sqr#1#2{{\vcenter{\hrule height .#2pt
      \hbox{\vrule width .#2pt height#1pt \kern#1pt\vrule width.#2pt}
                       \hrule height.#2pt}}}

\def\U{{\mathcal{U}}}
\def\Z{{\mathbf{Z}}}

\def\r1{{\mathbb{R}}}

\def\Z{{\cal Z}}

\def\U{{\cal U}}
\def\W{{\cal W}}

\newtheorem{lemma}{Lemma}[section]
\newtheorem{proposition}{Proposition}[section]
\newtheorem{corollary}{Corollary}[section]
\newtheorem{theorem}{Theorem}[section]

\newtheorem{definition}{Definition}[section]

\newtheorem{thm}{Theorem}[section]

\newtheorem{ass}[thm]{Assumption}

\def\X{\mathcal{X}}
\def\Y{\mathcal{Y}}
\def\Z{\mathcal{Z}}

\begin{document}
\graphicspath{{FIG2/}}

\vskip -1.5cm
\title{Linear   Rate Convergence of  the Alternating Direction Method of Multipliers for Convex Composite  Quadratic and Semi-Definite   Programming}
\author{Deren Han\thanks{School of Mathematical Sciences, Key Laboratory for NSLSCS of Jiangsu Province,
Nanjing Normal University, Nanjing 210023,  China. Email: \texttt{handeren@njnu.edu.cn}. The research of this author was supported
by the NSFC grants 11371197 and  11431002},  \/ \hskip 0.3cm
Defeng Sun\thanks{Department of Mathematics and Risk Management Institute, National University of Singapore, 10 Lower Kent Ridge Road, Singapore. Email: \texttt{matsundf@nus.edu.sg}. The research of this author was supported in part    by the Academic
Research Fund (Grant No. R-146-000-207-112). }\ \  \ \ and
\/ \hskip 0.3cm Liwei Zhang\thanks{{School of Mathematical Sciences, Dalian University of Technology, China. Email:  \texttt{lwzhang@dlut.edu.cn}. The research of this author was supported  by the National Natural Science Foundation of China under project
No. 91330206.}}
}

\date{August 10, 2015}

\maketitle

\begin{abstract}
In this paper, we aim to prove   the   linear  rate   convergence of the  alternating direction method of multipliers (ADMM) for solving linearly constrained convex composite optimization problems.
Under an error bound condition, we establish  the global linear rate of convergence for   a more general  semi-proximal  ADMM with the dual steplength being restricted to be in   $( 0, (1+\sqrt{5})/2)$. In our analysis,   we assume    neither the  strong convexity nor the strict complementarity except the  error bound condition, which holds automatically for convex composite quadratic programming.  This semi-proximal ADMM, which  covers  the classic  one,    has the advantage to
   resolve the potentially non-solvability issue of the subproblems in the classic   ADMM and  possesses the abilities of handling  the  multi-block cases efficiently. We shall use  convex composite quadratic  programming  and quadratic semi-definite programming   to demonstrate the significance of the obtained results.  Of its own novelty in second-order variational analysis,  a complete characterization is provided on the isolated calmness for the    convex semi-definite optimization problem in terms of its second order sufficient optimality condition and the strict Robinson constraint qualification   for the purpose of  proving the linear rate convergence of the semi-proximal ADMM when applied to two- and  multi-block convex quadratic semi-definite programming.
\end{abstract}

\bigskip\noindent
{\bf Key words}: ADMM, error bound,  global linear rate, isolated calmness, composite quadratic programming, semi-definite optimization.


\section{Introduction}
In this paper, we shall study  the linear rate convergence of the alternating direction method of multipliers (ADMM) for solving  the following convex composite   optimization problem
\[\label{ad}
\min\;\{\vartheta (y)+g(y)+\varphi(z)+h(z): \,  \A^*y+\B^*z=c,\; y\in\Y,\; z\in\Z\},\]
where  $\mathcal{Y}$ and $\mathcal{Z}$ are two finite-dimensional real Euclidean spaces
each equipped with an inner product $\langle\cdot,\cdot\rangle$ and its induced norm $\|\cdot\|$,
$\vartheta:\mathcal{Y}\rightarrow(-\infty, +\infty]$ and $\varphi:\mathcal{Z}\rightarrow(-\infty, +\infty]$  are two
  proper closed convex functions,  $g:\mathcal{Y}\rightarrow(-\infty, +\infty)$ and $h:\mathcal{Z}\rightarrow(-\infty, +\infty)$ are two
continuously differentiable convex functions (e.g., convex quadratic functions),  $\A^*: \mathcal{Y}\to\mathcal{X}$ and $\B^*: \mathcal{Z}\to\mathcal{X}$ are the adjoints of the two linear operators $\A: \mathcal{X}\to\mathcal{Y}$ and $\B:  \mathcal{X}\to\mathcal{Z}$, respectively, with  $\mathcal{X}$ being another  real finite-dimensional Euclidean space
  equipped with an inner product $\langle\cdot,\cdot\rangle$ and its induced norm $\|\cdot\|$ and
 $c\in \mathcal{X}$ is a given point. For any convex function   $\theta: \X \to (-\infty, \infty]$, we use ${\rm dom\,} \theta$  to define its effective domain, i.e., ${\rm dom\,}\theta: =\{x\in \X:\, \theta(x)<\infty\}$,  $\epi \theta$ to denote its epigraph, i.e.,  ${\rm epi\,}\theta: =\{(x,t)\in \X\times \Re:\, \theta(x)\le t\}$ and $\theta^*:\X \to (-\infty, \infty]$ to represent its Fenchel conjugate, respectively.

The classic  ADMM  was   designed   by Glowinski and Marroco \cite{glo75} and  Gabay and Mercier \cite{Gabay:1976ff} and its construction  was much influenced by Rockafellar's works on proximal point algorithms (PPAs)  for solving the more general  maximal monotone inclusion problems \cite{rockafellar76A,rockafellar76B}. The readers may refer to Glowinski \cite{globook14} for a note on the historical development of the classic ADMM.
The  convergence analysis for the classic ADMM   under certain settings was  first conducted  by  Gabay and Mercier \cite{Gabay:1976ff}, Glowinski \cite{glo80book} and Fortin and Glowinski \cite{fortin83}.
For a recent survey on this,  see
  \cite{eck12}.

Our focus of this paper is on the linear rate convergence analysis of the ADMM. This shall be conducted   under a more convenient semi-proximal ADMM (in short, sPADMM) setting  proposed  by Fazel et al. \cite{Fazel} by allowing the dual step-length to be at least as large as  the golden ratio of $1.618$. This sPADMM, which  covers  the classic ADMM,  has the advantage to
   resolve the potentially non-solvability issue of the subproblems in the classic  ADMM. But, perhaps more importantly it possesses the abilities of handling   multi-block convex optimization problems. For example, it has been shown most recently that the sPADMM  plays a pivotal role in solving multi-block convex composite semi-definite programming problems \cite{STYang2015,LSToh2014, CSToh2015} of a low to medium accuracy.  We shall come back to this in Section \ref{Section:LinearRate}.

For any   self-adjoint   positive semi-definite linear operator $\M:\X\to \X$,  denote  $\|x\|_\M:=\sqrt{\langle x , \M x\rangle }$ and   $\dist_\M(x,C)=\inf_{x'\in C}\|x'-x\|_{\M}$ for any $x\in \X$ and any set $C\subseteq \X$. We use ${\cal I}$ to denote the identity mapping from $\X$ to itself. Let $\sigma >0$ be a given parameter. Write $\vartheta_g (\cdot)\equiv \vartheta(\cdot)+ g(\cdot)$ and $\varphi_h (\cdot)\equiv \varphi(\cdot)+ h(\cdot)$. The augmented Lagrangian function of problem (\ref{ad}) is defined by
\[\label{augL}
{\cal L}_{\sigma}(y,z;x):=\vartheta_g (y)+\varphi_h (z)+\langle x,\A^*y+\B^*z-c\rangle+\displaystyle \frac{\sigma}{2}\|\A^*y+\B^*z-c\|^2,\ \forall\, (y,z,x)\in \Y\times \Z\times \X.
\]
 Then the    sPADMM may be described as follows.

\fbox{
 \begin{minipage}[t]{0.9\textwidth}{\noindent\bf sPADMM}: A semi-proximal alternating direction method of multipliers for solving the convex optimization problem (\ref{ad}).

\begin{itemize}
\item[ ]\hskip -.5cm{\bf Step 0.} Input $(y^0,z^0,x^0)\in\hbox{dom}\; \vartheta\times \hbox{dom}\; \varphi\times \mathcal{X}.$ Let  $\tau\in (0,\infty)$ be a positive parameter (e.g., $\tau \in ( 0, (1+\sqrt{5})/2)$\,),  and  $\S:\Y\to \Y$ and $\T:\Z\to \Z$ be two self-adjoint positive semi-definite, not necessarily positive definite, linear operators. Set $k:=0$.
\item[ ]\hskip -.5cm{\bf Step 1.} Set \begin{subnumcases}{\hskip-1cm}\label{xna}
y^{k+1}\in \hbox{arg}\min \, \L_{\sigma }(y,z^k; x^k) +\frac{1}{2}\|y-y^k\|^2_\S\, ,\\
\label{xnb}z^{k+1}\in \hbox{arg}\min\, \L_{\sigma  }(y^{k+1},z; x^k) +\frac{1}{2}\|z-z^k\|^2_\T\, ,\\
\label{xnc}x^{k+1}=x^k+\tau\sigma (\A^*y^{k+1}+\B^*z^{k+1}-c).
\end{subnumcases}
\item[ ]\hskip -.5cm{\bf Step 2.} If a termination criterion is not met, set $k:=k+1$ and go to Step 1.
\end{itemize}

 \end{minipage} }

\medskip
The sPADMM scheme (\ref{xna})--(\ref{xnc})  with $\S=0$ and  $\T=0$  is nothing but the classic ADMM  of  Glowinski and Marroco \cite{glo75} and  Gabay and Mercier \cite{Gabay:1976ff}.
 When $\B ={\cal I}$ and $\A$ is surjective,   the global convergence of the classic ADMM with  any $\tau \in ( 0, (1+\sqrt{5})/2)$ has been established by  Glowinski \cite{{glo80book}} and Fortin and Glowinski \cite{fortin83}.
  Interestingly,  in \cite{Gabay1983}, Gabay
  has further shown that  the  classic  ADMM with $\tau =1$, under the  existence condition of a solution to the Karush-Kuhn-Tucker (KKT) system of problem (\ref{ad}),
  is actually equivalent to the  Douglas-Rachford (DR) splitting method applied to a stationary system to  the  dual of problem (\ref{ad}).  Moreover, Eckstein and Bertsekas \cite{EcksteinB1992} have
proven that the DR splitting method  can be equivalently represented as a special PPA. Thus, one may always use known results on the DR splitting method and the PPA  to study the properties of the classic ADMM with $\tau =1$ (this does not apply to the case that  $\tau \ne 1$ of course) though the corresponding  transformations can be much involved.
 The  above sPADMM scheme (\ref{xna})--(\ref{xnc}) with $\S\succ0$ and $\T\succ 0$ was initiated by Eckstein \cite{eckstein1994} to make the subproblems in
(\ref{xna}) and (\ref{xnb}) easier to solve. Using essentially the same variational   techniques   developed by Glowinski \cite{glo80book} and Fortin and Glowinski \cite{fortin83}, Fazel et al. developed   an extremely easy-to-use convergence theorem
 for the sPADMM   \cite[Appendix B]{Fazel} when the dual step-length $\tau$ is chosen to be in $ ( 0, (1+\sqrt{5})/2)$.   In \cite{STeboulle2014},  Shefi and Teboulle  conducted   a comprehensive  study on the iteration  complexities, in particular in the ergodic sense,  for the sPADMM with   $\tau =1$ and $\B\equiv {\cal I}$. Related results for the more general cases can be found, e.g.,  in \cite{LSToh2014M} for the case that  the linear operators $\S$ and $\T$ are allowed to be indefinite and in \cite{CLSToh2015} for the case that  the objective function is allowed to have a coupled smooth term.
 For  details on choosing $\S$ and $\T$, one may refer to the recent PhD thesis of Li \cite{{LiXDThesis2015}}.

Compared with the large amount of literature\footnote{For example,  according to Google Scholar, the survey paper by Boyd et al.  \cite{boyd11} on the applications of the ADMM with $\tau =1$ has been cited more than 2,289 times as of August 2, 2015.} mainly being devoted to the applications of the ADMM, there is a much   smaller  number of papers targeting the linear rate convergence analysis though there do exist a number of classic results and several  interesting new advancements   on the latter. By using the  aforementioned connections among the DR splitting method, PPAs, and the classic ADMM with $\tau =1$, we can derive the corresponding linear rate convergence of the ADMM from the works of
  Lions and Mercier \cite{LMercier1979} on  the DR splitting method with a globally Lipschitz continuous and strongly monotone operator and   Rockafellar \cite{rockafellar76A,rockafellar76B} and Luque \cite{Luque1984} on the convergence rates of the PPAs under  various error bound conditions imposed on the inverse of   maximal monotone operators. For  example, within this spirit,  Eckstein and Bertsekas \cite{EcksteinB1990}
proved    the global linear convergence rate of the ADMM with $\tau =1$ when it is applied to linear programming by using the equivalence of the ADMM and  a PPA.
For recent new developments on the linear convergence rate of the ADMM, we can  roughly categorize them  into the following three cases:
\begin{description}
\item[(i)] For   convex quadratic programming,  Boley \cite{Boley2013} provided a local linear convergence result for  the
ADMM with $\tau =1$  under the conditions of  the uniqueness of the optimal solutions to both the primal and dual problems and the strict complementarity; in  \cite{HYuan2013}, Han and Yuan removed the restrictive conditions imposed by Boley and established the local  linear rate convergence of the generalized  ADMM in the sense of Eckstein and Bertsekas \cite{EcksteinB1992} for the subsequence $\{(z^k, x^k)\}$; and in \cite{YHan2014}, Yang and Han  showed that the local linear rate result in \cite{HYuan2013} can be globalized under a slightly more general setting for  the
ADMM with $\tau =1$ and a linearized ADMM (a special case of sPADMM with $\S\succ 0$ and $\T\succ 0$) with $\tau =1$, where for the latter the linear rate is established for the whole sequence $\{(y^k,z^k, x^k)\}$ instead of only the subsequence $\{(z^k, x^k)\}$. We remark that when either $\S\succ 0$ or $\T\succ 0$ fails to hold, the linear rate convergence  analysis in \cite{YHan2014} is no longer valid.

\item[(ii)] In \cite{DYin2015}, Deng and Yin provided a number of scenarios  on  the linear rate convergence  for the ADMM and sPADMM with $\tau =1$ under the assumption that either $\vartheta_g (\cdot )$ or $\varphi_h (\cdot )$ is strongly convex with a Lipschitz continuous gradient in addition to the boundedness condition on the generated iteration sequence and others. Deng and Yin's focus is mainly on problems being reformulated from unconstrained composite models with applications in sparse optimization, e.g.,  the models of Lasso  regularized with strongly convex terms. They also made a detailed comparison between their most notable   linear rate convergence result and that   of Lions and Mercier \cite{LMercier1979} on the  DR splitting method   when   applied to a stationary system to the  dual of problem (\ref{ad}).

\item[(iii)]
Assuming an  error bound condition and some others, Hong and Luo \cite{HLuo2012}  provided  a linear rate convergence of  the multi-block  ADMM   with a sufficiently small
step-length $\tau$. Theoretically, this constitutes important progress  on understanding the convergence and the linear rate of convergence of the ADMM. Computationally, however, this is far from being satisfactory as  in practical implementations one always prefers a larger step-length for achieving numerical efficiency.
\end{description}

In this paper, we aim to resolve the linear rate convergence issue for the sPADMM scheme  (\ref{xna})--(\ref{xnc})  with  $\tau \in ( 0, (1+\sqrt{5})/2)$ assuming neither the  strong convexity for $\vartheta_g (\cdot)$ or $\varphi_h (\cdot)$ nor the strict complementarity. Special attention shall be paid  to convex composite    quadratic  programming and quadratic semi-definite programming. For the former, we have a complete picture  and for the latter we show how far we have progressed. More specifically,
our   main  contributions made in this paper include but are not limited to:
 \begin{description}
\item [(1)] Under an error bound condition only, we provide a very general linear rate convergence analysis for the sPADMM with $\tau \in ( 0, (1+\sqrt{5})/2)$. This is made possible by constructing     an elegant inequality on the iteration sequence   via re-organizing the relevant results developed in \cite[Appendix B]{Fazel}.
\item[(2)] For convex composite quadratic programming,  the global linear convergence rate is obtained with no additional conditions  as the error bound assumption holds automatically. By choosing the positive semi-definite linear operators  $\S$ and $\T$ properly, in particular $\T=0$, we demonstrate how the established global linear rate convergence  of the sPADMM can be  applied to   multi-block convex composite quadratic conic programming.

\item[(3)] For convex composite  quadratic semi-definite programming (SDP), a linear convergence rate is established under the assumption that both the primal and dual problems satisfy the second order sufficient optimality  condition, one of eight equivalent conditions proven in this paper. This is achieved via characterizing the isolated calmness of the corresponding  optimality systems.
 \item[(4)]  The obtained  results on the isolated calmness for convex and non-convex semi-definite optimization problems are not only important for the linear rate convergence analysis of the sPADMM but also are interesting in their own right in the context of sensitivity analysis for optimization problems with non-polyhedral cone constraints.
\end{description}

The remaining parts of the this paper are organized as follows. In Section \ref{preliminary}, we conduct brief discussions on  the optimality conditions for  problem (\ref{ad}) and on both the calmness and isolated calmness for multi-valued mappings.  Section \ref{Section:LinearRate} is divided into three parts with the first part focusing on deriving a particularly useful inequality for the iteration sequence generated  from the sPADMM. This inequality, which  grows out of the results in \cite[Appendix B]{Fazel}, is then employed to build up a general linear rate convergence theorem under an error bound condition. The third part of this section is about the applications of the linear convergence theorem of the sPADMM to important convex composite quadratic conic programming. Section \ref{section:isolatedcalmness} is devoted to the characterization of the isolated calmness for composite semi-definite optimization problems, which are not necessarily convex. The sufficient conditions for non-convex semi-definite optimization problems,  which are strongly motivated by the work done in \cite{Sun2006} on Robinson's strong regularity,   can be regarded as natural extensions to those established by Zhang and Zhang \cite{ZZhangLW2015}. The complete characterization of the isolated calmness in the convex case represents a significant step forward in second order variational analysis on  convex optimization problems constrained with non-polyhedral convex cones. In Section \ref{section:QSDP}, for convex composite quadratic semi-definite programming, we provide further deep results on the isolated calmness by relating the second order sufficient optimality condition for the primal problem equivalently to the strict Robinson constraint qualification for the corresponding dual problem.  We make our final conclusions in Section \ref{section:final}.

 \section{Preliminaries}\label{preliminary}
 \setcounter{equation}{0}
 In this section, we summarize some useful preliminaries for subsequent  analysis.

 \subsection{Optimality conditions}

For a multifunction $F:\mathcal{Y}\rightrightarrows\mathcal{Y}$, we say that $F$ is monotone if
\begin{eqnarray}\label{mono}
\langle y'-y, \xi^\prime -\xi\rangle\geq0,\qquad\forall\,  \xi^\prime\in F(y'),~\forall\, \xi\in F(y).
\end{eqnarray}
It is well known that for any proper closed convex function $\theta:\X\to (-\infty, \infty]$,  $\partial \theta(\cdot)$ is a monotone multi-valued function (see \cite{Roc70}), that is,
for any $w_1\in\hbox{dom}\; \theta$ and any $w_2\in\hbox{dom}\; \theta$,
\begin{eqnarray}\label{chui}
\langle \xi-\zeta, w_1-w_2\rangle\geq 0,\qquad\forall \, \xi\in\partial \theta(w_1),~\forall\, \zeta\in\partial\theta(w_2).
\end{eqnarray}

In our analysis, we shall often use the optimality conditions for problem (\ref{ad}). Let $(\bar{y},\bar{z})\in {\rm  dom}( \vartheta) \times {\rm dom}(\varphi)$ be an optimal solution to problem \eqref{ad}. If there exists $\bar{x}\in\mathcal{X}$ such that $( \bar{y}, \bar{z}, \bar{x})$ satisfies the following KKT  system
\begin{eqnarray}\label{KKT}
\left\{\begin{array}{l}
0\in  \partial \vartheta (y )+\nabla g(y )+\A x,\\
0\in  \partial \varphi (z )+\nabla h(z )+\B x,\\
c-\A^* y-\B^* z=0,
\end{array}\right.
\end{eqnarray}
then $( \bar{y}, \bar{z}, \bar{x})$ is called a KKT point  for problem \eqref{ad}. Denote  the solution  set to the KKT system   \eqref{KKT} by $\overline{\Omega}$.
The existence of such   KKT points can be guaranteed if a certain constraint qualification  such as the Slater condition holds:
$$
\exists \ (y',z')\in {\rm ri}  \left (\hbox{dom} (\vartheta)\times  \hbox{dom} (\varphi)\right )\cap  \{(y,z)\in {\cal Y}\times {\cal Z}:\, \A^*y+\B^*z=c\},
$$
where ${\rm ri}(S)$ denotes the relative interior of a given convex set $S$.
 In this paper, instead of using an explicit constraint qualification, we   make the following blanket assumption on the existence of a KKT point.
 \medskip
 \begin{ass}\label{ass.1}
  The KKT system (\ref{KKT})  has a non-empty solution set.
 \end{ass}
 \medskip

Denote $u:=(y,z,x)$ for $y \in {\cal Y}$, $z \in {\cal Z}$ and  $x \in {\cal X}$. Let $\mathcal{U}:=\Y\times\mathcal{Z}\times \X$. Define the KKT mapping  $R:\mathcal{U}\rightarrow\mathcal{U}$ as
\[\label{F}
R(u):= \left(\begin{array}{c}
 y- {\rm Pr}_\vartheta[y-(\nabla g(y)+\A x)]\\
z-{\rm Pr}_\varphi[z-(\nabla h(z)+\B x)]\\
c-\A^* y-\B^* z
\end{array}\right),\quad \forall\, u\in\mathcal{U},
\]
where for any convex function $\theta:\X \to (-\infty,\infty]$, ${\rm Pr}_\theta(\cdot) $ denotes its associated Moreau-Yosida  proximal mapping. If $\theta (\cdot) =\delta _{{\cal K}} (\cdot)$, the indicator function over the closed convex set ${\cal K}\subseteq \X$, then
 ${\rm Pr}_\theta(\cdot)=\Pi_{\cal K}(\cdot)$, the metric projection operator over ${\cal K}$.
Then,    since  the Moreau-Yosida proximal mappings ${\rm Pr}_\vartheta(\cdot) $ and  ${\rm Pr}_\varphi(\cdot) $ are  both globally Lipschitz continuous with modulus one, the mapping $R(\cdot)$ is at least continuous on $\U$ and
$$
\forall\, u\in {\cal U}, \quad R(u)=0 \Longleftrightarrow u\in \overline{\Omega}.
$$

\subsection{Calmness and isolated calmness}
Let  ${\cal X}$ and ${\cal Y}$ be two finite-dimensional real Euclidean  spaces and $F:{\cal X}\rightrightarrows {\cal Y}$ be a set-valued mapping with $(x^0,y^0) \in {\rm gph}\,F$, the graph of $F$. Let  $\textbf{B}_{{\cal Y}}$ denote  the unit ball in  ${\cal Y}$.
 \begin{definition}\label{def:upperLipschitz}
   The  multi-valued mapping  $F:{\cal X}\rightrightarrows {\cal Y}$ is said to be calm  at  $x^0$  if there is  a constant $\kappa_0>0$ along with  a neighborhood $V$ of $x^0$   such that
$$
  F(x)  \subseteq F(x^0)+\kappa_0 \|x-x^0\| \textbf{B}_{{\cal Y}}, \quad  \forall\,  x \in V.
$$
\end{definition}

The above definition of calmness for the multi-valued mapping $F$ comes from \cite[9(30)]{RW98} and it was called the upper Lipschitz continuity in \cite{Robinson1979}. Recall that    the multi-valued mapping $F$ is called piecewise polyhedral if ${\rm gph}\,F$ is the union of finitely many polyhedral sets. In one of his landmark papers, Robinson \cite{Robinson1981} established the following  important   property on the calmness for a piecewise polyhedral multi-valued mapping.

\begin{proposition}\label{prop:piecewisePoly} If the multi-valued mapping $F:{\cal X}\rightrightarrows {\cal Y}$ is  piecewise polyhedral, then $F$ is calm at $x^0$.
\end{proposition}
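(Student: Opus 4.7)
The plan is to exploit the fact that, by definition, $\mathrm{gph}\,F=\bigcup_{i=1}^m P_i$ with each $P_i\subseteq\mathcal{X}\times\mathcal{Y}$ a polyhedral set, and to reduce the calmness of $F$ at $x^0$ to an application of Hoffman's error bound (a.k.a.\ Hoffman's lemma) on each ``piece'' separately. For each $i\in\{1,\dots,m\}$ I would define the multifunction $F_i:\mathcal{X}\rightrightarrows\mathcal{Y}$ by $\mathrm{gph}\,F_i=P_i$, so that $F(x)=\bigcup_{i}F_i(x)$ for every $x\in\mathcal{X}$. The indices naturally split into $I_1:=\{i:x^0\in\mathrm{dom}\,F_i\}$ and $I_2:=\{i:x^0\notin\mathrm{dom}\,F_i\}$, and note that $F_i(x^0)\subseteq F(x^0)$ for every $i\in I_1$.

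For $i\in I_2$, the domain $\mathrm{dom}\,F_i=\pi_{\mathcal{X}}(P_i)$ is a (closed) polyhedron not containing $x^0$, so there is an open neighborhood $V_i$ of $x^0$ with $V_i\cap\mathrm{dom}\,F_i=\emptyset$; thus $F_i$ contributes nothing to $F(x)$ for $x\in V_i$. For $i\in I_1$, I would write $P_i=\{(x,y):A_i x+B_i y\le b_i\}$, so that $F_i(x^0)=\{y':B_i y'\le b_i-A_i x^0\}$. For any $x\in\mathcal{X}$ and $y\in F_i(x)$, the vector $y$ satisfies $B_i y\le (b_i-A_i x^0)+A_i(x^0-x)$, so it violates the system defining $F_i(x^0)$ by at most $\|A_i\|\,\|x-x^0\|$ componentwise. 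Hoffman's error bound then furnishes a constant $\kappa_i>0$, depending only on $B_i$, and a $y'\in F_i(x^0)$ with $\|y-y'\|\le\kappa_i\|A_i\|\,\|x-x^0\|$; this gives $F_i(x)\subseteq F_i(x^0)+\widetilde\kappa_i\|x-x^0\|\mathbf{B}_{\mathcal{Y}}$ with $\widetilde\kappa_i:=\kappa_i\|A_i\|$.

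Finally, I would assemble the conclusion by setting $V:=\bigcap_{i\in I_2}V_i$ (with $V=\mathcal{X}$ if $I_2=\emptyset$) and $\kappa_0:=\max_{i\in I_1}\widetilde\kappa_i$ (with the convention that the pathological case $I_1=\emptyset$ forces $F\equiv\emptyset$ on $V$, and the inclusion holds trivially). For any $x\in V$ and any $y\in F(x)$, the pair $(x,y)$ lies in some $P_i$; the definition of $V$ forces $i\in I_1$, whence $y\in F_i(x^0)+\widetilde\kappa_i\|x-x^0\|\mathbf{B}_{\mathcal{Y}}\subseteq F(x^0)+\kappa_0\|x-x^0\|\mathbf{B}_{\mathcal{Y}}$, which is exactly the definition of calmness at $x^0$.

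The only non-trivial ingredient is Hoffman's lemma, so the main obstacle is producing a clean reduction to it. The subtle point is that the Hoffman constant must depend only on the matrix $B_i$ (not on the right-hand side, which varies with $x^0$); this is crucial because it guarantees that $\widetilde\kappa_i$ is a genuine constant, and hence that $\kappa_0$ depends only on the finitely many matrices describing the $P_i$, not on $x^0$ itself. Once this uniformity is in hand, the separation argument based on the closedness of the $\mathrm{dom}\,F_i$ for $i\in I_2$ is routine.
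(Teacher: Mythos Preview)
Your argument is correct. The reduction to Hoffman's error bound on each polyhedral piece $P_i$, together with the exclusion of the inactive pieces via the closedness of their domains, is exactly the standard route to this result; the key observation you highlight---that the Hoffman constant for the system $B_i y' \le d$ depends only on $B_i$ and not on the right-hand side $d$---is precisely what makes the argument work for $i\in I_1$.

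There is nothing to compare your proof against here: the paper does not supply a proof of this proposition at all, but simply attributes the result to Robinson \cite{Robinson1981} and states it for later use. Robinson's original argument is likewise grounded in Hoffman-type bounds for linear inequality systems (and in fact yields a stronger conclusion: the modulus $\kappa_0$ can be taken uniform in $x^0$, as you also observe, though your neighborhood $V$ still depends on $x^0$ through the pieces in $I_2$). Your write-up is a faithful and complete reconstruction of the classical proof.
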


 Next, we   give the definition of isolated calmness for   $F: {\cal X}\rightrightarrows {\cal Y}$  at  $x^0$ for $y^0$.
\begin{definition}\label{def:calm}
   The multi-valued mapping  $F:{\cal X}\rightrightarrows {\cal Y}$ is said to be isolated  calm at  $x^0$ for $y^0$  if there is a constant $\kappa_0>0$ along with  a neighborhood $V$ of $x^0$ and a neighborhood $W$ of $y^0$  such that
$$
  F(x) \cap W \subseteq \{y^0\}+\kappa_0 \|x-x^0\| \textbf{B}_{{\cal Y}}, \quad  \forall\,  x \in V.
$$
\end{definition}
The isolated calmness given in Definition \ref{def:calm} was  called differently in the literature, e.g., the    local upper Lipschitz continuity in \cite{Dontchev1995,Levy96}, to distinguish it from Robinson's definition of upper Lipschitz continuity \cite{Robinson1979}. Here we adopt the usage in \cite{DR2001,Dontchev2009}.
The concept of graphical derivative  of $F$ \cite[8.33 Definition]{RW98} is a convenient tool for investigating the isolated calmness property.  The graphical derivative of $F$
at $x^0$ for $y^0\in F(x^0)$ is the set-valued mapping $D F(x^0| y^0):{\cal X}  \rightrightarrows {\cal Y}$  whose graph is the tangent cone
$T_{{\rm gph}\, F} (x^0,y^0)$, namely for any  $(u,v) \in \X\times \Y$,
$$
v \in DF(x^0|y^0)(u) \Longleftrightarrow (u,v)\in T_{{\rm gph}\, F} (x^0,y^0).
$$
 In other words,
$v \in D F(x^0|y^0)(u)$ if and only if
$$
\left
\{
\begin{array}{l}
{\rm there\,\, exist\,\, sequences}\,\,\, t_k \rightarrow 0_+, u^k \rightarrow u \mbox{ and  }v^k \rightarrow v\\[4pt]
{\rm such\,\, that}\,\,\, v^k \in \displaystyle \frac{F(x^0 +t_ku^k)-y^0}{t_k} \mbox{ for all } k.
\end{array}
\right.
$$
It follows from    \cite[8(19)]{RW98} that the following equivalence holds:
\begin{equation}\label{inv-D}
v \in DF(x^0|y^0)(u) \Longleftrightarrow u \in D(F^{-1})(y^0|x^0)(v).
\end{equation}

A basic characterization of the isolated calmness property for a set-valued mapping at a point  is given by the following lemma.
\begin{lemma}\label{lem.1.1}
(King and Rockafellar \cite{KR92}, Levy \cite{Levy96}) Let $(x^0,y^0)\in {\rm gph}\, F$. Then  $F$ is isolated calm at  $x^0$ for $y^0$ if and only if $\{0\} = DF(x^0|y^0)(0)$.
\end{lemma}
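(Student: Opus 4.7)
The plan is to prove the equivalence by handling the two directions separately: the forward direction is a direct computation using the definition, while the converse uses a contrapositive rescaling argument.

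\emph{Forward direction.} Suppose $F$ is isolated calm at $x^0$ for $y^0$ with modulus $\kappa_0$ and corresponding neighborhoods $V$ of $x^0$ and $W$ of $y^0$. Take any $v\in DF(x^0|y^0)(0)$ and unpack the definition: there exist $t_k\downarrow 0$, $u^k\to 0$, and $v^k\to v$ with $y^0+t_k v^k\in F(x^0+t_k u^k)$. For $k$ sufficiently large, $x^0+t_k u^k\in V$ and $y^0+t_k v^k\in W$, so the isolated calmness bound yields $\|t_k v^k\|\le \kappa_0 \|t_k u^k\|$. Dividing by $t_k>0$ and passing to the limit gives $\|v\|\le \kappa_0\cdot 0=0$, hence $v=0$.

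\emph{Reverse direction.} I would argue by contraposition. If $F$ fails to be isolated calm at $x^0$ for $y^0$, then for each integer $k\ge 1$ there exist $x^k$ with $\|x^k-x^0\|<1/k$ and $y^k\in F(x^k)$ with $\|y^k-y^0\|<1/k$ satisfying $\|y^k-y^0\|>k\|x^k-x^0\|$. Discarding the trivial indices with $y^k=y^0$ (which would force $x^k=x^0$), define $t_k:=\|y^k-y^0\|>0$, $v^k:=(y^k-y^0)/t_k$, and $u^k:=(x^k-x^0)/t_k$. Then $t_k\to 0_+$, $\|v^k\|=1$, and $\|u^k\|<1/k\to 0$. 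By compactness of the unit sphere, after extracting a subsequence, $v^k\to v$ with $\|v\|=1$. Since $v^k\in(F(x^0+t_k u^k)-y^0)/t_k$ by construction, we obtain $v\in DF(x^0|y^0)(0)\setminus\{0\}$, contradicting the hypothesis.

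The one delicate point is the normalization in the reverse direction: choosing $t_k=\|y^k-y^0\|$ (rather than $\|x^k-x^0\|$) simultaneously guarantees that $\|v^k\|\equiv 1$, so compactness yields a nontrivial limit, and, via the violating inequality, forces $\|u^k\|<1/k$, so that $u^k\to 0$ as required by the definition of the graphical derivative. Everything else is routine once this setup is in place.
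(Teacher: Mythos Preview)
Your argument is correct. The paper does not supply a proof of this lemma; it is quoted directly from King--Rockafellar and Levy as a known characterization, so there is nothing to compare against. Your forward direction is a clean unpacking of the definitions, and your contrapositive argument for the converse---normalizing by $t_k=\|y^k-y^0\|$ so that the direction sequence $v^k$ stays on the unit sphere while $u^k\to 0$---is exactly the standard rescaling trick and works as written.

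One cosmetic remark: the parenthetical ``(which would force $x^k=x^0$)'' is not quite the right justification. The case $y^k=y^0$ is in fact impossible outright, since the violating inequality $\|y^k-y^0\|>k\|x^k-x^0\|\ge 0$ is strict; there is nothing to discard. This does not affect the validity of the proof.
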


\section{A general theorem on the linear rate convergence}\label{Section:LinearRate}
\setcounter{equation}{0}

In this section, we shall establish a general theorem on the linear convergence rate of the sPADMM scheme \eqref{xna}-\eqref{xnc}.


First we recall the global convergence of the sPADMM from \cite[Appendix B]{Fazel}.
 Since both  $\partial \vartheta$ and $\partial \varphi$ are maximally monotone, there exist two self-adjoint and positive semi-definite linear operators
$\Sigma_{\vartheta_g}$ and $\Sigma_{\varphi_h}$ such that for all $y',y \in {\rm dom}\,\vartheta_g$, $\xi \in \partial \vartheta_g(y)$ and $\xi'\in \partial \vartheta_g(y')$, and for all $z',z\in {\rm dom}\,\varphi_h$, $\zeta \in \partial \varphi_h(z)$ and $\zeta'\in \partial \varphi_h(z')$,
\begin{equation}\label{eq:B.4}
\langle \xi'-\xi,y'-y \rangle \geq \|y'-y\|^2_{\Sigma_{\vartheta_g}},\,\, \ \langle \zeta'-\zeta, z'-z \rangle \geq \|z'-z\|^2_{\Sigma_{\varphi_h}}.
\end{equation}

For notational convenience, let $\E: {\cal X} \rightarrow {\cal U}:={\cal Y}\times {\cal Z}\times \X$ be a linear operator  such that its adjoint $\E^*$ satisfies   $\E^*(y,z,x)=\A^*y+\B^*z$ for any $(y,z,x)\in {\cal Y}\times {\cal Z}\times \X$ and for  $u:=(y,z,x)\in {\cal U}$ and $u':=(y',z',x')\in {\cal U}$, define
$$
\theta (u,u'):=(\tau\sigma )^{-1}\|x-x'\|^2+\|y-y'\|_{\S}^2+\|z-z'\|_{\T}^2+\sigma \|\B^*(z-z')\|^2.
$$
The following theorem, which will be used in the following, is  adapted  from Appendix B of \cite{Fazel}.
\begin{theorem}\label{Th:B1}
 Let Assumption  \ref{ass.1}   be satisfied.   Suppose that the sPADMM generates a well defined infinite sequence $\{u^k\}$. Let  $\bar u=(\bar y,\bar z,\bar x) \in \overline{\Omega}$.  For $k\ge 1,$   denote
\begin{equation}\label{eq:B.7}
\left
\{
\begin{array}{l}
\delta_{k}:=\tau (1-\tau+\min \{\tau,\tau^{-1}\}) \sigma  \|\B^*(z^{k}-z^{k-1})\|^2+\|z^{k}-z^{k-1}\|^2_\T,\\[4pt]
\nu_{k}:=\delta_{k}+\|y^{k}-y^{k-1}\|^2_\S+2\|y^{k}- \bar y\|^2_{\Sigma_{\vartheta_g}}+2\|z^{k}-\bar z\|^2_{\Sigma_{\varphi_h}}.
\end{array}
\right.
\end{equation}
Then, the following results hold:
\begin{description}
\item[(i)] For any $k\geq 1$,
\begin{equation}\label{eq:B.8}
\begin{array}{l}
\left [\theta(u^{k+1},\overline u)+\|z^{k+1}-z^k\|^2_\T+\displaystyle (1-\min\{\tau,\tau^{-1}\})\sigma \|\E^*(y^{k+1},z^{k+1},0 )-c\|^2\right]\\[16pt]
 -\left[\theta(u^{k},\bar u)+\|z^k-z^{k-1}\|^2_\T+\displaystyle (1-\min\{\tau,\tau^{-1}\})\sigma  \|\E^*(y^{k},z^{k},0)-c\|^2\right]\\[16pt]
\leq -\left[\nu_{k+1}+\displaystyle (1-\tau+\min\{\tau,\tau^{-1}\})\sigma \|\E^*(y^{k+1},z^{k+1},0)-c\|^2\right].
\end{array}
\end{equation}
\item[(ii)] Assume  that both $ \Sigma_{\vartheta_g} +\S+ \sigma \A\A^*$ and $ \Sigma_{\varphi_h} +\T+ \sigma \B\B^*$ are positive definite so that the sequence    $\{u^k\}$ is automatically well defined.  If $\tau \in (0, (1+\sqrt{5})/2)$, then the whole sequence $\{(y^k,z^k,x^k)\}$ converges to     a KKT point in $\overline{\Omega}$.
    \end{description}
\end{theorem}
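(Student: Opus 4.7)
The plan is to prove part (i) by the classical Glowinski--Fortin variational route, and then deduce part (ii) from (i) by extracting a telescoping inequality together with a standard cluster-point argument.

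For part (i), I would first rewrite the optimality conditions of the two subproblems \eqref{xna} and \eqref{xnb} in terms of the augmented Lagrangian. Introducing the intermediate multiplier $\tilde x^{k+1}:=x^k+\sigma(\A^*y^{k+1}+\B^*z^{k})-c$ (for the $y$-step) and the true multiplier $\hat x^{k+1}:=x^k+\sigma(\A^*y^{k+1}+\B^*z^{k+1}-c)$ (for the $z$-step), the first-order conditions read $-\A\tilde x^{k+1}-\S(y^{k+1}-y^k)\in\partial\vartheta_g(y^{k+1})$ and $-\B\hat x^{k+1}-\T(z^{k+1}-z^k)\in\partial\varphi_h(z^{k+1})$. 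Combining these with the KKT relations at $\bar u$ and applying the generalized monotonicity property \eqref{eq:B.4} yields
\begin{equation*}
\langle y^{k+1}-\bar y,\A(\tilde x^{k+1}-\bar x)+\S(y^{k+1}-y^k)\rangle+\|y^{k+1}-\bar y\|^2_{\Sigma_{\vartheta_g}}\le 0,
\end{equation*}
and an analogous inequality for the $z$ block. The crucial substitution is $\hat x^{k+1}=x^k+\tau^{-1}(x^{k+1}-x^k)$ coming from the multiplier update \eqref{xnc}, so that the cross terms in $x$ can be expressed through the genuine increment $x^{k+1}-x^k$.

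Next, I would add the two monotonicity inequalities, invoke the identities $2\langle a-b,\M(a-c)\rangle=\|a-b\|^2_\M+\|a-c\|^2_\M-\|b-c\|^2_\M$ for the $\S$, $\T$ pieces, and rewrite the bilinear cross terms between $\A^*(y^{k+1}-\bar y)$, $\B^*(z^{k+1}-\bar z)$, and $x^{k+1}-x^k$ using the feasibility residual $\A^*\bar y+\B^*\bar z=c$. After grouping the contributions of the form $(\tau\sigma)^{-1}\|x^{k+1}-\bar x\|^2$, $\sigma\|\B^*(z^{k+1}-\bar z)\|^2$ and the skew cross term $\langle x^{k+1}-x^k,\B^*(z^{k+1}-z^k)\rangle$, the latter is handled through Young's inequality calibrated to produce exactly the coefficients $(1-\tau+\min\{\tau,\tau^{-1}\})\sigma$ and $\tau(1-\tau+\min\{\tau,\tau^{-1}\})\sigma$, which are the reason the golden-ratio bound $(1+\sqrt 5)/2$ appears as the root of $1-\tau+\tau^{-1}=0$. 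The resulting inequality, after collecting like terms, is precisely \eqref{eq:B.8}.

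For part (ii), note that under positive definiteness of $\Sigma_{\vartheta_g}+\S+\sigma\A\A^*$ and $\Sigma_{\varphi_h}+\T+\sigma\B\B^*$, each subproblem has a unique minimizer, so $\{u^k\}$ is well defined. When $\tau\in(0,(1+\sqrt 5)/2)$, the coefficients $\tau(1-\tau+\min\{\tau,\tau^{-1}\})$ and $1-\tau+\min\{\tau,\tau^{-1}\}$ are strictly positive while $1-\min\{\tau,\tau^{-1}\}\ge 0$, so \eqref{eq:B.8} shows that the nonnegative quantity $\Phi_k:=\theta(u^{k},\bar u)+\|z^k-z^{k-1}\|^2_\T+(1-\min\{\tau,\tau^{-1}\})\sigma\|\E^*(y^k,z^k,0)-c\|^2$ is nonincreasing and that $\sum_k\nu_{k+1}<\infty$ together with $\sum_k\|\E^*(y^{k+1},z^{k+1},0)-c\|^2<\infty$. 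This gives boundedness of $\{u^k\}$, $\|y^{k+1}-\bar y\|_{\Sigma_{\vartheta_g}}\to 0$, $\|z^{k+1}-\bar z\|_{\Sigma_{\varphi_h}}\to 0$, $\|\B^*(z^{k+1}-z^k)\|\to 0$, $\|z^{k+1}-z^k\|_\T\to 0$, $\|y^{k+1}-y^k\|_\S\to 0$ and $\|\A^*y^{k+1}+\B^*z^{k+1}-c\|\to 0$. Any cluster point $u^\infty$ can then be shown to satisfy the KKT system by passing to the limit in the subdifferential inclusions (using closedness of $\partial\vartheta_g$, $\partial\varphi_h$ and the fact that the residual quantities vanish, with the positive definiteness of $\Sigma_{\vartheta_g}+\S+\sigma\A\A^*$ and $\Sigma_{\varphi_h}+\T+\sigma\B\B^*$ yielding $\|y^{k+1}-y^k\|\to 0$ and $\|z^{k+1}-z^k\|\to 0$). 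Finally, applying \eqref{eq:B.8} with $\bar u$ replaced by $u^\infty$ and using $\Phi_k\to 0$ along the cluster subsequence forces $\Phi_k\to 0$ on the whole sequence, so $u^k\to u^\infty$.

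The main obstacle will be in part (i): choreographing the cross terms and Young's inequality so that the residual coefficients come out to be exactly $(1-\tau+\min\{\tau,\tau^{-1}\})\sigma$ on both the difference $\|\B^*(z^{k+1}-z^k)\|^2$ and the feasibility residual $\|\E^*(y^{k+1},z^{k+1},0)-c\|^2$. This is the step where the golden-ratio threshold enters and where bookkeeping mistakes would silently shrink the admissible range of $\tau$; everything else is either monotonicity, polarization identities, or standard passage-to-the-limit arguments.
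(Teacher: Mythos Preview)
The paper does not actually prove this theorem: it is stated as an adaptation of results from \cite[Appendix B]{Fazel} and is invoked without proof. Your sketch follows precisely the classical Glowinski--Fortin variational route used there (optimality conditions of the two subproblems, monotonicity \eqref{eq:B.4}, polarization identities for the $\S$ and $\T$ terms, and a calibrated Young inequality on the cross term $\langle x^{k+1}-x^k,\B^*(z^{k+1}-z^k)\rangle$ producing the golden-ratio coefficients), so in substance your plan coincides with the cited proof.
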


\medskip
For any self-adjoint linear operator $\M: \X\to \X$, we use $\lambda_{\max}(\M)$ to denote its largest eigen-value.
Define
$\kappa:=\max\left\{\kappa_1,\kappa_2,\kappa_3\right\}$,
where $$\kappa_1: =3\|\S\|,\;\; \kappa_2:  =\max\{   3 \sigma \lambda_{\max} (\A \A^*), 2 \|\T \|\}$$
and
$$\kappa_3:=3{(1-\tau)^2}{\sigma }\lambda_{\max}(\A\A^*)+2{(1-\tau)^2}{\sigma }\lambda_{\max}(\B\B^*)+ {\sigma ^{-1}}.$$
Let
\[\label{barH}
  \H_0:=\kappa \,{\rm Diag}\left (\S, \T+\sigma \B\B^*, (\tau^2\sigma )^{-1}{\cal I}\right)
\]
be a block-diagonal positive semi-definite linear operator from $ \Y\times \Z\times \X$ to itself such that
$$
 \H_0(y,z,x) ={ {\kappa}}\left (\S y,\, (\T+\sigma \B\B^*)z,\, (\tau^2\sigma )^{-1}x\right), \quad \forall\, (y,z,x)\in \Y\times \Z\times \X.
$$
\begin{lemma}\label{jiu}
Let $\{u^k:=(y^k,z^k,x^k)\}$ be the infinite sequence generated by the sPADMM scheme \eqref{xna}-\eqref{xnc}.
Then for any $k\ge 0$,
\begin{eqnarray}\label{cou}
\|u^{k+1}-u^k\|^2_{ \H_0}\geq \|R(u^{k+1})\|^2.
\end{eqnarray}

\end{lemma}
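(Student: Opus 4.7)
The plan is to bound each of the three blocks of $R(u^{k+1})$ separately by exploiting the first-order optimality conditions for the two subproblems \eqref{xna}-\eqref{xnb} together with the multiplier update \eqref{xnc}, and then to assemble the resulting terms so that the constants $\kappa_1,\kappa_2,\kappa_3$ in $\H_0$ emerge naturally.

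First I will write out the optimality conditions
\begin{eqnarray*}
0 &\in& \partial\vartheta(y^{k+1})+\nabla g(y^{k+1})+\A\bigl[x^k+\sigma(\A^*y^{k+1}+\B^*z^k-c)\bigr]+\S(y^{k+1}-y^k),\\
0 &\in& \partial\varphi(z^{k+1})+\nabla h(z^{k+1})+\B\bigl[x^k+\sigma(\A^*y^{k+1}+\B^*z^{k+1}-c)\bigr]+\T(z^{k+1}-z^k),
\end{eqnarray*}
and use $x^{k+1}-x^k=\tau\sigma(\A^*y^{k+1}+\B^*z^{k+1}-c)$ to rewrite the bracketed quantities in terms of $x^{k+1}$, $(x^{k+1}-x^k)$ and $\B^*(z^{k+1}-z^k)$. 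Explicitly, $x^k+\sigma(\A^*y^{k+1}+\B^*z^{k+1}-c)=x^{k+1}+\frac{1-\tau}{\tau}(x^{k+1}-x^k)$, and the corresponding expression with $z^k$ differs by $-\sigma\B^*(z^{k+1}-z^k)$.

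Next, since $\xi\in\partial\vartheta(y)$ is equivalent to $y={\rm Pr}_\vartheta(y+\xi)$ and ${\rm Pr}_\vartheta$ is $1$-Lipschitz, the first block of $R(u^{k+1})$ can be rewritten as the difference of two proximal evaluations, yielding
\[
\|R_1(u^{k+1})\|\le\Bigl\|\tfrac{1-\tau}{\tau}\A(x^{k+1}-x^k)-\sigma\A\B^*(z^{k+1}-z^k)+\S(y^{k+1}-y^k)\Bigr\|.
\]
The same device applied to the $z$-subproblem gives
\[
\|R_2(u^{k+1})\|\le\Bigl\|\tfrac{1-\tau}{\tau}\B(x^{k+1}-x^k)+\T(z^{k+1}-z^k)\Bigr\|,
\]
while $\|R_3(u^{k+1})\|^2=(\tau\sigma)^{-2}\|x^{k+1}-x^k\|^2$ follows immediately from \eqref{xnc}.

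Finally, I will apply the elementary inequality $\|a+b+c\|^2\le 3(\|a\|^2+\|b\|^2+\|c\|^2)$ to the first bound and $\|a+b\|^2\le 2(\|a\|^2+\|b\|^2)$ to the second, use $\|\A v\|^2\le\lambda_{\max}(\A\A^*)\|v\|^2$, $\|\B v\|^2\le\lambda_{\max}(\B\B^*)\|v\|^2$, and $\|\S w\|^2\le\|\S\|\,\|w\|^2_\S$, $\|\T w\|^2\le\|\T\|\,\|w\|^2_\T$. Collecting the $\|x^{k+1}-x^k\|^2$ contributions produces precisely the coefficient $\kappa_3\cdot(\tau^2\sigma)^{-1}$; the $y$-contribution is $3\|\S\|\,\|y^{k+1}-y^k\|^2_\S=\kappa_1\|y^{k+1}-y^k\|^2_\S$; and the $z$-contribution, being $3\sigma\lambda_{\max}(\A\A^*)\cdot\sigma\|\B^*(z^{k+1}-z^k)\|^2+2\|\T\|\,\|z^{k+1}-z^k\|^2_\T$, is bounded by $\kappa_2\|z^{k+1}-z^k\|^2_{\T+\sigma\B\B^*}$. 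Bounding each constant by $\kappa=\max\{\kappa_1,\kappa_2,\kappa_3\}$ yields \eqref{cou}. The only mildly delicate point is the bookkeeping in the elimination of $x^k$, which must introduce the cross term $\sigma\A\B^*(z^{k+1}-z^k)$ in the right place so that the resulting bound can be absorbed into the block $\T+\sigma\B\B^*$; everything else is a routine application of Cauchy--Schwarz and the non-expansiveness of the proximal map.
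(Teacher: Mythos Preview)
Your proposal is correct and follows essentially the same approach as the paper's proof: both derive the optimality conditions for the $y$- and $z$-subproblems, use the multiplier update \eqref{xnc} to eliminate $x^k$ (your expression $x^{k+1}+\tfrac{1-\tau}{\tau}(x^{k+1}-x^k)$ is algebraically the same as the paper's $x^k+\tau^{-1}(x^{k+1}-x^k)$), apply the $1$-Lipschitz property of the proximal maps to bound each block of $R(u^{k+1})$, and then use the inequalities $\|a+b+c\|^2\le 3(\|a\|^2+\|b\|^2+\|c\|^2)$ and $\|a+b\|^2\le 2(\|a\|^2+\|b\|^2)$ to collect the coefficients $\kappa_1,\kappa_2,\kappa_3$. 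The bookkeeping you flag as ``mildly delicate'' is handled identically in the paper, and your treatment of the $z$-contribution via $\kappa_2=\max\{3\sigma\lambda_{\max}(\A\A^*),2\|\T\|\}$ matches the paper exactly.
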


\begin{proof}The optimality condition for \eqref{xna} is
\[\label{ox}
 0\in\partial \vartheta (y^{k+1})+\nabla g(y^{k+1})+\A[x^k+\sigma (\A^*y^{k+1}+\B^*z^{k}-c)]+\S(y^{k+1}-y^k).\]
From the definition of $x^{k+1}$, we have
$$x^k+\sigma (\A^*y^{k+1}+\B^*z^k-c)=-\sigma  \B^*(z^{k+1}-z^k)+x^k+\tau^{-1}(x^{k+1}-x^k).$$
It then follows from \eqref{ox}  that
\begin{eqnarray}\label{ox2}
 0&\in&\partial \vartheta (y^{k+1})+\nabla g(y^{k+1})+\A[x^k+\sigma (\A^*y^{k+1}+\B^*z^{k}-c)]+\S(y^{k+1}-y^k)\nonumber\\
 &=& \partial \vartheta (y^{k+1})+\nabla g(y^{k+1})+\A[\sigma \B^*(z^k-z^{k+1})+x^k+\tau^{-1}(x^{k+1}- x^k)]+\S(y^{k+1}-y^k),\nonumber\end{eqnarray}
 which implies
 \begin{equation}\label{ox3}
 \begin{array}{l}
y^{k+1}=\\[4pt]
{\rm Pr}_\vartheta\left(y^{k+1}-\left(\nabla g(y^{k+1})+\A[\sigma  \B^*(z^k-z^{k+1})+x^k+\tau^{-1}(x^{k+1}- x^k)]
+\S(y^{k+1}-y^k)\right)\right).
\end{array}
\end{equation}
Noting that since $z^{k+1}$ is a solution to the subproblem \eqref{xnb},
we have that
$$0\in\partial \varphi (z^{k+1})+\nabla h(z^{k+1})+\B x^k+\sigma  \B(\A^*y^{k+1}+\B^*z^{k+1}-c)+\T(z^{k+1}-z^k),
$$
which is equivalent to
$$
0\in\partial \varphi(z^{k+1})+\nabla h(z^{k+1})+\B[x^k+\tau^{-1}(x^{k+1}-x^k)]+\T(z^{k+1}-z^k).
$$
Thus, we have
\[\label{suby2}
z^{k+1}={\rm Pr}_\varphi\left (z^{k+1}-\left(\nabla h(z^{k+1})+\B[x^k+\tau^{-1}(x^{k+1}-x^k)]+\T(z^{k+1}-z^k)\right)\right).
\]
Note that from \eqref{xnc},
\[\label{suby3}
x^{k+1}=x^{k}+\tau \sigma  (\A^*y^{k+1}+\B^*z^{k+1}-c).
\]
Then, by  coming (\ref{ox3}), (\ref{suby2}) and (\ref{suby3}) and  noticing of the Lipschitz continuity of  the Moreau-Yosida proximal mappings, we obtain from the definition of $R(\cdot)$ in $(\ref{F})$ that
$$
\begin{array}{ll}&\quad \|R(u^{k+1})\|^2 \\[4pt]
& \leq \|\S(y^{k+1}-y^k)+\sigma \A \B^*(z^{k+1}-z^k)+(1-\tau^{-1}) \A(x^{k+1}-x^k)\|^2\\[4pt]
&+\|\T(z^{k+1}-z^k)+(1-\tau^{-1})\B(x^{k+1}-x^k)\|^2+{ {(\tau \sigma )^{-2}}}\|x^{k+1}-x^k\|^2\\[4pt]
&\leq \left[3\|\S \|\|y^{k+1}-y^k\|_{\S}^2+3\sigma ^2\lambda_{\max}(\A \A^*) \|\B(z^{k+1}-z^k)\|^2+3(1-\tau^{-1})^2\|\A(x^{k+1}-x^k)\|^2\right]\\[4pt]
&\quad +\left[2\|\T\|\|z^{k+1}-z^k\|_\T^2+2(1-\tau^{-1})^2\|\B(x^{k+1}-x^k)\|^2+{ {(\tau \sigma )^{-2}}}\|x^{k+1}-x^k\|^2\right]\\[4pt]
&\leq \kappa_1 \|y^{k+1}-y^k\|_{\S}^2+\kappa_2\|z^{k+1}-z^k\|^2_{\T+\sigma \B \B^*}+\kappa_3 (\tau^2\sigma )^{-1}\|x^{k+1}-x^k\|^2,
\end{array}
$$
which immediately implies \eqref{cou}.
\end{proof}

\medskip

For any $\tau\in (0, \infty)$, define
$$
s_\tau:=\displaystyle \frac{5-\tau-3\min \{\tau,\tau^{-1}\}}{4} \quad \& \quad  t_\tau:=\displaystyle \frac{1-\tau+\min \{\tau,\tau^{-1}\}}{8}.
$$
Note that
\[\label{eq:t-stau}1/4\le s_\tau\le (5-2\sqrt{3})/4 \quad \& \quad 0< t_\tau\le 1/8 , \quad \forall\, \tau \in ( 0, (1+\sqrt{5})/2).
\]
Denote
\[\label{M1}
\M:={\rm Diag}\left(
\S+\Sigma_{\vartheta_g}, \T+\Sigma_{\varphi_h}+\sigma \B\B^*,  (\tau\sigma )^{-1}{\cal I} \right)+s_\tau \sigma  \E\E^*
\]
and
\[
 \H: ={\rm Diag}\left(\S+\displaystyle \frac{1}{2}\Sigma_{\vartheta_g} , \T+\displaystyle \frac{1}{2}\Sigma_{\varphi_h}+\displaystyle \tau \sigma\B\B^*, 4t_\tau(\tau^2\sigma)^{-1}{\cal I} \right)+t_\tau\sigma \E\E^*. \]
Then we immediately get the following relation
\[\label{importin1}
\kappa \H\succeq \min \{\tau, 4t_\tau\}   \H_0 +\kappa  t_\tau \sigma  \E\E^*, \quad \forall\, \tau \in ( 0, (1+\sqrt{5})/2)
.\]

\begin{proposition}\label{lem:pd} Let $\tau \in ( 0, (1+\sqrt{5})/2)$. Then
$$
  \Sigma_{\vartheta_g} +\S+ \sigma  \A\A^*\succ 0 \ \& \ \Sigma_{\varphi_h} +\T+ \sigma \B\B^*\succ 0\Longleftrightarrow
  \M\succ 0 \Longleftrightarrow   \H\succ 0.
$$
\end{proposition}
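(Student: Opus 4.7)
The plan is to rewrite each of the three conditions as the positive definiteness of an explicit quadratic form and then connect them by elementary linear algebra. Using the identity $\langle u, \E\E^* u\rangle = \|\E^*u\|^2 = \|\A^*y+\B^*z\|^2$, I would first expand
\[
\langle u, \M u\rangle = \|y\|^2_{\S+\Sigma_{\vartheta_g}} + \|z\|^2_{\T+\Sigma_{\varphi_h}+\sigma \B\B^*} + (\tau\sigma)^{-1}\|x\|^2 + s_\tau\sigma\|\A^*y+\B^*z\|^2,
\]
and similarly for $\langle u, \H u\rangle$ with $\tfrac12\Sigma_{\vartheta_g}$, $\tfrac12\Sigma_{\varphi_h}$, $\tau\sigma\B\B^*$, $4t_\tau(\tau^2\sigma)^{-1}$ and $t_\tau\sigma$ in place of the corresponding scalars. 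By \eqref{eq:t-stau}, $s_\tau\ge 1/4$ and $t_\tau>0$ throughout the range $\tau\in(0,(1+\sqrt{5})/2)$, so every scalar appearing is strictly positive.

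Since the $x$-block coefficients $(\tau\sigma)^{-1}$ and $4t_\tau(\tau^2\sigma)^{-1}$ are strictly positive, $\M\succ 0$ (resp.\ $\H\succ 0$) is equivalent to the corresponding quadratic form being strictly positive on the slice $x=0$, $(y,z)\neq 0$. The useful auxiliary fact, which I would record next, is a coefficient-independence lemma: for any self-adjoint positive semi-definite operator $\Sigma$ and any linear operator $Q$, the condition $\Sigma+\alpha QQ^*\succ 0$ is equivalent to ${\rm ker}\,\Sigma\cap{\rm ker}\,Q^*=\{0\}$, and hence does not depend on the particular scalar $\alpha>0$ chosen.

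With this in hand, I would finish the equivalences by a short case analysis. For $(1)\Rightarrow (\M\succ 0)$, take $x=0$ and $(y,z)\neq 0$: if $z\neq 0$ then $\|z\|^2_{\T+\Sigma_{\varphi_h}+\sigma\B\B^*}>0$ directly from the second half of $(1)$; if $z=0$ and $y\neq 0$ then, by the first half of $(1)$, either $\|y\|^2_{\S+\Sigma_{\vartheta_g}}>0$ or $\A^*y\neq 0$, and in either alternative $\|y\|^2_{\S+\Sigma_{\vartheta_g}}+s_\tau\sigma\|\A^*y\|^2>0$ (this is where $s_\tau>0$ matters). For the converse, testing the quadratic form at $(y,0,0)$ and $(0,z,0)$ yields $\S+\Sigma_{\vartheta_g}+s_\tau\sigma\A\A^*\succ 0$ and $\T+\Sigma_{\varphi_h}+(1+s_\tau)\sigma\B\B^*\succ 0$, and the coefficient-independence lemma upgrades these at once to the coefficient $\sigma$ appearing in $(1)$. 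The equivalence $(1)\Leftrightarrow(\H\succ 0)$ is obtained by the same argument with $s_\tau$ replaced by $t_\tau$ and the corresponding shifts in the $\Sigma_{\vartheta_g}$, $\Sigma_{\varphi_h}$ and $\B\B^*$ coefficients. No step here is genuinely hard; the only point requiring mild care is the kernel/coefficient-independence observation, which is what allows the different constants multiplying $\A\A^*$ and $\B\B^*$ across the three formulations to be reconciled.
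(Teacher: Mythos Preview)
Your proof is correct and takes essentially the same approach as the paper's: both test the quadratic form on the special vectors $(y,0,0)$ and $(0,z,0)$ and rely on the observation that, for positive semi-definite summands, positive definiteness of a positively-weighted sum is a kernel condition independent of the particular positive weights. The only cosmetic difference is that the paper dismisses $\M\succ 0\Leftrightarrow\H\succ 0$ as obvious from \eqref{eq:t-stau} and proves only the equivalence with $\M$, whereas you treat $\M$ and $\H$ in parallel and make the coefficient-independence step explicit --- which is in fact precisely what justifies both the paper's ``obvious'' step and its passage from $\S+\Sigma_{\vartheta_g}+s_\tau\sigma\A\A^*\succ 0$ and $\T+\Sigma_{\varphi_h}+(1+s_\tau)\sigma\B\B^*\succ 0$ back to the coefficient $\sigma$ in condition~(1).
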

\begin{proof} Since, in view of  $(\ref{eq:t-stau})$, it is obvious that $\M\succ 0 \Longleftrightarrow   \H\succ 0$, we only need to show that
$$
  \Sigma_{\vartheta_g} +\S+ \sigma  \A\A^*\succ 0 \ \ \& \ \ \Sigma_{\varphi_h} +\T+ \sigma \B\B^*\succ 0\Longleftrightarrow
  \M\succ 0.
$$
First, we show that $\Sigma_{\vartheta_g} +\S+ \sigma  \A\A^*\succ 0 \ \& \ \Sigma_{\varphi_h} +\T+ \sigma \B\B^*\succ 0\Longrightarrow
  \M\succ 0.$  Suppose that $\Sigma_{\vartheta_g} +\S+ \sigma  \A\A^*\succ 0 \ \& \ \Sigma_{\varphi_h} +\T+ \sigma \B\B^*\succ 0$, but
  there exists a vector $0\ne d:=(d_y, d_z,d_x) \in \Y\times \Z\times \X$ such that $\langle d, \M d\rangle =0$. By using the definition of $\M$ and (\ref{eq:t-stau}), we have
  $$
  d_x=0, \ (\Sigma_{\varphi_h} +\T+ \sigma \B\B^*)d_z=0, \ (\Sigma_{\vartheta_g} +\S)y=0 \ \& \ \E^*(d_y,d_z,0)=0,
  $$
   which, together with the assumption that   $\Sigma_{\vartheta_g} +\S+ \sigma  \A\A^*\succ 0 \ \& \ \Sigma_{\varphi_h} +\T+ \sigma \B\B^*\succ 0$, imply $d=0$. This contradiction shows that $\M\succ 0$.

   Next, suppose that $\M\succ 0$. Since $s_\tau>0$ and for any $d=(0,d_z,0) \in \Y\times \Z\times \X$, $\langle d, \M d \rangle =\left \langle d_z,
  ( \Sigma_{\varphi_h} +\T+ (1 +s_\tau )\sigma\B\B^* )d_z\right \rangle $, we know that $\Sigma_{\varphi_h} +\T+  \sigma\B\B^*\succ 0$. Similarly,
  since for any $d=(d_y,0,0) \in \Y\times \Z\times \X$, $\langle d, \M d \rangle =\left \langle d_y,
  ( \Sigma_{\varphi_g} +\S+   s_\tau  \sigma\A\A^* )d_y\right \rangle $, we know that  $\Sigma_{\vartheta_g} +\S+ \sigma  \A\A^*\succ 0$. So the proof is completed.
\end{proof}

\begin{proposition}\label{Han-T-11}
Let $\tau \in (0,(1+\sqrt{5})/2)$ and $\{(y^k,z^k,x^k)\}$ be an infinite sequence generated by the sPADMM.  Then for any $\bar u=(\bar y,\bar z,\bar x) \in \overline{\Omega}$ and any $k\ge 1$,
\[\label{les11}
\|u^{k+1}-\bar u\|^2_{\M}+\|z^{k+1}-z^k\|^2_\T\leq  \left( \|u^{k}-\bar u\|^2_{\M}+\|z^{k}-z^{k-1}\|^2_\T \right)-\|u^{k+1}-u^k\|^2_{\H}.
\]
Consequently, we have   for all $k\ge 1$,
\[\label{les11-dist}
{\rm dist}^2_{\M} (u^{k+1},{\overline \Omega} )+\|z^{k+1}-z^k\|^2_\T\leq
 \left( {\rm dist}^2_{\M} (u^{k},{\overline \Omega} )+\|z^{k}-z^{k-1}\|^2_\T\right)-  \|u^{k+1}-u^k\|^2_{\H}.
\]
\end{proposition}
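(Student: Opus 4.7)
The plan is to derive (\ref{les11}) from the one-step decrease inequality (\ref{eq:B.8}) of Theorem \ref{Th:B1}(i) by an algebraic rearrangement, and to obtain (\ref{les11-dist}) from (\ref{les11}) by a standard projection argument. Set $a_\tau := 1-\min\{\tau,\tau^{-1}\}$, $b_\tau := 1-\tau+\min\{\tau,\tau^{-1}\}$ and $p_j := \A^*y^j+\B^*z^j-c$. The arithmetic identities $s_\tau = a_\tau + b_\tau/4$ and $4t_\tau = b_\tau/2$, both verified by direct computation, are the bookkeeping devices that organize the whole derivation: the first splits the coefficient of $\|p\|^2$ in $\|u-\bar u\|_\M^2$ into the $a_\tau$ piece already present on both sides of (\ref{eq:B.8}) and a $b_\tau/4$ correction to be supplied; the second, together with the update $x^{k+1}-x^k = \tau\sigma p_{k+1}$, identifies the $x$-block of $\H$ with $(b_\tau/2)\sigma\|p_{k+1}\|^2$.

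The first step adds $\|y^{k+1}-\bar y\|^2_{\Sigma_{\vartheta_g}} + \|z^{k+1}-\bar z\|^2_{\Sigma_{\varphi_h}} + (b_\tau/4)\sigma\|p_{k+1}\|^2$ to both sides of (\ref{eq:B.8}), reorganizes with the analogous shift at index $k$ absorbed into $\|u^k-\bar u\|_\M^2$, and expands $\nu_{k+1}$ via (\ref{eq:B.7}). This converts (\ref{eq:B.8}) into
\begin{equation*}
\|u^{k+1}-\bar u\|^2_\M + \|z^{k+1}-z^k\|^2_\T \leq \|u^k-\bar u\|^2_\M + \|z^k-z^{k-1}\|^2_\T - R_{k+1},
\end{equation*}
where the residual is
\begin{align*}
R_{k+1} :={}& \|y^{k+1}-y^k\|^2_\S + \|z^{k+1}-z^k\|^2_\T + \tau b_\tau\sigma\|\B^*(z^{k+1}-z^k)\|^2 \\
& + \tfrac{b_\tau}{4}\sigma\|p_k\|^2 + \tfrac{3b_\tau}{4}\sigma\|p_{k+1}\|^2 \\
& + \|y^{k+1}-\bar y\|^2_{\Sigma_{\vartheta_g}}+\|y^k-\bar y\|^2_{\Sigma_{\vartheta_g}} + \|z^{k+1}-\bar z\|^2_{\Sigma_{\varphi_h}}+\|z^k-\bar z\|^2_{\Sigma_{\varphi_h}}.
\end{align*}
The target inequality (\ref{les11}) is thus reduced to proving the pointwise estimate $R_{k+1} \geq \|u^{k+1}-u^k\|^2_\H$.

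To establish this pointwise estimate, I will match terms group by group. The $\S$- and $\T$-weighted squares of $y^{k+1}-y^k, z^{k+1}-z^k$ in $R_{k+1}$ match the corresponding diagonal blocks of $\H$ directly. The parallelogram identity $\|y^{k+1}-\bar y\|^2_{\Sigma_{\vartheta_g}}+\|y^k-\bar y\|^2_{\Sigma_{\vartheta_g}} = \tfrac12\|y^{k+1}-y^k\|^2_{\Sigma_{\vartheta_g}} + \tfrac12\|y^{k+1}+y^k-2\bar y\|^2_{\Sigma_{\vartheta_g}}$ and its $z$-counterpart dominate the $\tfrac12\Sigma_{\vartheta_g}$ and $\tfrac12\Sigma_{\varphi_h}$ diagonal pieces of $\H$ with nonnegative slack. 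For the feasibility residuals, the identity $\tfrac{b_\tau}{4}\sigma(\|p_{k+1}\|^2+\|p_k\|^2) = \tfrac{b_\tau}{8}\sigma\|p_{k+1}+p_k\|^2 + t_\tau\sigma\|\E^*(u^{k+1}-u^k)\|^2$ reproduces the $t_\tau\sigma\E\E^*$ block of $\H$ exactly (since $p_{k+1}-p_k = \E^*(u^{k+1}-u^k)$), while the leftover $(3b_\tau/4)\sigma\|p_{k+1}\|^2 - (b_\tau/4)\sigma\|p_{k+1}\|^2 = (b_\tau/2)\sigma\|p_{k+1}\|^2$ covers the $x$-block. The delicate step is the $\B\B^*$ piece: $R_{k+1}$ contributes $\tau b_\tau\sigma\|\B^*(z^{k+1}-z^k)\|^2$ against $\tau\sigma\|\B^*(z^{k+1}-z^k)\|^2$ in $\H$. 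For $\tau\le 1$ these coincide since $b_\tau = 1$; for $\tau \in (1,(1+\sqrt{5})/2)$ one has $\tau b_\tau = 1+\tau-\tau^2 < \tau$, and the resulting shortfall must be absorbed by the positive slacks $\tfrac{b_\tau}{8}\sigma\|p_{k+1}+p_k\|^2$ and $\tfrac12\|y^{k+1}+y^k-2\bar y\|^2_{\Sigma_{\vartheta_g}}, \tfrac12\|z^{k+1}+z^k-2\bar z\|^2_{\Sigma_{\varphi_h}}$, or by invoking the $z$-subproblem optimality to tie $\|\B^*(z^{k+1}-z^k)\|$ to the other quantities. Balancing this trade-off uniformly in $\tau \in (0,(1+\sqrt{5})/2)$ is the main obstacle I expect.

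For (\ref{les11-dist}), take any sequence $\bar u^{(n)} \in \overline{\Omega}$ with $\|u^k-\bar u^{(n)}\|_\M \to \dist_\M(u^k,\overline{\Omega})$, apply (\ref{les11}) with $\bar u = \bar u^{(n)}$, and pass to the limit using $\dist^2_\M(u^{k+1},\overline{\Omega}) \le \|u^{k+1}-\bar u^{(n)}\|^2_\M$ on the left-hand side.
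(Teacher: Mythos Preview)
Your approach is precisely the paper's approach: the paper too starts from (\ref{eq:B.8}), adds
$\|y^{k+1}-\bar y\|^2_{\Sigma_{\vartheta_g}} + \|z^{k+1}-\bar z\|^2_{\Sigma_{\varphi_h}} + \tfrac{b_\tau}{4}\sigma\|p_{k+1}\|^2$
to both sides to pass from the $a_\tau$-coefficient to $s_\tau = a_\tau + b_\tau/4$, arrives at exactly your residual $R_{k+1}$, and then bounds it below via the parallelogram estimates
$\|y^{k+1}-\bar y\|^2_{\Sigma_{\vartheta_g}}+\|y^k-\bar y\|^2_{\Sigma_{\vartheta_g}}\ge \tfrac12\|y^{k+1}-y^k\|^2_{\Sigma_{\vartheta_g}}$,
$\|p_{k+1}\|^2+\|p_k\|^2\ge \tfrac12\|\E^*(u^{k+1}-u^k)\|^2$, together with $p_{k+1}=(\tau\sigma)^{-1}(x^{k+1}-x^k)$. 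Your derivation of (\ref{les11-dist}) from (\ref{les11}) is also the paper's.

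The discrepancy you flag in the $\B\B^*$ coefficient is real, and you have been more careful than the paper here. The quantity $\tau-\tau^2+\tau\min\{\tau,\tau^{-1}\}$ carried over from $\delta_{k+1}$ in (\ref{eq:B.7}) equals $\tau b_\tau$, not $\tau$; the paper nevertheless writes $\sigma\tau\|\B^*(z^{k+1}-z^k)\|^2$ in the reorganized decrease term and matches it against the $\tau\sigma\B\B^*$ block of $\H$. For $\tau\le 1$ one has $b_\tau=1$ and there is no issue; for $\tau\in(1,(1+\sqrt5)/2)$ one has $\tau b_\tau=1+\tau-\tau^2<\tau$, and the paper offers no mechanism to cover the shortfall $(\tau^2-1)\sigma\|\B^*(z^{k+1}-z^k)\|^2$. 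Your proposed remedies (absorbing it into the parallelogram slacks or into subproblem optimality) will not produce a uniform bound of the required form, since those slacks depend on $\bar u$ and on $p_{k+1}+p_k$ rather than on $\B^*(z^{k+1}-z^k)$. The clean resolution is not to fight for the stated $\H$ but to observe that everything downstream (the relation (\ref{importin1}), Proposition \ref{lem:pd}, and Theorem \ref{th-1-rate}) remains valid verbatim if the $z$-block of $\H$ carries $\tau b_\tau\sigma\B\B^*$ instead of $\tau\sigma\B\B^*$; with that correction your $R_{k+1}\ge\|u^{k+1}-u^k\|^2_{\H}$ is immediate and the proof is complete.
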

\begin{proof} Let $\bar u=(\bar y,\bar z,\bar x) \in \overline{\Omega}$ be fixed but arbitrarily chosen.
From part  (i)   of Theorem \ref{Th:B1}, we have  for
  $k\ge 1$ that
\[\label{111a}
\begin{array}{l}
(\tau\sigma )^{-1}\|x^{k+1}-{\bar x}\|^2+\|y^{k+1}-{\bar y}\|^2_\S+\|z^{k+1}-{\bar z}\|^2_\T+\sigma \|\B^*(z^{k+1}-{\bar z})\|^2\\
\quad \quad +\|z^{k+1}-z^k\|^2_\T+\left(1-\min\{\tau,\tau^{-1}\}\right)\sigma \|\E^*(y^{k+1},z^{k+1},0)-c\|^2\\
\leq (\tau\sigma )^{-1}\|x^k-{\bar x}\|^2+\|y^k-{\bar y}\|^2_\S+\|z^k-{\bar z}\|^2_\T+\sigma \|\B^*(z^k-{\bar z})\|^2\\
\quad \quad +\|z^k-z^{k-1}\|^2_\T+\left(1-\min\{\tau,\tau^{-1}\}\right)\sigma \|\E^*(y^{k},z^k,0)-c\|^2\\
-\left\{\sigma [\tau-\tau^2+\tau \min\{\tau,\tau^{-1}\}] \|\B^*(z^{k+1}-z^k)\|^2\right.\\
+\|z^{k+1}-z^k\|^2_\T+\|y^{k+1}-y^k\|^2_\S+2\|y^{k+1}-{\bar y}\|^2_{\Sigma_{\vartheta_g}}+2\|z^{k+1}-{\bar z}\|^2_{\Sigma_{\varphi_h}}\\
\left.+(1-\tau+\min \{\tau,\tau^{-1}\})\sigma  \|\E^*(y^{k+1},z^{k+1},0)-c\|^2\right\}.
\end{array}
\]
By reorganizing the terms in (\ref{111a}), we obtain
$$\begin{array}{l}
(\tau\sigma )^{-1}\|x^{k+1}-{\bar x}\|^2+\|y^{k+1}-{\bar y}\|^2_\S+\|z^{k+1}-{\bar z}\|^2_\T+\sigma  \|\B^*(z^{k+1}-{\bar z},0)\|^2\\
\quad \quad
+\|z^{k+1}-z^k\|^2_\T+\frac{1}{4}\left(5-\tau-3\min \{\tau,\tau^{-1}\}\right)\sigma \|\E^*(y^{k+1},z^{k+1},0)-c\|^2\\
\quad \quad
+\|y^{k+1}-{\bar y}\|^2_{\Sigma_\vartheta}
+\|z^{k+1}-{\bar z}\|^2_{\Sigma_\varphi}\\
\leq (\tau\sigma )^{-1}\|x^k-{\bar x}\|^2+\|y^k-{\bar y}\|^2_\S+\|z^k-{\bar z}\|^2_\T+\sigma \|\B^*(z^k-{\bar z})\|^2\\
\quad \quad +\|z^k-z^{k-1}\|^2_\T+\frac{1}{4}\left(5-\tau-3\min \{\tau,\tau^{-1}\}\right)\sigma  \|\E^*(y^{k},z^k,0)-c\|^2\\
\quad \quad +\|y^{k}-{\bar y}\|^2_{\Sigma_\vartheta}
+\|z^{k}-\bar z\|^2_{\Sigma_\varphi}\\
\,-\left\{ \sigma  \tau \|\B^*(z^{k+1}-z^k)\|^2+\|z^{k+1}-z^k\|^2_\T+\|y^{k+1}-y^k\|^2_\S\right.\\
\quad \quad +\|y^{k+1}-{\bar y}\|^2_{\Sigma_\vartheta}+\|y^{k}-{\bar y}\|^2_{\Sigma_\vartheta}
+\|z^{k+1}-{\bar z}\|^2_{\Sigma_\varphi}+\|z^{k}-{\bar z}\|^2_{\Sigma_\varphi}\\
\quad +\frac{1}{2}(1-\tau+\min \{\tau,\tau^{-1}\})\sigma  \|\E^*(y^{k+1},z^{k+1},0)-c\|^2\\
\quad \left.+\frac{1}{4}(1-\tau+\min \{\tau,\tau^{-1}\})\sigma [ \|\E^*(y^{k+1},z^{k+1},0)-c\|^2+
 \|\E^*(y^{k},z^{k},0)-c\|^2]\right\}
\end{array}
$$
or equivalently
\[\label{112aa}
\begin{array}{l}
(\tau\sigma )^{-1}\|x^{k+1}-\bar x\|^2+\|y^{k+1}-\bar y\|^2_\S+\|z^{k+1}-\bar z\|^2_\T+\sigma  \|\B^*(z^{k+1}-\bar {z})\|^2\\
\quad \quad
+\|z^{k+1}-z^k\|^2_\T+s_\tau\sigma \|\E^*(y^{k+1},z^{k+1},0)-c\|^2\\
\quad \quad
+\|y^{k+1}-\bar y\|^2_{\Sigma_\vartheta}
+\|z^{k+1}-\bar z\|^2_{\Sigma_\varphi}\\
\leq (\tau\sigma )^{-1}\|x^k-\bar x\|^2+\|y^k-\bar y\|^2_\S+\|z^k-\bar z\|^2_\T+\sigma \|\B^*(z^k-{\bar z})\|^2\\
\quad \quad +\|z^k-z^{k-1}\|^2_\T+s_\tau\sigma  \|\E^*(y^{k},z^k,0)-c\|^2+\|y^{k}-{\bar y}\|^2_{\Sigma_\vartheta}
+\|z^{k}-{\bar z}\|^2_{\Sigma_\varphi}\\
-\left\{ \sigma \tau \|\B^*(z^{k+1}-z^k)\|^2+\|z^{k+1}-z^k\|^2_\T+\|y^{k+1}-y^k\|^2_\S \right.\\
\quad \quad +\|y^{k+1}-{\bar y}\|^2_{\Sigma_\vartheta}+\|y^{k}-{\bar y}\|^2_{\Sigma_\vartheta}
+\|z^{k+1}-{\bar z}\|^2_{\Sigma_\varphi}+\|z^{k}-{\bar z}\|^2_{\Sigma_\varphi}\\
\quad + \frac{1}{2}\left(1-\tau+\min \{\tau,\tau^{-1}\}\right)\sigma \|\E^*(y^{k+1},z^{k+1},0)-c\|^2\\
\quad +  \frac{1}{4}(1-\tau+\min \{\tau,\tau^{-1}\})\sigma [ \|\E^*(y^{k+1},z^{k+1},0)-c\|^2+
\left. \|\E^*(y^{k},z^{k},0)-c\|^2]\right\}.
\end{array}
\]
Using equalities
$$\begin{array}{l}
\E^*(y^{k+1},z^{k+1},0)-c=\A^*(y^{k+1}-\bar y)+\B^*(z^{k+1}-\bar  z),\\
\E^*(y^k,z^k,0)-c=\A^*(y^k-\bar y)+\B^*(z^k-\bar z),\\
\E^*(y^{k+1},z^{k+1},0)-c=(\tau\sigma )^{-1}(x^{k+1}-x^{k})
\end{array}
$$
 and inequalities
 $$\begin{array}{l}
 \|y^{k+1}-\bar  y\|^2_{\Sigma_{\vartheta_g}}+\|y^{k}-\bar y\|^2_{\Sigma_{\vartheta_g}}\geq  \frac{1}{2}\|y^{k+1}-y^k\|^2_{\Sigma_{\vartheta_g}},\\
\|z^{k+1}-\bar  z\|^2_{\Sigma_{\varphi_h}}+\|z^{k}-\bar  z\|^2_{\Sigma_{\varphi_h}}\geq  \frac{1}{2}\|z^{k+1}-z^k\|^2_{\Sigma_{\varphi_h}},\\
 \|\E^*(y^{k+1},z^{k+1},0)-c\|^2+
\|\E^*(y^{k},z^{k},0)-c\|^2\geq  \frac{1}{2}  \|\A^*(y^{k+1}-y^k)+\B^*(z^{k+1}-z^k)\|^2,
 \end{array}
 $$
 we obtain from (\ref{112aa}) and the definitions of  $s_\tau$ and $t_\tau$ that
$$\begin{array}[b]{l}
(\tau\sigma )^{-1}\|x^{k+1}-{\bar x}\|^2+\|y^{k+1}-\bar y\|^2_\S+\|z^{k+1}-{\bar z}\|^2_\T+\sigma \|\B^*(z^{k+1}-{\bar z})\|^2\\
\quad \quad
+\|z^{k+1}-z^k\|^2_\T+s_\tau\sigma  \|\A^*(y^{k+1}-\bar  y)+\B^*(z^{k+1}-\bar z)\|^2\\
\quad \quad
+\|y^{k+1}-\bar y\|^2_{\Sigma_{\vartheta_g}}
+\|z^{k+1}-\bar z\|^2_{\Sigma_{\varphi_h}}\\
\leq (\tau\sigma )^{-1}\|x^k-\bar x\|^2+\|y^k-\bar y\|^2_\S+\|z^k-\bar z\|^2_\T+\sigma \|\B^*(z^k-\bar z)\|^2\\
\quad \quad +\|z^k-z^{k-1}\|^2_\T+s_\tau\sigma  \|\A^*(y^{k}-\bar y)+\B^*(z^k-\bar z)\|^2+\|y^{k}-\bar y\|^2_{\Sigma_{\vartheta_g}}
+\|z^{k}-\bar z\|^2_{\Sigma_{\varphi_h}}\\
-\big\{ \sigma \tau \|\B^*(z^{k+1}-z^k)\|^2+\|z^{k+1}-z^k\|^2_\T+\|y^{k+1}-y^k\|^2_\S
 + \frac{1}{2}\|y^{k+1}-y^{k}\|^2_{\Sigma_{\vartheta_g}}\\
\quad + \frac{1}{2}\|z^{k+1}-z^{k}\|^2_{\Sigma_{\varphi_h}}+4t_\tau(\tau^2\sigma)^{-1}\|x^{k+1}-x^k\|^2\\
\quad +t_\tau\sigma \|\A^*(y^{k+1}-y^k)+\B^*(z^{k+1}-z^k)\|^2\big\},
\end{array}
$$
which shows that  (\ref{les11}) holds.  By noting that $\overline{\Omega}$ is a nonempty closed convex set and (\ref{les11}) holds for
any $\bar{u} \in \overline{\Omega}$, we immediately get (\ref{les11-dist}).
\end{proof}

For establishing the linear  rate of convergence of the sPADMM, we need the following error bound condition.

\begin{ass}[{Error bound condition}]\label{erB2} For any given $\bar{u} \in \overline{\Omega}$,
there exist positive constants $\delta  $  and  $\eta> 0$ such that
\begin{eqnarray}\label{oi1a}
{\rm dist}(u,\overline{\Omega})\leq\eta\|R(u)\|,\quad\forall\, u\in \{ u\in \U: \, \|u-\bar{u}\|\le \delta\}.
\end{eqnarray}
\end{ass}

\begin{theorem}\label{th-1-rate}
Let $\tau \in (0,(1+\sqrt{5})/2)$.  Suppose that Assumptions \ref{ass.1} and \ref{erB2} hold. Assume also that both $ \Sigma_{\vartheta_g} +\S+ \sigma  \A\A^*$ and $ \Sigma_{\varphi_h} +\T+ \sigma \B\B^*$ are positive definite. Let $\{(y^k,z^k,x^k)\}$ be the infinite sequence generated from the sPADMM. Then for all $k$ sufficiently large,
\[\label{rate-tuales11}
{\rm dist}^2_{\M} (u^{k+1},{\overline \Omega} )+\|z^{k+1}-z^k\|^2_\T \leq \mu \left[{\rm dist}^2_{\M} (u^{k},{\overline \Omega})+\|z^{k}-z^{k-1}\|^2_\T\right],
\]
where
$$\mu:=\displaystyle (1+2 \kappa_4)^{-1}(1+\kappa_4)<1 \quad \&\quad   \kappa_4:=\displaystyle \min \{\tau, 4t_\tau\}\left(\eta^2\kappa \lambda_{\max}(\M)\right)^{-1}>0.
$$
Moreover, there exists a positive number  $\varsigma \in [\mu, 1)$ such that  for all $k\ge 1$,
\[\label{rate-tuales11-a}
{\rm dist}^2_{\M} (u^{k+1},{\overline \Omega} )+\|z^{k+1}-z^k\|^2_\T \leq \varsigma \left[{\rm dist}^2_{\M} (u^{k},{\overline \Omega})+\|z^{k}-z^{k-1}\|^2_\T\right].
\]
 \end{theorem}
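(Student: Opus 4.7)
The plan is to chain together the descent inequality of Proposition \ref{Han-T-11} with the error bound assumption, the KKT residual estimate of Lemma \ref{jiu}, and the operator domination (\ref{importin1}) relating $\H$ to $\H_0$. Write $d_k := {\rm dist}^2_{\M}(u^k,\overline\Omega)$, $z_k := \|z^k-z^{k-1}\|^2_{\T}$, and $a_k := d_k + z_k$. Proposition \ref{Han-T-11} gives the one-step decrease
\[
a_{k+1} + \|u^{k+1}-u^k\|^2_{\H} \le a_k,
\]
so everything reduces to bounding $\|u^{k+1}-u^k\|^2_{\H}$ from below by a multiple of $a_{k+1}$.

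For the first lower bound, I would invoke Proposition \ref{lem:pd} together with Theorem \ref{Th:B1}(ii) to conclude that $\{u^k\}$ converges to some $\bar u\in\overline\Omega$. Hence for all $k$ past some threshold $K$, the iterate $u^{k+1}$ lies in the neighborhood of $\bar u$ on which Assumption \ref{erB2} holds, giving ${\rm dist}(u^{k+1},\overline\Omega)\le\eta\|R(u^{k+1})\|$. Passing to the $\M$-norm costs a factor of $\lambda_{\max}(\M)$, Lemma \ref{jiu} replaces $\|R(u^{k+1})\|^2$ by $\|u^{k+1}-u^k\|^2_{\H_0}$, and (\ref{importin1}) dominates $\H_0$ by $(\kappa/\min\{\tau,4t_\tau\})\H$. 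Chaining these yields $d_{k+1} \le \kappa_4^{-1}\|u^{k+1}-u^k\|^2_{\H}$, i.e.\ $\|u^{k+1}-u^k\|^2_{\H} \ge \kappa_4\, d_{k+1}$.

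For the second lower bound, I would read off from the block structure of $\H$ that its $z$-diagonal is $\T+\tfrac12\Sigma_{\varphi_h}+\tau\sigma\B\B^*\succeq \T$, so $\|u^{k+1}-u^k\|^2_{\H}\ge z_{k+1}$. Combining the two lower bounds gives $(1+\kappa_4)\|u^{k+1}-u^k\|^2_{\H} \ge \kappa_4(d_{k+1}+z_{k+1}) = \kappa_4\,a_{k+1}$, and substitution into the descent inequality produces $(1+\kappa_4)a_k \ge (1+2\kappa_4)a_{k+1}$, which is precisely (\ref{rate-tuales11}) with $\mu = (1+\kappa_4)/(1+2\kappa_4)$.

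To extend to (\ref{rate-tuales11-a}) for all $k\ge 1$, I would use monotonicity $a_{k+1}\le a_k$ (again from Proposition \ref{Han-T-11}) together with convergence $a_k\to 0$. For indices $1\le k<K$ with $a_k>0$, I claim $a_{k+1}/a_k<1$ strictly: indeed, $a_{k+1}=a_k$ would force $\|u^{k+1}-u^k\|^2_{\H}=0$, and since Proposition \ref{lem:pd} gives $\H\succ 0$ this means $u^{k+1}=u^k$; the sPADMM recursions then show the iterates freeze at a KKT point, so $d_k=0$ and $z_{k+1}=0$, whence $a_{k+1}=0\ne a_k$, a contradiction. Taking $\varsigma := \max\{\mu,\, \max_{1\le k<K, a_k>0} a_{k+1}/a_k\}\in[\mu,1)$ handles the finitely many initial indices, while for $k\ge K$ the contraction from the main argument applies. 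The main obstacle I anticipate is the bookkeeping in the combined lower bound — in particular, recognizing that the same quantity $\|u^{k+1}-u^k\|^2_{\H}$ must simultaneously control $d_{k+1}$ (via the error bound chain) and $z_{k+1}$ (via the block structure of $\H$) to produce the sharp constant $\mu$.
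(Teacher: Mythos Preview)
Your proposal is correct and follows essentially the same route as the paper. Both arguments use convergence of $\{u^k\}$ (via Theorem \ref{Th:B1}(ii) and Proposition \ref{lem:pd}) to activate Assumption \ref{erB2}, chain it with Lemma \ref{jiu} and the domination (\ref{importin1}) to get $\|u^{k+1}-u^k\|^2_\H\ge\kappa_4\,d_{k+1}$, and combine this with $\|u^{k+1}-u^k\|^2_\H\ge z_{k+1}$ inside the descent inequality (\ref{les11-dist}). The only cosmetic difference is the packaging of that combination: the paper introduces $\kappa_5:=(1+\kappa_4)^{-1}$ and splits $\|u^{k+1}-u^k\|^2_\H$ as a convex combination $(1-\kappa_5)+\kappa_5$ before applying the two lower bounds separately, whereas you add $1\cdot(\text{bound 1})+\kappa_4\cdot(\text{bound 2})$; the resulting contraction factor $\mu$ is identical. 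For the extension to all $k\ge 1$, the paper gives a one-line pointer to the same ingredients, while you spell out the finite-index argument explicitly; your use of $\H\succ 0$ plus Lemma \ref{jiu} to rule out $a_{k+1}=a_k>0$ is a clean way to justify what the paper leaves implicit.
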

\begin{proof}
From Theorem \ref{Th:B1} we know that   the whole sequence $\{(y^k,z^k,x^k)\}$ generated by the sPADMM  converges to     a KKT point in $\overline{\Omega}$, say $\bar {u} =(\bar{y}, \bar{z}, {\bar x})$.
Combining  Assumption \ref{erB2} with Lemma \ref{jiu} we know that there exists a constant  $\eta>0$  that for all $k$ sufficiently large,
\begin{eqnarray}\label{D1}
{\rm dist}^2(u^{k+1},\overline{\Omega})\leq\eta^2\|R(u^{k+1})\|^2\le \eta^2\|u^k-u^{k+1}\|_{  \H_0}^2.
\end{eqnarray}

From the definition of $\H$, we have  for all $k\ge 0$,
$$\|z^{k+1}-z^k\|_\T^2\leq \|u^{k+1}-u^k\|^2_\H.
$$
It follows from (\ref{importin1}) and   (\ref{D1})  that for all $k$ sufficiently large,  
\[\label{eq:1021}
\begin{array}{ll}
\|u^{k+1}-u^k\|^2_{\H} & \geq \min \{\tau, 4t_\tau\}\kappa^{-1}\|u^{k+1}-u^k\|^2_{  \H_0}\\[4pt]
& \geq \displaystyle \min \{\tau, 4t_\tau\}\kappa^{-1}\eta^{-2} {\rm dist}^2(u^{k+1},{\overline \Omega})\geq  \kappa _4{\rm dist}_{\M}^2(u^{k+1},{\overline \Omega}).
\end{array}
\]
Let $\kappa_5=(1+ \kappa_4)^{-1}$. From (\ref{les11-dist}) in Proposition \ref{Han-T-11} and (\ref{eq:1021}), we have for all $k$ sufficiently large that
\[\label{eq:1031}
\begin{array}{l}
{\rm dist}^2_{\M} (u^{k+1},{\overline \Omega} )+\|z^{k+1}-z^k\|^2_\T-\left \{ {\rm dist}^2_{\M} (u^{k},{\overline \Omega})+\|z^{k}-z^{k-1}\|^2_\T\right\}\\
\leq -\left ((1-\kappa_5)\|u^{k+1}-u^k\|^2_{\H}+ \kappa_5 \|u^{k+1}-u^k\|^2_{\H}\right)\\
\leq -\left((1- \kappa_5) \|z^{k+1}-z^k\|^2_\T+{\kappa_5}  {\kappa_4} {\rm dist}_{\M}^2(u^{k+1},{\overline \Omega})\right).
\end{array}
\]
 Then we obtain from
(\ref{eq:1031}) that for all $k$ sufficiently large,
$$
(1+{\kappa_5}{ \kappa_4}) {\rm dist}_{\M}^2(u^{k+1},{\overline \Omega})+(2-{\kappa_5})\|z^{k+1}-z^k\|^2_\T\leq {\rm dist}^2_{\M} (u^{k},{\overline \Omega})+\|z^{k}-z^{k-1}\|^2_\T.
$$
By noting that $1+{\kappa_5}  \kappa_4=2-{\kappa_5}=\mu^{-1}$, we obtain the estimate (\ref{rate-tuales11}).

By combining (\ref{rate-tuales11}) with   Lemma \ref{jiu},  (\ref{importin1}) and (\ref{les11-dist}) in Proposition  \ref{Han-T-11}, we can obtain directly that there exists  a positive number  $\varsigma \in [\mu, 1)$ such that  (\ref{rate-tuales11-a}) holds for all $k\ge 1$. The proof is completed.
\end{proof}

Theorem \ref{th-1-rate} provides a very general result on the  linear rate of convergence for the sPADMM under a fairly mild error bound assumption, which holds automatically if $R^{-1}$ is  piecewise polyhedral.  Since $R^{-1}$ is  piecewise polyhedral if and only if $R$ itself  is piecewise polyhedral, we obtain the following directly from Theorem  \ref{th-1-rate}, Proposition \ref{prop:piecewisePoly} and Lemma \ref{jiu}.

\begin{corollary}\label{cor-1-rate}
Let $\tau \in (0,(1+\sqrt{5})/2)$.  Suppose that $\overline{\Omega} \ne \emptyset$    and that  both $ \Sigma_{\vartheta_g} +\S+ \sigma  \A\A^*$ and $ \Sigma_{\varphi_h} +\T+ \sigma \B\B^*$ are positive definite. Assume that  the mapping  $R: {\cal U} \to {\cal U}$ is piecewise polyhedral. Then  there exists a constant $\varsigma \in (0,1)$ such that  the infinite sequence  $\{(y^k,z^k,x^k)\}$    generated from the sPADMM satisfies
\[\label{rate-tuales11-poly}
{\rm dist}^2_{\M} (u^{k+1},{\overline \Omega} )+\|z^{k+1}-z^k\|^2_\T \leq \varsigma \left[{\rm dist}^2_{\M} (u^{k},{\overline \Omega})+\|z^{k}-z^{k-1}\|^2_\T\right], \quad \forall\, k\ge 1.
\]
\end{corollary}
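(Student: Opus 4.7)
The plan is to reduce the corollary to Theorem \ref{th-1-rate} by verifying that the error bound condition in Assumption \ref{erB2} is automatic whenever the residual mapping $R$ is piecewise polyhedral. Since Theorem \ref{th-1-rate} already gives the $Q$-linear decay $(\ref{rate-tuales11-a})$ under Assumption \ref{erB2}, the only genuine work is to establish the error bound from the piecewise polyhedrality hypothesis.

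First I would observe that $R$ being piecewise polyhedral is equivalent to $R^{-1}$ being piecewise polyhedral, because $\mathrm{gph}\,R^{-1}$ is the image of $\mathrm{gph}\,R$ under the coordinate swap $(u,v)\mapsto (v,u)$, an affine bijection, which preserves finite unions of polyhedra. Applying Proposition \ref{prop:piecewisePoly} to the piecewise polyhedral mapping $R^{-1}:\mathcal{U}\rightrightarrows\mathcal{U}$ at the point $0\in\mathcal{U}$ then yields constants $\kappa_0>0$ and $\rho>0$ such that
\[
R^{-1}(v)\subseteq R^{-1}(0)+\kappa_0\|v\|\,\textbf{B}_{\mathcal{U}},\qquad\forall\, v\in \rho\,\textbf{B}_{\mathcal{U}}.
\]
Since $R^{-1}(0)=\overline{\Omega}$, this is exactly the statement that for every $u\in\mathcal{U}$ with $\|R(u)\|\le\rho$ one has $\mathrm{dist}(u,\overline{\Omega})\le \kappa_0\|R(u)\|$.

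Next, for an arbitrarily fixed $\bar u\in\overline{\Omega}$, I would use the continuity of $R$ at $\bar u$ (noted right after the definition of $R$ in $(\ref{F})$, together with $R(\bar u)=0$) to choose $\delta>0$ so that $\|u-\bar u\|\le\delta$ implies $\|R(u)\|\le\rho$. Combining this with the preceding inclusion gives
\[
\mathrm{dist}(u,\overline{\Omega})\le \kappa_0\|R(u)\|,\qquad \forall\, u\in\{u\in\mathcal{U}:\|u-\bar u\|\le\delta\},
\]
which is precisely Assumption \ref{erB2} with $\eta=\kappa_0$. Together with the standing hypothesis $\overline{\Omega}\ne\emptyset$ (i.e.\ Assumption \ref{ass.1}) and the positive definiteness of $\Sigma_{\vartheta_g}+\S+\sigma\A\A^*$ and $\Sigma_{\varphi_h}+\T+\sigma\B\B^*$, all hypotheses of Theorem \ref{th-1-rate} are met.

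Finally, invoking Theorem \ref{th-1-rate} directly produces a constant $\varsigma\in[\mu,1)$ for which the estimate $(\ref{rate-tuales11-a})$ holds for every $k\ge 1$, which is exactly $(\ref{rate-tuales11-poly})$. The only non-routine ingredient is the reduction to Robinson's calmness theorem, so I anticipate no substantial obstacle; the one point that requires care is to verify $R^{-1}$ (rather than $R$) is the object to which Proposition \ref{prop:piecewisePoly} must be applied, and to convert calmness of $R^{-1}$ at $0$ into the error bound on $R$ via continuity of $R$ at $\bar u$.
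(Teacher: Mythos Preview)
Your proposal is correct and follows essentially the same approach as the paper: the paper observes that $R^{-1}$ is piecewise polyhedral if and only if $R$ is, applies Proposition \ref{prop:piecewisePoly} to $R^{-1}$ to obtain the calmness/error bound, and then invokes Theorem \ref{th-1-rate} (together with Lemma \ref{jiu}, which is already absorbed into the proof of Theorem \ref{th-1-rate}) to conclude. Your extra step of using continuity of $R$ at $\bar u$ to translate Robinson's calmness inclusion into the precise neighborhood form of Assumption \ref{erB2} is a correct and useful clarification that the paper leaves implicit.
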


\subsection{Applications to convex composite quadratic conic programming}\label{ccqcp}
 In this subsection  we shall demonstrate how the just established  linear rate convergence theorem  can be applied to the following convex composite   quadratic conic programming
\begin{equation}\label{eq:conicP0}
\begin{array}{ll}
\min & \displaystyle \frac{1}{2}\langle x, {\cal Q} x\rangle+\langle c,x\rangle+\phi(x)\\
{\rm s.t.} & {\cal A}x =b,\ \   x\in {\cal K},
\end{array}
\end{equation}
where $c \in \X$,   $b \in \Re^m$, ${\mathcal Q}: \X\to \X$ is a self-adjoint positive semi-definite linear operator,  ${\cal A}: \X \to \Re^m$ is a linear operator, ${\cal K} $ is a closed convex cone in $\X$ and $\phi: \X \in (-\infty, \infty]$ is a  proper closed convex function whose epigraph is convex polyhedral, i.e., $\phi$ is a closed proper convex polyhedral function.  If ${\cal K} $ is a polyhedral cone, problem (\ref{eq:conicP0}) is called the convex  composite   quadratic programming (QP).

 By introducing an additional variable $u\in \X$, we can rewrite problem (\ref{eq:conicP0})  equivalently as
\begin{equation}\label{eq:conicP}
\begin{array}{ll}
\min & \displaystyle \frac{1}{2}\langle x, {\cal Q} x\rangle+\langle c,x\rangle +\delta_{{\cal K}}(x)+\phi(u)\\
{\rm s.t.} & {\cal A}x =b,\  \   x-u=0.
\end{array}
\end{equation}
Obviously, problem (\ref{eq:conicP}) is in the form of (\ref{ad}). Let the polar of ${\cal K}$ be defined by ${\cal K}^\circ: =\{d\in \X: \langle d, x\rangle \le 0,  \ \forall\, x\in {\cal K}\}$. Denote  the dual cone  of ${\cal K}$ by ${\cal K}^* :=-{\cal K}^\circ$.
The Lagrange dual of problem (\ref{eq:conicP}) takes the form of
$$\begin{array}{ll}
\max &\displaystyle \inf_{x\in \X} \left\{\frac{1}{2}\langle x, {\cal Q} x\rangle +\langle v , x\rangle \right\}+\langle b, y\rangle-\phi^* (-z)\\
{\rm s.t.} & s+ {\cal A}^*y+v+z=c,\ \  s\in {\cal K}^* ,
\end{array}
$$%
which is equivalent to
\begin{equation}\label{eq:conicD}
\begin{array}{ll}
\min & \displaystyle\delta_{{\cal K}^*}(s)-\langle b, y\rangle+\frac{1}{2}\langle w, {\cal Q} w\rangle  +\phi^*(-z)\\
{\rm s.t.} & s+{\cal A}^*y- {\cal Q}w+z=c, \ \  w \in {\cal W},
\end{array}
\end{equation}
where ${\cal W}$ is any linear subspace in $\X$   containing
${\rm Range\,} {\cal Q}$, the range space of ${\cal Q}$, e.g., ${\cal W}=\X$ or ${\cal W}={\rm Range\,} {\cal Q}$. When ${\cal W}=\X$,    problem (\ref{eq:conicD}) is better known as the Wolfe dual to  problem (\ref{eq:conicP}) (see Fujiwara, Han and  Mangasarian \cite{FHMangasarian84} for discussions on the Wolfe dual of conventional nonlinear programming   and Qi \cite{Qi09} on nonlinear semi-definite  programming). So when ${\rm Range\,} {\cal Q}\subseteq {\cal W}\ne \X$, one may call problem (\ref{eq:conicD})
the restricted Wolfe dual to  problem (\ref{eq:conicP}). One particularly useful  case is the restricted Wolfe dual with ${\cal W}={\rm Range\,} {\cal Q}$. The dual problem (\ref{eq:conicD}) has four natural variable-blocks and can be written in the form of (\ref{ad}) in several different ways. The cases that we are interested in applying the sPADMM are: 1)  if ${\cal K} \ne \X$, then $(s,y,w)$ is treated as one variable-block and $z$ the other block; and   2) if ${\cal K} =\X$, then $(w,y)$ is treated as one variable-block and $s$ the other block. We shall only discuss  case 1) as case 2) can be done similarly  in a simpler manner.

First, we consider the application of the sPADMM to the primal problem (\ref{eq:conicP}).  The augmented Lagrangian function ${\cal L}_{\sigma}^P$ for problem (\ref{eq:conicP}) is defined as follows
$$\begin{array}{lcl}
{\cal L}_{\sigma}^P(x,u;y,z)&:=&\displaystyle \frac{1}{2}\langle x, {\cal Q} x\rangle+\langle c,x\rangle +\delta_{{\cal K}}(x)+\phi(u)+\langle y,b-\A x\rangle+\langle z, u-x\rangle
\\[4pt]
& & +
\displaystyle \frac{\sigma}{2}(\|b-\A x\|^2 +\|u-x\|^2), \quad \forall\, (x,u,y,z)\in \X\times \X\times \Re^m \times \X.
\end{array}
$$

\medskip
\fbox{
 \begin{minipage}[t]{0.9\textwidth}{\noindent\bf sPADMM}: A semi-proximal alternating direction method of multipliers for solving the convex optimization problem (\ref{eq:conicP}).

\begin{itemize}
\item[ ]\hskip -.5cm{\bf Step 0.} Input $(x^0, u^0,y^0,z^0)\in {\cal K} \times {\rm dom\, }(\phi)\times \Re^m \times \X.$ Let  $\tau\in (0,\infty)$ be a positive parameter (e.g., $\tau \in ( 0, (1+\sqrt{5})/2)$\,).
   Define $\S: \X\to \X$ to be any self-adjoint  positive semi-definite linear operator, e.g.,  $\S:=0$ if   ${\cal K} =\X$ and
    $\S:= \lambda_{\max}\left ( {\cal Q}+\sigma \A^* \A\right) {\cal I} -\left( {\cal Q}+\sigma \A^* \A\right)
    $ if  $ {\cal K} \ne \X $.
   Set $k:=0$.

\item[ ]\hskip -.5cm{\bf Step 1.} Set
 \begin{equation*}
\left\{\begin{aligned}
&x^{k+1}= \hbox{arg}\min \,{\cal L}_{\sigma}^P(x,u^k;y^k,z^k) +\frac{1}{2}\|x-x^k\|^2_\S\, ,\\
&u^{k+1}= \hbox{arg}\min\, {\cal L}_{\sigma}^P(x^{k+1},u;y^{k},z^k)\, ,\\
&y^{k+1}=y^k+\tau\sigma (b-\A x^{k+1}) \ \ \& \ \ z^{k+1} = z^k + \tau\sigma (u^{k+1}-x^{k+1}).
\end{aligned}\right.
\end{equation*}
\item[ ]\hskip -.5cm{\bf Step 2.} If a termination criterion is not met, set $k:=k+1$ and go to Step 1.
\end{itemize}
 \end{minipage} }

\medskip
It is easy to see from Theorem \ref{th-1-rate} that as long as Assumptions \ref{ass.1} and \ref{erB2}  for problem (\ref{eq:conicP}) hold and $\tau \in ( 0, (1+\sqrt{5})/2)$, the infinite sequence $\{(x^k,u^k,y^k,z^k)\}$ generated by the sPADMM for solving   problem (\ref{eq:conicP}) converges to a KKT point of problem (\ref{eq:conicP}) globally at a   linear rate. Note that Assumption  \ref{erB2} holds automatically if ${\cal K}$ is   convex polyhedral,  e.g., ${\cal K} =\X$ or ${\cal K} =\Re^n_+$.

Next, we turn to the dual problem (\ref{eq:conicD}). As mentioned earlier,   problem (\ref{eq:conicD}) has four natural variable-blocks. Since the directly extended ADMM to the multi-block case may be divergent even the dual setp-length $\tau $ is taken to be as small as $10^{-8}$ \cite{CHYYuan2014}, one needs new ideas   to deal with  problem (\ref{eq:conicD}). Here, we will adopt the smart symmetric Gauss-Seidel (sGS) technique invented by
Li et al. \cite{LSToh2014}. For details on the sGS technique, see \cite{LiXDThesis2015}.
 Most recent research  has shown that it is much more efficient to solve the dual problem (\ref{eq:conicD}) rather than its primal counterpart (\ref{eq:conicP}) in the context of semi-definite programming and convex quadratic semi-definite programming \cite{STYang2015,LSToh2014,LiXDThesis2015,CSToh2015}. At the first glance, this seems to be counter-intuitive as problem (\ref{eq:conicD}) looks much more complicated than the primal problem (\ref{eq:conicP}). The key point for the more efficiency in dealing with the dual problem is to intelligently combine  the above mentioned sGS technique with the sPADMM, which will be shown below.

 The augmented Lagrangian function ${\cal L}_{\sigma}^D$ for problem (\ref{eq:conicD}) is defined as follows
$$\begin{array}{lcl}
{\cal L}_{\sigma}^D(s,y,w,z;x)&:=&\displaystyle \delta_{{\cal K}^*}(s)-\langle b, y\rangle+\frac{1}{2}\langle w, {\cal Q} w\rangle  +\phi^*(-z)+\langle x,s+\A^*y -{\cal Q}w + z - c\rangle
\\[4pt]
& & +
\displaystyle \frac{\sigma}{2}\| s+\A^*y -{\cal Q}w + z - c\|^2, \ \forall\, (s,y,w,z,x)\in \X\times\Re^m\times \W \times \X \times \X.
\end{array}
$$

\medskip
\fbox{
 \begin{minipage}[t]{0.9\textwidth}{\noindent\bf sGS-sPADMM}: A symmetric Gauss-Seidel based  semi-proximal alternating direction method of multipliers for solving  problem (\ref{eq:conicD}).

\begin{itemize}
\item[ ]\hskip -.5cm{\bf Step 0.} Input $(s^0,y^0,w^0,z^0,x^0)\in{\cal K}^*\times\Re^m\times{\cal W}\times(-\hbox{dom}\; \phi^*)\times \mathcal{X}.$ Let  $\tau\in (0,\infty)$ be a positive parameter (e.g., $\tau \in ( 0, (1+\sqrt{5})/2)$\,).   Choose any two  self-adjoint  positive semi-definite linear operators $\S_1: \Re^m \to \Re^m$ and $\S_2: \W \to \W$ satisfying $\S_1 +\sigma \A\A^* \succ 0$ and $\S_2 +{\cal Q} +\sigma {\cal Q}^2\succ 0$.
       Set $k:=0$.
\item[ ]\hskip -.5cm{\bf Step 1.} Set
\begin{equation*}
\left\{
\begin{aligned}
&{w}^{k+\frac{1}{2}} = \hbox{arg}\min\, \L_{\sigma}^D(s^k,y^k,w,z^k;x^k) +\displaystyle \frac{1}{2}\|w-w^k \|^2_{{\cal S}_2} \, ,\\
&{ y}^{k+\frac{1}{2}} = \hbox{arg}\min\, \L_{\sigma}^D(s^k,y,w^{k+\frac{1}{2}},z^k;x^k)+ \frac{1}{2}\|y-y^k \|^2_{{\cal S}_1}\, ,\\
&s^{k+1} = \hbox{arg}\min\, \L_{\sigma}^D(s,y^{k+\frac{1}{2}},w^{k+\frac{1}{2}},z^k;x^k)\, ,\\
&y^{k+1} = \hbox{arg}\min\, \L_{\sigma}^D(s^{k+1},y,w^{k+\frac{1}{2}},z^k;x^k)+ \frac{1}{2}\|y-y^k \|^2_{{\cal S}_1}\, ,\\
&w^{k+1} = \hbox{arg}\min\, \L_{\sigma}^D(s^{k+1},y^{k+1},w,z^k;x^k) +\displaystyle \frac{1}{2}\|w-w^k \|^2_{{\cal S}_2} \, ,\\
&z^{k+1} = \hbox{arg}\min\, \L_{\sigma}^D(s^{k+1},y^{k+1},w^{k+1},z;x^k)\, ,\\
&x^{k+1}=x^k+\tau\sigma (s^{k+1}+\A^*y^{k+1} -{\cal Q}w^{k+1} + z^{k+1} - c).
\end{aligned}
\right.
\end{equation*}
\item[ ]\hskip -.5cm{\bf Step 2.} If a termination criterion is not met, set $k:=k+1$ and go to Step 1.
\end{itemize}
 \end{minipage} }

 \medskip

Note that in the above Algorithm sGS-sPADMM, one can always choose ${\cal S}_1 = 0$ if $\A:\X\to\Re^m$ is surjective and ${\cal S}_2=0$ if ${\cal W} = {\rm Range}\, ({\cal Q})$.
The global convergence of Algorithm sGS-sPADMM is established in \cite{LSToh2014} by connecting it into an equivalent sPADMM scheme \eqref{xna}--\eqref{xnc} for solving a particular  problem of the form \eqref{ad}.
By using the same connection, just as for the primal case, one can use Theorem \ref{th-1-rate} to derive the linear rate convergence of the infinite sequence $\{(s^k,y^k,w^k,z^k,x^k)\}$ generated by Algorithm sGS-sPADMM  if Assumptions \ref{ass.1} and \ref{erB2}  hold for problem (\ref{eq:conicD}) and $\tau \in ( 0, (1+\sqrt{5})/2)$. As mentioned earlier, Assumption  \ref{erB2} holds automatically if ${\cal K}$ is   convex polyhedral. However, for a non-polyhedral ${\cal K}$, there exist few results about the existence of the error bound condition as in Assumption \ref{erB2} except for ${\cal K}$ to be either a second order cone \cite{BR2005} or an SDP cone \cite{Sun2006}, where the strong regularity introduced by Robinson \cite{Robinson1979} is characterised in terms of the strong second order sufficient condition and the constraint nondegeneracy. The strong regularity provides a sufficient condition for Assumption \ref{erB2} to hold. Since  the isolated calmness condition given in Definition \ref{def:calm} is a much weaker condition than the strong regularity,  in the next two sections, we shall conduct a thorough study on the isolated calmness in the context of composite semi-definite, convex and non-convex, optimization problems. The obtained results on the isolated calmness are not only useful for deriving the linear rate convergence of the sPADMM but also represent substantial advancements in the context of second order variational analysis for conic optimization problems constrained with non-polyhedral  convex cones. As a final note to this section, we comment  that in all the above applications, while the linear operator $\S$ may take various values,  the linear operator  $\T\equiv 0$.

\section{Characterizations of the isolated calmness for  semi-definite optimization problems}\label{section:isolatedcalmness}
 \setcounter{equation}{0}
Let  ${\cal Z}$ be a  finite dimensional real Euclidean  space. For an integer $p >0$, let  $\mathbb{S}^{p}_+$ be the positive semi-definite cone of all symmetric positive semi-definite matrices in  the space $\mathbb{S}^{p}$ of $p$ by $p$ real symmetric matrices. Denote ${\cal Y}:=\mathbb{S}^{p} \times {\cal Z}$ and  $\mathbb{S}^{p}_-:=-\mathbb{S}^{p}_+=(\mathbb{S}^{p}_+)^\circ$.   Next, we shall consider the isolated calmness for the KKT  system to the following  semi-definite optimization problem:
\begin{equation}\label{1.1}
\begin{array}{ll}
\min & f(x)\\
{\rm s.t.} & {\cal G}(x) \in {\cal K},
\end{array}
\end{equation}
where $f:{\cal X} \rightarrow\Re$ is a twice continuously differentiable  function, ${\cal G}:{\cal X}\rightarrow{\cal Y}$ is a twice continuously differentiable mapping with ${\cal G}=(\phi,\psi)$ for $\phi:{\cal X}\rightarrow \mathbb{S}^p$ and $\psi:{\cal X}\rightarrow {\cal Z}$,  ${\cal K}=\mathbb{S}^{p}_-\times {\cal P}$ and ${\cal P} \subset {\cal Z}$ is a nonempty convex polyhedral set.
Let $\Phi=\{x\in {\cal X}:{\cal G}(x)\in{\cal K}\}$ be the feasible set for problem (\ref{1.1}). Let $\bar{x}\in \Phi$. We say that Robinson's  constraint qualification (RCQ) for problem (\ref{1.1}) holds at $\bar{x}$ if
$$0\in {\rm int} \{ {\cal G}(\bar {x})+{\rm D}{\cal G}(\bar{x}){\cal X}-{\cal K}\},
$$
where ``int" denotes the topological interior part of a given set.
The Largangian function of  problem (\ref{1.1}) is defined as
$$
{\cal L}(x;y,z):=f(x)+ \langle y, \phi(x)\rangle+\langle z, \psi(x)\rangle, \quad \forall\, (x,y,z) \in \X\times {\mathbb{S}^p} \times {\cal Z}.
$$
For any $(y,z)\in \mathbb{S}^p\times {\cal Z}$, let ${\rm D}_x {\cal L}({x}; {y},z)$ denote  the derivative of ${\cal L}(\cdot; y,z)$ at $x\in {\cal X}$ and denote   $\nabla_x{\cal L}(x;y,z):=({\rm D}_x{\cal L}(x;y,z))^*$.
If   there exists $(\bar{y},\bar z)\in {\cal Y}$  such that $(\bar x, \bar{y}, \bar{z})$ satisfies  the KKT system
\begin{equation}\label{2.3}
\left\{
\begin{array}{l}
\nabla_x{\cal L}(\bar{x}; \bar{y},\bar z)=0,\\[4pt]
(\bar{y},\bar z)\in  N_{\cal K}({\cal G}(\bar{x})),
\end{array}
\right.
\end{equation}
then we call $\bar{x}$ a stationary point of problem (\ref{1.1}) and  $(\bar y,\bar z)$ a Lagrangian multiplier of problem (\ref{1.1}) at $\bar x$. Here $N_{\cal K}(w)$ denotes the normal cone of ${\cal K}$  at $w\in \Y$.   Denote by $\Lambda(\bar{x})$ the
  set of all $(\bar y,\bar z)\in  {\mathbb{S}^p} \times {\cal Z}$ satisfying (\ref{2.3}). If $\bar x$ is a local minimizer to  problem (\ref{1.1}), then the set $\Lambda(\bar{x})$ is nonempty, convex and compact if and only if the RCQ holds at $\bar{x}$. The strict Robinson constraint qualification (SRCQ for short) at $\bar{x}$ with respect to $(\bar y,\bar z)\in \Lambda(\bar{x})$ is defined by (see Bonnans and Shapiro \cite{BS2000})
\begin{equation}\label{2.5}
\begin{array}{l}
{\rm D} {\cal G} (\bar{ x}) \X + T_{\cal K}({\cal G}(\bar{ x}))\cap(\bar{ y},\bar z)^\bot = {\cal Y},
\end{array}
\end{equation}
where for any vector $w\in \Y$, $w^\perp: = \{ y\in \Y: \, \langle w, y\rangle =0\}$.
 Obviously, the SRCQ is more restrictive than the RCQ. It follows from  Bonnans and Shapiro \cite[Proposition 4.50]{BS2000} that the set of Lagrange multipliers $\Lambda(\bar{ x})$ is a singleton if the SRCQ
 (\ref{2.5}) holds. 

Let $\bar{ x}\in\Phi$ be a feasible point. The critical cone of problem (\ref{1.1}) at $\bar{ x}$ is defined by\\
$$
{\cal C}(\bar{ x}) := \{ d\in\X : \, {\rm D}{\cal G}(\bar{ x})d\in T_{\cal K} ({\cal G}(\bar{ x})),\ {\rm D}f(\bar x)d\leq 0\}.
$$
\begin{definition}[The second-order sufficient optimality condition]\label{Definition3.1}
Let $\bar{ x}$ be a stationary point of problem (\ref{1.1})
at which $\Lambda(\bar{ x}) \neq \emptyset$. We say that the second-order sufficient optimality condition for  problem (\ref{1.1}) holds at $\bar{ x}$ if
$$
\sup_{(y,z)\in \Lambda(\bar{ x})} \left\{ \left\langle d, \nabla_{xx}^2{\cal L}(\bar{x};y,z)d\right\rangle  + 2 \left\langle y, {\rm D}\phi(\bar x)d \left[-\phi(\bar x)\right]^\dagger {\rm D}\phi(\bar x)d\right \rangle\right\}> 0,\quad \forall\, 0\ne d\in {\cal C}(\bar{ x}),
$$
where  for   $(y,z) \in \mathbb{S}^p\times {\cal Z}$, $\nabla_{xx}^2{\cal L}(\cdot; y, z):={\rm D}_x[\nabla_x {\cal L}] (\cdot; y,z)$
  and for any matrix $S\in \mathbb{S}^p$, $S^\dagger$ denotes the Moore-Penrose pseudo-inverse of $S$.
\end{definition}
If follows from \cite[Theorem 3.86]{BS2000} that  if the second-order sufficient optimality condition for problem (\ref{1.1}) holds  at $\bar{ x}$, then the second-order growth condition for problem (\ref{1.1}) holds  at $\bar{ x}$, which implies that $\bar{x}$ is  a  strictly local optimal solution to problem (\ref{1.1}).

Define the KKT  mapping $G:{\cal X} \times\mathbb{S}^p\times  {\cal Z} \rightarrow {\cal X} \times \mathbb{S}^p\times  {\cal Z}$, associated with problem (\ref{1.1}), by
\begin{equation}\label{3.3}
G(x,y,z): =
\left[
\begin{array}{c}
\nabla_x{\cal L}(x;y,z)\\[6pt]
-{\cal G}(x)+ \Pi_{\cal K}({\cal G}(x)+(y,z))
\end{array}
\right],\quad \forall\, (x,y,z)\in {\cal X} \times\mathbb{S}^p\times  {\cal Z}.
\end{equation}
For characterizing  the isolated calmness  property for the mapping $G^{-1}$, we need some simple but useful properties on the non-polyhedral cone $\mathbb{S}^p_-$ and the polyhedral set ${\cal P}$.

Suppose that  $A\in \mathbb{S}^p$ and $ B\in \mathbb{S}^p_-$ are two matrices satisfying $A \in N_{{\mathbb{S}}^p_{-}}(B)$ or equivalently $B \in N_{{\mathbb{S}}^p_{+}}(A)$ with  $A\in \mathbb{S}^p_+$. Note that   $AB =BA=0$ and $B = \Pi_{\mathbb{S}^p_-}(B+A)$. Let   $C:= B+A$ and  $\lambda_1\ge \lambda_2 \ge \ldots\ge \lambda_p $ be its eigenvalues being arranged in the non-increasing order. Define $\alpha: =\{i: \, \lambda_i>0,  \, i=1, \ldots, p\}$,  $\beta :=\{i: \, \lambda_i=0, \, i=1, \ldots, p\}$ and
 $\gamma :=\{i: \, \lambda_i<0, \, i=1, \ldots, p\}$. Then there exists an orthogonal matrix $P\in \Re^{p\times p}$
 such that
 \begin{equation}\label{eq:spectralDecomposition}
 A = P\left[
\begin{array}{lll}
\Lambda_{\alpha}&0&0\\
0&0   &0\\
0&0&0_{\gamma}\\
\end{array}
\right]P^T, \ B = P\left[
\begin{array}{lll}
0_{\alpha}&0&0\\
0&0  &0\\
0&0&\Lambda_{\gamma}\\
\end{array}
\right]P^T,\ C = P\left[
\begin{array}{lll}
\Lambda_{\alpha}&0&0\\
0&0   &0\\
0&0&\Lambda_{\gamma}\\
\end{array}
\right]P^T,
 \end{equation}
where $\Lambda_\alpha\succ 0$ is  the diagonal matrix whose diagonal entries are $\lambda_i$ for $i\in \alpha$ and $\Lambda_{\gamma}\prec0$ is  the diagonal matrix whose diagonal entries are $\lambda_j$ for $j\in \gamma$, respectively.
 Write
$
P=
 [
P_{\alpha}\ P_{\beta} \ P_{\gamma}
 ]
$ with $P_\alpha \in \Re^{p\times |\alpha|}$, $P_\beta \in \Re^{p\times |\beta|}$ and $P_\gamma \in \Re^{p\times |\gamma|}$ and define
 $\Upsilon , \overline{\Upsilon} \in \Re^{|\alpha|\times |\gamma|}$   by
 $$
 \Upsilon_{ij}=\displaystyle \frac{-\lambda_j}{\lambda_i-\lambda_j},\quad \overline{\Upsilon}_{ij}=1-\Upsilon_{ij}, \quad \forall\, (i,j+|\alpha \cup \beta|) \in \alpha\times \gamma.
 $$
 It is known  from  \cite{BCShapiro98,BCShapiro99} that $\Pi_{\mathbb{S}^p_-}(\cdot)$ is directionally differentiable everywhere and   from  \cite{SSun02,PSSun2003} that
the directional derivative of  $\Pi_{\mathbb{S}^p_-}$ at $C$ along   $H\in \mathbb{S}^n$ is explicitly given by
\[\label{eq:extra100}
\Pi_{\mathbb{S}^p_-}^\prime(C;H)=\left
[
\begin{array}{ccc}
0 & 0 &P_{\alpha}^TH P_{\gamma}\circ \Upsilon\\[6pt]
0 & \Pi_{\mathbb{S}^{|\beta|}_-}(P_{\beta}^T H P_{\beta}) &P_{\beta}^T H P_{\gamma}\\[6pt]
P_{\gamma}^T H P_{\alpha}\circ \Upsilon^T & P_{\gamma}^T H P_{\beta} &P_{\gamma}^T H P_{\gamma}
\end{array}
\right ],
\]
where $``\circ"$ denotes the Hadamard product.
 Then, by Arnold \cite{Arnold71}, we know that the tangent cone of $\mathbb{S}^p_-$ at $ B\in \mathbb{S}^p_-$ takes the form of
$$
T_{\mathbb{S}^p_-}(B)=  \{H\in \mathbb{S}^p: \, H = \Pi_{\mathbb{S}^p_-}^\prime(B;H) \}=\{H\in \mathbb{S}^p:\, [P_\alpha\ P_\beta]^T H [P_\alpha\ P_\beta] \preceq 0\}
$$
and the critical cone of $\mathbb{S}^p_-$ at $C$, associated with $A \in N_{{\mathbb{S}}^p_{-}}(B)$,  is given by
\begin{equation}\label{eq:criticalCone}
{\cal C}_{\mathbb{S}^p_-}(C):= T_{\mathbb{S}^p_-}(B)\cap A^{\perp}=\left \{H\in \mathbb{S}^p:\, P_\alpha^T H [P_\alpha\ P_ \beta] =0, \ P_\beta^T H P_\beta\preceq 0\right\}.
\end{equation}
Analogously, the critical cone of $\mathbb{S}^p_+$ at $C$, associated with $B\in N_{{\mathbb{S}}^p_{+}}(A)$,  is given by
\begin{equation}\label{eq:criticalConeP}
 {\cal C}_{\mathbb{S}^p_+}(C): = T_{\mathbb{S}^p_+}(A)\cap B^{\perp}=\left \{H\in \mathbb{S}^p:\, P_\gamma^T H [P_\beta\ P_ \gamma] =0, \ P_\beta^T H P_\beta\succeq 0\right\}.
\end{equation}
\begin{lemma}\label{lem:projection-sigma-matrix} Suppose that $A\in \mathbb{S}^p$ and $ B\in \mathbb{S}^p_-$ are two matrices satisfying $A \in N_{{\mathbb{S}}^p_{-}}(B)$. Let  $A$, $B$ and $C:= B+A$ have the spectral decompositions as in (\ref{eq:spectralDecomposition}). Then we have the following results:
\begin{description}
\item[(i)]  For any given matrix $H\in \mathbb{S}^p$,
$$
H\in \left ({\cal C}_{\mathbb{S}^p_-}(C)\right)^{\circ}\Longleftrightarrow P_\alpha^THP_\gamma =0 \quad  \&\quad   H \in {\cal C}_{\mathbb{S}^p_+}(C)
$$
and
$$
H \in  \left(   {\cal C}_{\mathbb{S}^p_+}(C) \right)^{\circ} \Longleftrightarrow P_\alpha^THP_\gamma =0\quad \& \quad H\in  {\cal C}_{\mathbb{S}^p_-}(C).
$$

\item[(ii)] Let $\Delta A$ and $\Delta B$ be two matrices in $\mathbb{S}^p$. Then
$$
\Delta A-\Pi'_{\mathbb{S}^{p}_-}(C;\Delta A+\Delta B)=0
$$
if and only if
 \begin{equation}\label{eq:relationskL}
  \begin{array}{l}
   P^{T}_{\alpha}(\Delta A)[P_{\alpha} \ P_{\beta}]=0,\\
    P^{T}_{\alpha}(\Delta A)P_{\gamma} \circ  \overline{\Upsilon}
    =P^{T}_{\alpha}(\Delta B)P_{ \gamma}\circ \Upsilon,\\
  P^{T}_{\beta}(\Delta A)P_{\beta}=\Pi_{\mathbb{S}^{|\beta|}_-}(P^{T}_{\beta}(\Delta A+ \Delta B)P_{\beta}),\\
     {}[P_{\beta} \ P_{\gamma}]^T (\Delta B) P_{\gamma}=0.\\
     \end{array}
  \end{equation}
Moreover, the relations in (\ref{eq:relationskL}) imply
$$\Delta A\in {\cal C}_{\mathbb{S}^p_-}(C) \quad \& \quad
 \langle \Delta A,\Delta B \rangle=2\langle A,(\Delta A) [-B]^{\dag} (\Delta A)\rangle.
 $$
 \end{description}
\end{lemma}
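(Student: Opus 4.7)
The plan is to prove both parts by diagonalizing everything simultaneously via the orthogonal matrix $P$ in (\ref{eq:spectralDecomposition}), so that $A$, $B$, $C$ are diagonal in the chosen basis, and then reading off the conclusions block by block using the index sets $\alpha$, $\beta$, $\gamma$.

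For part (i), I would take an arbitrary $H \in \mathbb{S}^p$ and write $P^T H P$ as a $3\times 3$ block matrix indexed by $\alpha,\beta,\gamma$. Using the explicit description of ${\cal C}_{\mathbb{S}^p_-}(C)$ in (\ref{eq:criticalCone}), an element $H'$ of that cone has its $\alpha\alpha$ and $\alpha\beta$ blocks equal to zero, its $\beta\beta$ block in $\mathbb{S}^{|\beta|}_-$, and its $\alpha\gamma$, $\beta\gamma$, $\gamma\gamma$ blocks free. Expanding $\langle H, H'\rangle$ block-by-block and requiring non-positivity for all such $H'$ forces the corresponding blocks of $P^T H P$ to vanish or be positive semidefinite, and the resulting block conditions are exactly ``$P_\alpha^T H P_\gamma = 0$ together with $H \in {\cal C}_{\mathbb{S}^p_+}(C)$'' after comparing with (\ref{eq:criticalConeP}). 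The second equivalence is symmetric by interchanging $\alpha \leftrightarrow \gamma$.

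For part (ii), I would substitute the explicit directional derivative formula (\ref{eq:extra100}) into the equation $\Delta A = \Pi'_{\mathbb{S}^p_-}(C;\Delta A+\Delta B)$ and equate the nine blocks of $P^T\Delta A P$ with the nine blocks on the right. The $\alpha\alpha$ and $\alpha\beta$ comparisons yield $P_\alpha^T(\Delta A)[P_\alpha\ P_\beta]=0$; the $\beta\gamma$ and $\gamma\gamma$ comparisons yield $[P_\beta\ P_\gamma]^T(\Delta B)P_\gamma=0$; the $\beta\beta$ block gives the $\Pi_{\mathbb{S}^{|\beta|}_-}$ equation; and the $\alpha\gamma$ block gives $P_\alpha^T(\Delta A)P_\gamma - (P_\alpha^T(\Delta A)P_\gamma)\circ\Upsilon = (P_\alpha^T(\Delta B)P_\gamma)\circ\Upsilon$, which (using $\overline{\Upsilon} = E - \Upsilon$) is precisely the second line of (\ref{eq:relationskL}). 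All steps are reversible, so the equivalence holds.

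For the ``moreover'' clause, the inclusion $\Delta A \in {\cal C}_{\mathbb{S}^p_-}(C)$ is immediate from (\ref{eq:relationskL}) and (\ref{eq:criticalCone}). For the trace identity, I would expand $\langle \Delta A,\Delta B\rangle = {\rm tr}((P^T\Delta A P)(P^T\Delta B P))$ block by block. The vanishing blocks in (\ref{eq:relationskL}) kill most terms; the $\beta\beta$ contribution vanishes because $\Delta A_{\beta\beta} = \Pi_{\mathbb{S}^{|\beta|}_-}(W)$ and $\Delta B_{\beta\beta} = \Pi_{\mathbb{S}^{|\beta|}_+}(W)$ for $W = P_\beta^T(\Delta A + \Delta B)P_\beta$, and these are orthogonal under the Moreau decomposition. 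Only the $\alpha\gamma$ and $\gamma\alpha$ contributions survive, giving $2\langle P_\alpha^T\Delta A P_\gamma,\, P_\alpha^T\Delta B P_\gamma\rangle$. Using $\Upsilon_{ij}/\overline{\Upsilon}_{ij} = -\lambda_j/\lambda_i$ from the definitions, the relation $\Delta A_{\alpha\gamma}\circ\overline{\Upsilon} = \Delta B_{\alpha\gamma}\circ\Upsilon$ yields $(\Delta B_{\alpha\gamma})_{ij} = -(\lambda_i/\lambda_j)(\Delta A_{\alpha\gamma})_{ij}$, so the sum equals $-\sum_{i\in\alpha,j\in\gamma}(\Delta A_{\alpha\gamma})_{ij}^2\,\lambda_i/\lambda_j$. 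On the other hand, since $[-B]^\dagger = P\,{\rm Diag}(0,0,-\Lambda_\gamma^{-1})P^T$ and $A = P\,{\rm Diag}(\Lambda_\alpha,0,0)P^T$, a direct computation of $\langle A,(\Delta A)[-B]^\dagger(\Delta A)\rangle$ gives exactly the same expression, completing the identity. The main obstacle is the bookkeeping in this last computation, particularly verifying the arithmetic link between the Hadamard factor $\Upsilon/\overline{\Upsilon}$ and the eigenvalue ratio $\lambda_i/(-\lambda_j)$ that couples the $\alpha\gamma$ block identity to the pseudoinverse formula.
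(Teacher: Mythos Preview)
Your proposal is correct and follows precisely the approach the paper has in mind. The paper's own proof merely points to the explicit formulas (\ref{eq:criticalCone}), (\ref{eq:criticalConeP}), (\ref{eq:spectralDecomposition}), (\ref{eq:extra100}) together with the Moreau decomposition equivalence $P_\beta^T(\Delta A)P_\beta=\Pi_{\mathbb{S}^{|\beta|}_-}(P_\beta^T(\Delta A+\Delta B)P_\beta)\Longleftrightarrow \mathbb{S}^{|\beta|}_-\ni P_\beta^T(\Delta A)P_\beta\perp P_\beta^T(\Delta B)P_\beta\in\mathbb{S}^{|\beta|}_+$ and then omits the details; your block-by-block analysis is exactly how those details are filled in.
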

\begin{proof} The conclusions  of  part (i) follow directly from (\ref{eq:criticalCone}) and (\ref{eq:criticalConeP})
 while
 the  conclusions of  part (ii) can be derived with no difficulty from (\ref{eq:spectralDecomposition}), (\ref{eq:extra100}), (\ref{eq:criticalCone})
 and the fact that
 $$P^{T}_{\beta}(\Delta A)P_{\beta}=\Pi_{\mathbb{S}^{|\beta|}_-}(P^{T}_{\beta}(\Delta A+ \Delta B)P_{\beta}) \Longleftrightarrow
{\mathbb{S}}^p_-  \ni P^{T}_{\beta}(\Delta A)P_{\beta}\perp P^{T}_{\beta}(\Delta B)P_{\beta}  \in {\mathbb{S}}^p_+.$$
We omit the details here.
 \end{proof}

 \begin{lemma}\label{lem:projection-sigmaP}
Let ${\cal P} \subset {\cal Z}$ be a given nonempty convex polyhedral set.
\begin{description}
\item[(i)] Let $a,b,c\in {\cal Z}$. Write $c_+:=\Pi_{{\cal P}}(c)$ and $c_-:=c-c_+$. Then
\begin{equation}\label{eq:a20i-1}
 \Pi'_{{\cal P}}(c; b)= \Pi_{T_{\cal P}(c_+)\cap c_-^{\bot}}(b).
\end{equation}
Moreover,
\begin{equation}\label{eq:a20i}
a -\Pi'_{{\cal P}}(c; a+b)=0
\end{equation}
if and only if
\begin{equation}\label{eq:poly-cri}
a\in T_{{\cal P}}(c_+)\cap c_-^{\bot} \quad \&\quad  b \in N_{T_{{\cal P}}(c_+)\cap c_-^{\bot}}(a).
\end{equation}

\item[(ii)] Let   $b \in {\cal P}$ and $0 \in a+ N_{{\cal P}}(b)$.  For the critical cone
   $$ {\cal C}_{{\cal P}}(b-a):=T_{{\cal P}}(b)\cap a^{\perp},
    $$
    we have
    \[\label{critical-expr}
    {\cal C}_{{\cal P}}(b-a)=S_{b,a}^{*},
    \]
    where     $S_{b,a}$ is a nonempty closed convex cone defined by
   $$ S_{b,a}:=\{u\in {\cal Z}:\, \langle u, b\rangle+(\delta^*_{{\cal P}})'(-a;-u)=0\}.
    $$
\end{description}
\end{lemma}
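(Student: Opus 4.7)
My plan for part (i) is to first establish the directional-derivative identity (\ref{eq:a20i-1}) and then read off the equivalence by interpreting (\ref{eq:a20i}) as a projection onto a closed convex cone. Formula (\ref{eq:a20i-1}) is the classical polyhedral-projection result of Haraux/Mignot: because $\Pi_{\mathcal{P}}$ is piecewise affine, for sufficiently small $t>0$ the point $\Pi_{\mathcal{P}}(c+tb)$ lies on the same face of $\mathcal{P}$ as $c_+$, and extracting the linear dependence on $t$ identifies the right-derivative as the projection onto the critical cone $K := T_{\mathcal{P}}(c_+)\cap c_-^{\bot}$. With (\ref{eq:a20i-1}) in hand, (\ref{eq:a20i}) becomes $a = \Pi_K(a+b)$; since $K$ is a closed convex cone, this is equivalent to $a \in K$ together with $\langle b, v-a\rangle \le 0$ for every $v \in K$, which is precisely (\ref{eq:poly-cri}).

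For part (ii), the hypothesis $0 \in a + N_{\mathcal{P}}(b)$ reads $-a \in N_{\mathcal{P}}(b)$, equivalently $b \in \partial \delta^*_{\mathcal{P}}(-a)$ with $\delta^*_{\mathcal{P}}(-a) = \langle -a, b\rangle$. Set $F := \partial \delta^*_{\mathcal{P}}(-a) = \{p \in \mathcal{P} : \langle a, p-b\rangle = 0\}$: the face of $\mathcal{P}$ exposed by the direction $-a$, which is polyhedral and contains $b$. I intend to prove (\ref{critical-expr}) via the two identifications $\mathcal{C}_{\mathcal{P}}(b-a) = T_F(b)$ and $S_{b,a} = T_F(b)^*$, after which the bipolar theorem immediately yields $S_{b,a}^* = T_F(b)^{**} = T_F(b) = \mathcal{C}_{\mathcal{P}}(b-a)$.

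For the first identification, the inclusion $T_F(b) \subseteq T_{\mathcal{P}}(b) \cap a^{\bot}$ is immediate from $F \subseteq \mathcal{P}$ and the constancy of $\langle a, \cdot\rangle$ on $F$; for the reverse, polyhedrality of $\mathcal{P}$ lets me extend any $d \in T_{\mathcal{P}}(b) \cap a^{\bot}$ to a segment $\{b+sd : s \in [0,t]\} \subseteq \mathcal{P}$, on which $\langle a, \cdot\rangle = \langle a, b\rangle$, placing the segment in $F$ and yielding $d \in T_F(b)$. For the second identification, the directional derivative of the polyhedral convex function $\delta^*_{\mathcal{P}}$ at $-a$ in direction $-u$ is the support function of its subdifferential, namely $(\delta^*_{\mathcal{P}})'(-a;-u) = \sup_{w \in F} \langle -u, w\rangle = -\inf_{w \in F}\langle u, w\rangle$; invoking $b \in F$, the defining equation of $S_{b,a}$ reduces to $\langle u, w-b\rangle \ge 0$ for every $w \in F$, i.e., $u \in (F-b)^*$, and since the polyhedral cone $T_F(b)$ is the conical hull of $F-b$, this is precisely $u \in T_F(b)^*$. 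The only genuine obstacle I foresee is the rigorous proof of (\ref{eq:a20i-1}); once that polyhedral-projection lemma is granted, the rest of both parts is an assembly of standard convex duality together with the structure of polyhedral faces.
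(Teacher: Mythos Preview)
Your proposal is correct and follows essentially the same route as the paper. For part (i) the paper likewise cites the standard polyhedral-projection formula (Theorem~4.1.1 of \cite{FP2003}) for \eqref{eq:a20i-1} and then reads off the equivalence $a=\Pi_K(a+b)\Leftrightarrow a\in K,\ b\in N_K(a)$; for part (ii) your face $F$ is exactly the paper's ${\cal P}_{-a}=\{b'\in{\cal P}:\langle -a,b'\rangle=\delta^*_{\cal P}(-a)\}$, your identification $S_{b,a}=(F-b)^*=T_F(b)^*$ is the paper's $S_{b,a}=-N_{{\cal P}_{-a}}(b)$, and your $T_F(b)=T_{\cal P}(b)\cap a^{\perp}$ is obtained in the paper via the polyhedral intersection $T_{{\cal P}\cap L}(b)=T_{\cal P}(b)\cap T_L(b)$ with $L=\{b':\langle b'-b,a\rangle=0\}$.
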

\begin{proof}
Since ${\cal P}$ is a nonempty convex polyhedron, we have from   Theorem 4.1.1 of \cite{FP2003} that (\ref{eq:a20i-1}) is true and equality (\ref{eq:a20i}) is equivalent to
$$a=\Pi_{T_{\cal P}(c_+)\cap c_-^{\bot}}(a+b),
$$
which  is equivalent to (\ref{eq:poly-cri}).  So the conclusions in  part (i) hold.

Now we turn to the proof of part (ii).
It follows from  \cite[Corollary 19.2.1]{Roc70} that $\delta^*_{\cal P}$ is a proper closed convex polyhedral function. Then we know from \cite[Theorem 23.10]{Roc70}  and \cite[Corollary 23.5.3]{Roc70} that
$$(\delta^*_{\cal P})'(-a;u)=\delta^*_{{\cal P}_{-a}}(u), \quad \forall\, u\in {\cal Z},
$$
where
$${\cal P}_{-a}:=\{b'\in {\cal P}:\langle -a,b'\rangle=\delta^*_{\cal P}(-a)\}.
$$
By using the assumption $-a \in N_{\cal P}(b) =\partial \delta _{\cal P} (b)$, we know that $b\in \partial \delta_{\cal P}^* (-a)$. Therefore, $b\in {\cal P}_{-a}$ and
$$
\begin{array}{ll}
S_{b,a}&=\{u: \langle u, b\rangle+(\delta^*_{\cal P})'(-a;-u)=0\}\\
&=\{u: \langle u, b\rangle+\delta^*_{{\cal P}_{-a}}(-u)=0\}\\
&=\{u: \langle u, b\rangle -\langle u, b'\rangle\leq 0,\  \forall \, b' \in {\cal P}_{-a}\}\\
&=\{u:\langle u, b'- b\rangle \geq 0,\ \forall\,  b' \in {\cal P}_{-a}\}\\
&=-N_{{\cal P}_{-a}}(b).
\end{array}
$$
Thus,  $S_{b,a} $ is a nonempty closed convex cone with $S_{b,a}^{*}=T_{{\cal P}_{-a}}(b)$.
  Since ${\cal P}_{-a}$ is a polyhedral set and ${\cal P}_{-a}= {\cal P} \cap L$, where $L:=\{b^\prime\in \Z: \, \langle b^\prime -b, a\rangle =0\}$, we have
$$
T_{{\cal P}_{-a}}(b) = T_{\cal P}(b)  \cap T_L (b) = T_{\cal P}(b) \cap a^{\perp}.
$$
Therefore,
$$
{\cal C}_{{\cal P}}(b-a) = T_{{\cal P}_{-a}}(b)= S_{b,a}^{*},
$$
which shows that (\ref{critical-expr}) holds.
The proof of this lemma is completed.
\end{proof}
\begin{lemma}\label{SRCQ-equi}
Let ${\cal A}:{\cal X} \rightarrow \mathbb{S}^p\times {\cal Z}$ be a linear operator and  $(c_1, c_2) \in \mathbb{S}^{p}\times \Z$.   Then  $v:=(v_1, v_2) \in \mathbb{S}^p\times {\cal Z}$ is a solution to  the following system of   equations
\begin{equation}\label{eq:a4xy}
\begin{array}{ll}
{\cal A}^*v &=0,\\[4pt]
\Pi'_{\mathbb{S}^{p}_-}(c_1;v_1) &=0,\\[4pt]
\Pi'_{{\cal P}}(c_2; v_2)&=0
\end{array}
\end{equation}
 if and only if
\begin{equation}\label{eq:a4xy000}
v \in \left[  {\cal A} \X + T_{\cal K}(c_+)\cap c_-^\bot\right ]^{\circ},
\end{equation}
where $c_+:=\Pi_{\cal K}(c)=(\Pi_{\mathbb{S}^{p}_-}(c_1),\Pi_{{\cal P}}(c_2))$ and $c_-=c-c_+$.
\end{lemma}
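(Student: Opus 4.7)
The plan is to reduce system \eqref{eq:a4xy} to a polar-cone statement by unpacking the three equations separately, and then recombining via the standard identity $(C_1+C_2)^{\circ}=C_1^{\circ}\cap C_2^{\circ}$ for convex cones through the origin.

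First I would exploit the product structure of ${\cal K}=\mathbb{S}^p_-\times{\cal P}$. Writing $c=(c_1,c_2)$, $c_+=(\Pi_{\mathbb{S}^p_-}(c_1),\Pi_{{\cal P}}(c_2))$, $c_-=c-c_+$, one has $T_{\cal K}(c_+)=T_{\mathbb{S}^p_-}(\Pi_{\mathbb{S}^p_-}(c_1))\times T_{{\cal P}}(\Pi_{{\cal P}}(c_2))$ and $(c_{i,-})\in N_{{\cal K}_i}((c_i)_+)$ for each $i$. Since $\langle c_{i,-},w_i\rangle\le 0$ on each tangent cone, the equality $\langle c_-,w\rangle=0$ forces each summand to vanish, so
$$T_{\cal K}(c_+)\cap c_-^{\bot}=\mathcal{C}_1\times\mathcal{C}_2,\qquad \mathcal{C}_1:=T_{\mathbb{S}^p_-}((c_1)_+)\cap (c_{1,-})^{\bot},\ \ \mathcal{C}_2:=T_{{\cal P}}((c_2)_+)\cap (c_{2,-})^{\bot}.$$
This is the splitting step; it is elementary but is the one genuine structural observation in the proof.

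Next I would interpret the second and third equations. For the semidefinite block, formula \eqref{eq:extra100} together with the positivity of the entries of $\Upsilon$ show that $\Pi'_{\mathbb{S}^p_-}(c_1;v_1)=0$ is equivalent to $P_\alpha^T v_1 P_\gamma=0$, $P_\beta^T v_1 P_\gamma=0$, $P_\gamma^T v_1 P_\gamma=0$ and $P_\beta^T v_1 P_\beta\succeq 0$; by Lemma~\ref{lem:projection-sigma-matrix}(i) this is exactly $v_1\in\mathcal{C}_1^{\circ}$. For the polyhedral block, identity \eqref{eq:a20i-1} of Lemma~\ref{lem:projection-sigmaP}(i) gives $\Pi'_{{\cal P}}(c_2;v_2)=\Pi_{\mathcal{C}_2}(v_2)$, which by the Moreau decomposition for the closed convex cone $\mathcal{C}_2$ vanishes if and only if $v_2\in\mathcal{C}_2^{\circ}$. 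The first equation ${\cal A}^*v=0$ is just $v\in\ker({\cal A}^*)=({\cal A}{\cal X})^{\bot}=({\cal A}{\cal X})^{\circ}$ since ${\cal A}{\cal X}$ is a linear subspace.

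Combining, \eqref{eq:a4xy} is equivalent to
$$v\in ({\cal A}{\cal X})^{\circ}\cap(\mathcal{C}_1\times\mathcal{C}_2)^{\circ}=\bigl({\cal A}{\cal X}+(\mathcal{C}_1\times\mathcal{C}_2)\bigr)^{\circ}=\bigl[{\cal A}{\cal X}+T_{\cal K}(c_+)\cap c_-^{\bot}\bigr]^{\circ},$$
using $(C_1+C_2)^{\circ}=C_1^{\circ}\cap C_2^{\circ}$ for two convex cones containing $0$ (no closure issue is needed because we only equate polars, not biduals). The only real obstacle is keeping the bookkeeping straight when matching the "no-$\Upsilon$-kill" form of $\Pi'_{\mathbb{S}^p_-}(c_1;\cdot)=0$ with the description of $\mathcal{C}_1^{\circ}$ from Lemma~\ref{lem:projection-sigma-matrix}(i); everything else is a direct assembly of the preceding lemmas.
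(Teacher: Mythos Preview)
Your proof is correct and follows essentially the same route as the paper: reduce each of the three equations to a polar-cone membership (via Lemma~\ref{lem:projection-sigma-matrix}(i) for the semidefinite block and Lemma~\ref{lem:projection-sigmaP}(i) for the polyhedral block) and then combine via $(C_1+C_2)^{\circ}=C_1^{\circ}\cap C_2^{\circ}$. The only difference is expository: you spell out the product splitting $T_{\cal K}(c_+)\cap c_-^{\bot}=\mathcal{C}_1\times\mathcal{C}_2$ and the Moreau argument for $\Pi_{\mathcal{C}_2}(v_2)=0\Leftrightarrow v_2\in\mathcal{C}_2^{\circ}$, whereas the paper leaves these implicit.
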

\begin{proof}
We have from   Lemma \ref{lem:projection-sigma-matrix} and (\ref{eq:criticalCone}) that
$$
\Pi'_{\mathbb{S}^{p}_-}(c_1;v_1) =0\Longleftrightarrow
v_1 \in [T_{\mathbb{S}^{p}_-}((c_1)_+)\cap (c_1)_-^{\bot}]^{\circ}.
$$
Since ${\cal P}$ is a convex polyhedron, we have from part (i) of  Lemma \ref{lem:projection-sigmaP} that
$$
\Pi'_{{\cal P}}(c_2; v_2)=0\Longleftrightarrow  v_2\in [ T_{{\cal P}}((c_2)_+)\cap (c_2)_-^{\bot}]^{\circ}.
$$
Thus,
$v$ satisfies  (\ref{eq:a4xy})    if and only if
$$
{\cal A}^* v= 0 \quad \& \quad  v\in  [ T_{\cal K}(c_+)\cap c_-^\bot]^{\circ}\ ,
$$
which is equivalent to saying that (\ref{eq:a4xy000}) holds.
The proof is completed.
\end{proof}


\begin{lemma}\label{lem:calmDirDer} Let $\bar{ x}\in\Phi$ be a stationary point of problem (\ref{1.1}) with $ (\bar y,\bar z)\in  \Lambda(\bar{ x}) \neq \emptyset$. Let the KKT mapping $G$ be defined by
(\ref{3.3}). Then   $G^{-1}$ is isolated  calm  at the origin for $(\bar x, \bar y,\bar z)$ if and only if $(d_x, d_y,d_z)=0$ for any $(d_x,d_y,d_z)\in {\cal X} \times \mathbb{S}^p \times {\cal Z}$ satisfying $ G'((\bar x, \bar y,\bar z);(d_x,d_y,d_z))=0$.
\end{lemma}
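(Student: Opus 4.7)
The proof plan is to invoke the characterization of isolated calmness via the graphical derivative (Lemma \ref{lem.1.1}) applied to $F = G^{-1}$, and then translate the resulting condition back into a statement about $G$ itself by means of the inversion identity $(\ref{inv-D})$.

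Concretely, I would first apply Lemma \ref{lem.1.1} to $F := G^{-1}$ at $x^0 = 0$ for $y^0 = (\bar x, \bar y, \bar z)$: the mapping $G^{-1}$ is isolated calm at the origin for $(\bar x, \bar y, \bar z)$ if and only if
\[
\{0\} \;=\; D G^{-1}\bigl(0\,\big|\,(\bar x,\bar y,\bar z)\bigr)(0).
\]
By the inversion formula $(\ref{inv-D})$, for any $v \in {\cal X}\times \mathbb{S}^p\times {\cal Z}$,
\[
v \in D G^{-1}\bigl(0\,\big|\,(\bar x,\bar y,\bar z)\bigr)(0) \;\Longleftrightarrow\; 0 \in DG\bigl((\bar x,\bar y,\bar z)\,\big|\,0\bigr)(v).
\]
Hence the isolated calmness of $G^{-1}$ is equivalent to the implication
\[
0 \in DG\bigl((\bar x,\bar y,\bar z)\,\big|\,0\bigr)(d_x,d_y,d_z) \;\Longrightarrow\; (d_x,d_y,d_z)=0.
\]

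The second step is to identify the graphical derivative of $G$ with its directional derivative. The mapping $G$ defined in $(\ref{3.3})$ is single-valued, locally Lipschitz (since $\nabla_x {\cal L}$, ${\cal G}$ and both projections $\Pi_{\mathbb{S}^p_-}$ and $\Pi_{\cal P}$ are globally Lipschitz) and directionally differentiable everywhere: the directional differentiability of $\Pi_{\mathbb{S}^p_-}$ is given by $(\ref{eq:extra100})$, and that of $\Pi_{\cal P}$ follows from part (i) of Lemma \ref{lem:projection-sigmaP}. For such a locally Lipschitz and directionally differentiable mapping, it is standard (and straightforward to verify from the definition of the tangent cone to the graph) that
\[
DG\bigl((\bar x,\bar y,\bar z)\,\big|\,G(\bar x,\bar y,\bar z)\bigr)(v) \;=\; \bigl\{G'\bigl((\bar x,\bar y,\bar z);\,v\bigr)\bigr\}.
\]
Since $(\bar x,\bar y,\bar z)$ satisfies the KKT system $(\ref{2.3})$, we have $G(\bar x,\bar y,\bar z)=0$. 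Substituting this identity into the equivalence above converts the condition $0 \in DG((\bar x,\bar y,\bar z)\,|\,0)(d_x,d_y,d_z)$ into the equation $G'((\bar x,\bar y,\bar z);(d_x,d_y,d_z)) = 0$, which is precisely the statement of the lemma.

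The only delicate point in the plan is verifying the singleton identification of $DG$ in the second step; this is a folklore fact for Lipschitz + directionally differentiable maps, and would be dispatched by a short verification that any sequence $t_k\downarrow 0$, $v^k\to v$, $w^k \in t_k^{-1}[G((\bar x,\bar y,\bar z)+t_k v^k)-0]$ forces $w^k \to G'((\bar x,\bar y,\bar z);v)$, using the Lipschitz estimate $\|G(\bar u+t_k v^k)-G(\bar u+t_k v)\| = O(t_k\|v^k-v\|)$. Everything else is a direct transcription of Lemma \ref{lem.1.1} through the inversion identity $(\ref{inv-D})$.
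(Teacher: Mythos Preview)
Your proposal is correct and follows essentially the same route as the paper: both arguments combine Lemma \ref{lem.1.1} with the inversion identity $(\ref{inv-D})$ and the identification of the graphical derivative of $G$ with its directional derivative, justified via the local Lipschitz continuity and directional differentiability of $G$. The paper presents these ingredients in a slightly different order (identification first, then inversion, then Lemma \ref{lem.1.1}) and explicitly notes that $G'((\bar x,\bar y,\bar z);(0,0,0))=0$ to close the ``only if'' direction, but the substance is the same.
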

\begin{proof}
 By noting that $G$ is a locally Lipschitz continuous mapping around $(\bar x,\bar y,\bar z)$ and it is directionally differentiable at $(\bar x,\bar y,\bar z)$, we have for $(d_x,d_y,d_z)\in {\cal X} \times \mathbb{S}^p \times {\cal Z}$ that
$$\begin{array}{l}
 DG((\bar x,\bar y,\bar z)|0)(d_x,d_y,d_z) \\[6pt]
  =\left \{ \displaystyle \lim_{k \rightarrow \infty} \frac{G(\bar x+t_kd_x,\bar y+t_k d_y,\bar z+t_k d_z)-G(\bar x,\bar y,\bar z)}{t_k}\mbox{ for certain } t_k \searrow 0 \right \}\\[6pt]
 =\{G'((\bar x,\bar y,\bar z); (d_x,d_y,d_z))\}.
 \end{array}
$$
Thus, from   (\ref{inv-D}), we have for any $(d_x,d_y,d_z)\in {\cal X} \times \mathbb{S}^p \times {\cal Z}$ that
$$
(d_x,d_y,d_z)\in DG^{-1}(0|(\bar x,\bar y,\bar z))(0) \Longleftrightarrow
G'((\bar x, \bar y,\bar z);(d_x,d_y,d_z))=0,
$$
which, together with Lemma \ref{lem.1.1} and the fact that $G'((\bar x, \bar y,\bar z);(0,0,0))=0$, implies  that $G^{-1}$ is isolated  calm  at the origin for $(\bar x, \bar y,\bar z)$ if and only if
$$G'((\bar x, \bar y,\bar z);(d_x,d_y,d_z))=0 \Longrightarrow
 (d_x,d_y,d_z)=0.
$$
This completes the proof.
\end{proof}
\begin{theorem}\label{Theorem 3.1}
Let $\bar{ x}\in\Phi$ be a stationary point of problem (\ref{1.1}) with $ (\bar y,\bar z)\in  \Lambda(\bar{ x}) \neq \emptyset$. Then we have the following results:
 \begin{description}
 \item[(i)] If the   second-order sufficient optimality condition   for  problem (\ref{1.1}) holds at $\bar{ x}$ and the SRCQ (\ref{2.5}) holds at $\bar x$ with respect to $(\bar y,\bar z) $, then $G^{-1}$ is isolated  calm  at the origin for $(\bar x, \bar y,\bar z)$.
 \item[(ii)] If
$G^{-1}$ is isolated  calm  at the origin for $(\bar x, \bar y,\bar z)$,   then the SRCQ (\ref{2.5}) holds at $\bar x$ with respect to $(\bar y,\bar z)$.

         \item[(iii)]  If $G^{-1}$ is isolated  calm  at the origin for $(\bar x, \bar y,\bar z)$
         and  the quadratic form $$q:(d_x,d_x) \rightarrow \left\langle d_x, \nabla_{xx}^2{\cal L}(\bar{ x};\bar y,\bar z)d_x \right\rangle +2\left \langle \bar y, {\rm D}\phi(\bar x)d_x \left[-\phi(\bar x)\right]^\dagger {\rm D}\phi(\bar x)d_x \right\rangle$$
            satisfies $$q (d_x, d_x)  \ge 0,\   \forall\,   d_x \in {\cal C}(\bar x)\quad   \& \quad
            q(d_x, d_x)=0, \ d_x \in {\cal C}(\bar x)\Longrightarrow  \nabla_{xx}^2{\cal L}(\bar{ x};\bar y,\bar z)d_x=0,$$
            then the second-order sufficient optimality condition for  problem (\ref{1.1}) holds at $\bar{ x}$.
         \end{description}
\end{theorem}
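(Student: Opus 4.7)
The plan is to apply Lemma \ref{lem:calmDirDer}, which reduces isolated calmness of $G^{-1}$ at the origin for $(\bar x,\bar y,\bar z)$ to the injectivity of $G'((\bar x,\bar y,\bar z);\cdot)$. Differentiating (\ref{3.3}) directionally and using $(\bar y,\bar z)\in N_{\cal K}({\cal G}(\bar x))$, the kernel equation $G'((\bar x,\bar y,\bar z);(d_x,d_y,d_z))=0$ splits into the first-order identity
\[
\nabla_{xx}^2{\cal L}(\bar x;\bar y,\bar z)\,d_x + D{\cal G}(\bar x)^*(d_y,d_z)=0,
\]
the SDP projection identity $D\phi(\bar x)d_x=\Pi'_{\mathbb{S}^p_-}(\phi(\bar x)+\bar y;\,D\phi(\bar x)d_x+d_y)$, and the polyhedral projection identity $D\psi(\bar x)d_x=\Pi'_{\cal P}(\psi(\bar x)+\bar z;\,D\psi(\bar x)d_x+d_z)$. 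These fit exactly the hypotheses of Lemma \ref{lem:projection-sigma-matrix}(ii) (with $\Delta A=D\phi(\bar x)d_x$, $\Delta B=d_y$) and Lemma \ref{lem:projection-sigmaP}(i) (with $a=D\psi(\bar x)d_x$, $b=d_z$), respectively.

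Extracting their consequences, I obtain $D\phi(\bar x)d_x\in {\cal C}_{\mathbb{S}^p_-}(\phi(\bar x)+\bar y)$, $D\psi(\bar x)d_x\in T_{\cal P}(\psi(\bar x))\cap\bar z^\perp$, and hence $d_x\in{\cal C}(\bar x)$ after using stationarity to get $Df(\bar x)d_x=0$. Lemma \ref{lem:projection-sigma-matrix}(ii) also yields the quadratic identity $\langle D\phi(\bar x)d_x,d_y\rangle=2\langle\bar y, D\phi(\bar x)d_x[-\phi(\bar x)]^\dagger D\phi(\bar x)d_x\rangle$, while cone-structure gives $\langle D\psi(\bar x)d_x,d_z\rangle=0$. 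Pairing the first-order identity with $d_x$ and substituting then produces $q(d_x,d_x)=0$, where $q$ is the form in Definition \ref{Definition3.1}. This identity is the bridge between the kernel of $G'((\bar x,\bar y,\bar z);\cdot)$ and the conditions in SOSC and SRCQ.

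With the bridge in place, each part follows quickly. For part (i), SRCQ forces $\Lambda(\bar x)=\{(\bar y,\bar z)\}$ by \cite[Proposition 4.50]{BS2000}, so SOSC collapses to $q(d,d)>0$ on ${\cal C}(\bar x)\setminus\{0\}$, forcing $d_x=0$; the residual equations $D{\cal G}(\bar x)^*(d_y,d_z)=0$ and $\Pi'_{\cal K}({\cal G}(\bar x)+(\bar y,\bar z);(d_y,d_z))=0$ then combine via Lemma \ref{SRCQ-equi} into $(d_y,d_z)\in[D{\cal G}(\bar x)\X+T_{\cal K}({\cal G}(\bar x))\cap(\bar y,\bar z)^\perp]^\circ=\{0\}$ under SRCQ. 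For part (ii), I would reverse this: any $v$ in the polar above furnishes a kernel element $(0,v)$ of $G'((\bar x,\bar y,\bar z);\cdot)$ by Lemma \ref{SRCQ-equi}, so isolated calmness forces $v=0$, hence SRCQ. For part (iii), part (ii) already yields SRCQ, reducing SOSC to $q>0$ on ${\cal C}(\bar x)\setminus\{0\}$; assuming for contradiction $0\ne d_x\in{\cal C}(\bar x)$ with $q(d_x,d_x)=0$, hypothesis (c) gives $\nabla_{xx}^2{\cal L}(\bar x;\bar y,\bar z)d_x=0$, and comparing $q(d_x,d_x)=0$ with the explicit non-negative trace expression for its second summand (arising from the spectral decomposition (\ref{eq:spectralDecomposition})) forces $(P_\alpha^T D\phi(\bar x)d_x P_\gamma)_{ij}=0$ for all $(i,j)\in\alpha\times\gamma$. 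Together with the critical-cone relations $P_\alpha^T D\phi(\bar x)d_x[P_\alpha\ P_\beta]=0$, the formulas (\ref{eq:extra100}) and (\ref{eq:a20i-1}) show that $(d_y,d_z)=(0,0)$ already satisfies all three kernel equations, so $(d_x,0,0)$ is a nontrivial zero of $G'((\bar x,\bar y,\bar z);\cdot)$, contradicting isolated calmness.

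The main obstacle is the block-by-block bookkeeping in the second paragraph: matching the directional-derivative formula (\ref{eq:extra100}) against Lemma \ref{lem:projection-sigma-matrix}(ii) to extract simultaneously the critical-cone membership and the exact quadratic identity tying $\langle d_x,\nabla_{xx}^2{\cal L}d_x\rangle$ to the curvature term of $q$. The subtlety specific to part (iii) is recognizing that the combination $\nabla_{xx}^2{\cal L}(\bar x;\bar y,\bar z)d_x=0$ together with $q(d_x,d_x)=0$ pins down precisely the $(\alpha,\gamma)$-block of $D\phi(\bar x)d_x$ that the critical-cone condition alone does not control, and this is exactly the block whose vanishing is needed for the simple choice $(d_y,d_z)=(0,0)$ to be admissible.
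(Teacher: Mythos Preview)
Your proposal is correct and follows essentially the same route as the paper: reduce via Lemma~\ref{lem:calmDirDer} to the kernel of $G'((\bar x,\bar y,\bar z);\cdot)$, unpack the two projection identities through Lemma~\ref{lem:projection-sigma-matrix}(ii) and Lemma~\ref{lem:projection-sigmaP}(i) to land $d_x$ in ${\cal C}(\bar x)$ with $q(d_x,d_x)=0$, then use SOSC for $d_x=0$ and Lemma~\ref{SRCQ-equi} with SRCQ for $(d_y,d_z)=0$ in part~(i), reverse the polar argument for part~(ii), and in part~(iii) produce the nontrivial kernel element $(d_x,0,0)$. Your treatment of~(iii) is in fact a bit more explicit than the paper's: where the paper simply cites ``the definition of ${\cal C}(\bar x)$ and Lemmas~\ref{lem:projection-sigma-matrix} and~\ref{lem:projection-sigmaP}'' to obtain (\ref{eq:a2pp}), you spell out that the vanishing of the curvature term $\langle \bar y, D\phi(\bar x)d_x[-\phi(\bar x)]^\dagger D\phi(\bar x)d_x\rangle$ forces $P_\alpha^T D\phi(\bar x)d_x P_\gamma=0$, which is precisely the block not controlled by ${\cal C}_{\mathbb{S}^p_-}$-membership alone and is needed for $D\phi(\bar x)d_x$ to be a fixed point of $\Pi'_{\mathbb{S}^p_-}(\phi(\bar x)+\bar y;\cdot)$.
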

\begin{proof} Since $ (\bar y,\bar z)\in  \Lambda(\bar{ x}) $, we know $\bar{y} \in N_{\mathbb{S}^p_{-}}(\phi(\bar x))$ and $\bar{z} \in N_{\cal P} (\psi(\bar x))$. Without loss of generality, we can assume that $A:=\bar y$, $B: =\phi(\bar{x})$ and $C := B+A$ have the spectral decompositions as in (\ref{eq:spectralDecomposition}).

We first prove part (i).
%
Let $(d_x,d_y,d_z)\in {\cal X} \times \mathbb{S}^p \times {\cal Z}$ be arbitrarily chosen such that  $G'((\bar x, \bar y,\bar z);(d_x,d_y,d_z))=0$.
Since the SRCQ (\ref{2.5}) holds at $\bar x$ with respect to $(\bar y,\bar z) $, we have from  \cite[Proposition 4.47]{BS2000} that the set of Lagrange multipliers of problem (\ref{1.1})  at $\bar x$ is a singleton, namely $\Lambda (\bar x)=\{(\bar y,\bar z)\}$. In this case, we can write the critical cone ${\cal C}(\bar x)$ as
$${\cal C}(\bar x)={\cal C}_1(\bar x)\cap  {\cal C}_2(\bar x),
$$
where
$$
\begin{array}{l}
{\cal C}_1(\bar x)=\{ d_x\in {\cal X}:\, {\rm D}\phi (\bar x)d_x \in T_{\mathbb{S}^{p}_-}(\phi (\bar x)),\ \langle \bar y,{\rm D}\phi (\bar x)d_x\rangle=0\},\\[4pt]
{\cal C}_2(\bar x)=\{ d_x\in {\cal X}:\, {\rm  D}\psi (\bar x)d_x \in T_{{\cal P}}(\psi(\bar x)), \ \langle \bar z,{\rm D}\psi (\bar x)d_x\rangle=0\}.\\[4pt]
  \end{array}
$$
Since  $G'((\bar x, \bar y,\bar z);(d_x,d_y,d_z))=0$,   we have
\begin{equation}\label{eq:a1}
\begin{array}{lr}
\nabla^2_{xx}{\cal L}(\bar x; \bar y,\bar z) d_x +{\rm D}{\cal G}(\bar x)^* (d_y,d_z) &=0,\\[4pt]
-{\rm D}{\cal G}(\bar x)d_x+\Pi'_{\cal K}({\cal G}(\bar x)+(\bar y,\bar z); {\rm D}{\cal G}(\bar x)d_x+(d_y,d_z))&=0.
\end{array}
\end{equation}
The second equation in  (\ref{eq:a1}) can be split into
$$\begin{array}{ll}
{\rm D}\phi(\bar x)d_x-\Pi'_{\mathbb{S}^{p}_-}(\phi(\bar x)+\bar y; {\rm D}\phi(\bar x)d_x+d_{y})&=0,\\
{\rm D}\psi(\bar x)d_x -\Pi'_{\cal P}(\psi(\bar x)+\bar z; {\rm D}\psi(\bar x)d_x+d_{z})&=0.
\end{array}
$$
Thus, we know from part (ii) of  Lemma \ref{lem:projection-sigma-matrix}  that
  $${\rm D}\phi(\bar x)d_x\in T_{\mathbb{S}^{p}_-}(\phi(\bar x))\cap {\bar y}^{\bot} \ \ \& \ \ \langle {\rm D}\phi(\bar x)d_x, d_{y}\rangle =2 \langle \bar y, {\rm D}\phi(\bar x)d_x \left[-\phi(\bar x)\right]^\dagger {\rm D}\phi(\bar x)d_x \rangle$$
and  from (i) of Lemma \ref{lem:projection-sigmaP} that
 $${\rm D}\psi(\bar x)d_x\in T_{{\cal P}}(\psi(\bar x))\cap \bar z^{\bot} \ \ \& \ \
 \langle d_{z}, {\rm D}\psi(\bar x)d_x \rangle=0.$$
Therefore, $d_x \in {\cal C}(\bar x)$. By taking the inner product between $d_x$ and both sides of the first equation in (\ref{eq:a1}), we obtain
$$
\left\langle d_x,\nabla^2_{xx}{\cal L}(\bar x, \bar y,\bar z) d_x\right\rangle +\left \langle d_x, {\rm D}{\cal G}(\bar x)^* (d_y,d_z) \right\rangle=0
$$
and thus
$$\left \langle d_x,\nabla^2_{xx}{\cal L}(\bar x, \bar y,\bar z) d_x\right\rangle +2\left\langle \bar y,{\rm D}\phi(\bar x)d_x \left[-\phi(\bar x)\right]^\dagger {\rm D}\phi(\bar x)d_x\right\rangle=0.
$$
It then follows from the second-order sufficient optimality condition    for  problem (\ref{1.1})   at $\bar x$  that $d_x=0$. Hence  (\ref{eq:a1}) is reduced to
$$\begin{array}{ll}
{\rm D}{\cal G}(\bar x)^*(d_y,d_z) &=0,\\[4pt]
\Pi'_{\mathbb{S}^{p}_-}(\phi(\bar x)+\bar y; d_{y})&=0,\\[4pt]
\Pi'_{{\cal P}}(\psi(\bar x)+\bar z; d_{z})&=0.
\end{array}
$$
In view of Lemma \ref{SRCQ-equi}, we obtain
$$
(d_y,d_z) \in \left[  {\rm D} {\cal G} (\bar{ x}) \X + T_{\cal K}({\cal G}(\bar{ x}))\cap (\bar{ y},\bar z)^\bot\right ]^{\circ},
$$
which  implies  $(d_y,d_z)=0$  from the assumed SRCQ (\ref{2.5}). Therefore,  $(d_x,d_y,d_z)=0$. Then, we know from Lemma \ref{lem:calmDirDer} that   $G^{-1}$ is isolated  calm  at the origin for $(\bar x, \bar y,\bar z)$.

 Now we prove   part (ii).   Suppose that the SRCQ (\ref{2.5}) does not hold at $\bar x$ for  $(\bar y,\bar z) \in \Lambda  (\bar x)$, namely
$$\begin{array}{l}
\Gamma:={\rm D} {\cal G} (\bar{ x}) \X + T_{\cal K}({\cal G}(\bar{ x}))\cap(\bar{ y},\bar z)^\bot \ne {\cal Y}.
\end{array}
$$
Then  there exists  $0\ne (\hat y,\hat{z}) \in \mathbb{S}^p\times {\cal Z}$ such that $(\hat y,\hat{z}) \in \Gamma^\circ$ or equivalently
$$
0 \ne (\hat y,\hat z) \in \left({\rm D} {\cal G} (\bar{ x})^*\right)^{\perp}\cap \left [ ( T_{\mathbb{S}^{p}_-}(\phi(\bar{ x}))\cap\bar{ y}^\bot )^{\circ} \times( T_{\cal P}(\psi(\bar{ x}))\cap\bar{ z}^\bot )^{\circ}\right ].
$$
 Then we have from Lemma \ref{SRCQ-equi} that
 $$\begin{array}{ll}
 {\rm D}{\cal G}(\bar x)^*(\hat y,\hat z) &=0,\\[4pt]
 \Pi'_{\mathbb{S}^{p}_-}(\phi(\bar x)+\bar y; \hat y) &=0,\\[4pt]
\Pi'_{\cal P}(\psi(\bar x)+\bar z;\hat z)&=0,\\[4pt]
\end{array}
  $$
 which imply
 $$
 G'((\bar x, \bar y,\bar z);(0,\hat y,\hat z))=0, 
 $$
 that is
$$
 0 \in DG((\bar x, \bar y,\bar z)|0)(0,\hat y,\hat z).
$$
  Since $G^{-1}$  is assumed to be isolated  calm  at the origin  for $(\bar x, \bar y,\bar z)$, we obtain from Lemma \ref{lem.1.1}  that $(  \hat y,\hat z)=0$. This   contradiction shows that the assertion in part (ii) is true.

 Finally, we prove  part (iii) by contradiction. Suppose that the second-order sufficient optimality condition for problem (\ref{1.1}) does not hold at $\bar{ x}$. Since $G^{-1}$ is assumed to be isolated  calm  at the origin for $(\bar x, \bar y,\bar z)$, we have $\Lambda(\bar x) =\{(\bar y, \bar z)\}$. Thus,
 there exists a vector $0\ne d_x \in {\cal C}(\bar x)$ satisfying $q(d_x,d_x)=0$.
 We then  know from the conditions  given  in part (iii) that $\nabla_{xx}^2 {\cal L}(\bar x; \bar y,\bar z)d_x=0$ and thus  $\big \langle \bar y, {\rm D}\phi(\bar x)d_x \left[-\phi(\bar x)\right]^\dagger {\rm D}\phi(\bar x)d_x \big \rangle=0$.
Moreover, from the definition of ${\cal C}(\bar x)$ and  Lemmas \ref{lem:projection-sigma-matrix} and \ref{lem:projection-sigmaP},  
 we have
  \begin{equation}\label{eq:a2pp}
\begin{array}{ll}
{\rm D}\phi(\bar x)d_x -\Pi'_{\mathbb{S}^{p}_-}(\phi(\bar x)+\bar y; {\rm D}\phi(\bar x)d_x)&=0,\\[4pt]
{\rm D}\psi(\bar x)d_x-\Pi'_{\cal P}(\psi(\bar x)+\bar z; {\rm D}\psi(\bar x)d_x)&=0.
\end{array}
\end{equation}
 By using $\nabla_{xx}^2 {\cal L}(\bar x; \bar y,\bar z)d_x=0$, (\ref{eq:a2pp}) and the expression of the directional derivative of $G$ at $(\bar x, \bar y,\bar z)$, we get  $G'((\bar x, \bar y,\bar z);(d_x,0,0))=0$ with $d_x \ne 0$. Then, by Lemma \ref{lem:calmDirDer}, we arrive at a contradiction with
   the isolated calmness of  $G^{-1}$   at the origin for $(\bar x, \bar y,\bar z)$. Therefore, we must have $q(d_x,d_x)>0$ for $d_x \in {\cal C}(\bar x)\setminus \{0\}$. That is,   the second-order sufficient optimality condition for problem (\ref{1.1}) holds at $\bar x$. The proof is completed.
\end{proof}

Based on Theorem \ref{Theorem 3.1}, for linearly  constrained convex optimization problems, we obtain the following complete characterization on the isolated calmness of $G^{-1}$.
\begin{corollary}\label{Coro-convex-case}
Let $f$ be a twice continuously differentiable convex function, ${\cal G}$ be an affine mapping and $\bar x$ be a minimizer to  problem (\ref{1.1}) with $\Lambda (\bar x)\ne \emptyset$.  Then $G^{-1}$ is isolated calm  at the origin for   $(\bar x, \bar y,\bar z)$ with $(\bar y,\bar z) \in \Lambda (\bar x)$ if and only if  the second-order sufficient optimality condition for  problem (\ref{1.1}) holds at $\bar x$ and  the SRCQ (\ref{2.5}) holds at $\bar x$   for  $(\bar y,\bar z)\in \Lambda(\bar{ x})$.

\end{corollary}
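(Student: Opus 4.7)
The plan is to derive the corollary from Theorem \ref{Theorem 3.1} by exploiting convexity of $f$ and affinity of ${\cal G}$ to verify the extra hypotheses that part (iii) requires.

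The ``if'' direction is immediate: apply part (i) of Theorem \ref{Theorem 3.1} directly. For the ``only if'' direction, the SRCQ assertion follows instantly from part (ii). The real work is to deduce the second-order sufficient optimality condition from the isolated calmness, for which I would invoke part (iii). That means I must check that under the present convexity assumptions the quadratic form
\[
q(d_x,d_x):=\langle d_x,\nabla^2_{xx}{\cal L}(\bar x;\bar y,\bar z)d_x\rangle+2\langle \bar y,{\rm D}\phi(\bar x)d_x[-\phi(\bar x)]^{\dagger}{\rm D}\phi(\bar x)d_x\rangle
\]
is nonnegative on ${\cal C}(\bar x)$ and that its zeros in ${\cal C}(\bar x)$ are automatically annihilated by $\nabla^2_{xx}{\cal L}(\bar x;\bar y,\bar z)$.

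First I would note that, since ${\cal G}$ is affine, $\nabla^2_{xx}{\cal L}(\bar x;\bar y,\bar z)=\nabla^2 f(\bar x)$, which is positive semi-definite because $f$ is convex. This immediately gives the required implication $q(d_x,d_x)=0\Rightarrow \nabla^2_{xx}{\cal L}(\bar x;\bar y,\bar z)d_x=0$, provided the ``sigma term'' is also nonnegative: from $\langle d_x,\nabla^2 f(\bar x)d_x\rangle\ge 0$ and a PSD matrix $M\succeq 0$, $\langle d_x,Md_x\rangle=0$ forces $Md_x=0$.

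The key structural fact I would then establish is the nonnegativity of the sigma term. Using the spectral decomposition (\ref{eq:spectralDecomposition}) of $\bar y$ and $\phi(\bar x)$ with the common orthogonal matrix $P=[P_\alpha\,P_\beta\,P_\gamma]$, set $M:={\rm D}\phi(\bar x)d_x$ and write $\widetilde M:=P^TMP$. Because $\bar y$ is supported on the $(\alpha,\alpha)$-block with positive diagonal $\Lambda_\alpha\succ 0$ and $[-\phi(\bar x)]^{\dagger}$ is supported on the $(\gamma,\gamma)$-block with positive diagonal $(-\Lambda_\gamma)^{-1}\succ 0$, a direct trace computation gives
\[
\langle \bar y,M[-\phi(\bar x)]^{\dagger}M\rangle=\sum_{i\in\alpha,\,k\in\gamma}\frac{\lambda_i}{-\lambda_k}\,\widetilde M_{ik}^{2}\ge 0.
\]
Combined with $\langle d_x,\nabla^2 f(\bar x)d_x\rangle\ge 0$, this yields $q(d_x,d_x)\ge 0$ for every $d_x\in{\cal C}(\bar x)$. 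Thus both hypotheses in part (iii) of Theorem \ref{Theorem 3.1} are verified, and the second-order sufficient optimality condition follows, completing the proof.

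The main obstacle, though essentially mechanical once the spectral picture is in place, is the bookkeeping in the sigma-term computation: one must align the Arnold-Bonnans-Cominetti-Shapiro spectral decomposition with the PSD structure of $\bar y$ and $[-\phi(\bar x)]^{\dagger}$ to see that only the $(\alpha,\gamma)$ cross-block of $\widetilde M$ contributes and that its coefficient is a product of two positive quantities. Everything else is a direct appeal to Theorem \ref{Theorem 3.1}.
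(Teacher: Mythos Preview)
Your proposal is correct and follows exactly the route the paper intends: the paper states the corollary as an immediate consequence of Theorem \ref{Theorem 3.1} without giving an explicit proof, and you have correctly filled in the one nontrivial detail, namely that under convexity of $f$ and affinity of ${\cal G}$ both summands of $q$ are separately nonnegative (your spectral computation of the sigma term is right), so that the hypotheses of part (iii) are satisfied.
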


\section{Convex composite quadratic semi-definite programming}\label{section:QSDP}
\setcounter{equation}{0}
 In this section  we shall further study the isolated calmness for the following important convex composite quadratic  SDP:
\begin{equation}\label{eq:QSDP0}
\begin{array}{ll}
\min & \displaystyle \frac{1}{2}\langle x, {\cal Q} x\rangle+\langle c,x\rangle\\
{\rm s.t.} & {\cal A}x =b,\ \ x\in \mathbb{S}^p_+\cap \mathcal{P},
\end{array}
\end{equation}
where $c \in \mathbb{S}^p$,   $b \in \Re^m$, ${\mathcal Q}: \mathbb{S}^p \to \mathbb{S}^p$ is a self-adjoint positive semi-definite linear operator,  ${\cal A}: \mathbb{S}^p \to \Re^m$ is a linear operator  and $\mathcal {P} $ is a simple  nonempty convex polyhedral set in $\mathbb{S}^p$. As in Subsection \ref{ccqcp}, by introducing an additional variable $u\in \mathbb{S}^p$, we can rewrite problem (\ref{eq:QSDP0})  equivalently as
\begin{equation}\label{eq:QSDP}
\begin{array}{ll}
\min & \displaystyle \frac{1}{2}\langle x, {\cal Q} x\rangle+\langle c,x\rangle\\
{\rm s.t.} & {\cal A}x =b,\ \ x-u=0,\ \ x\in \mathbb{S}^p_+ , \ \ u\in {\cal P} .
\end{array}
\end{equation}
%
%
 Suppose that $(\bar{x}, \bar u)\in \mathbb{S}^p_+ \times {\cal P}$ is an optimal solution to the convex optimization  problem (\ref{eq:QSDP}). Note that  $\bar u= \bar x$. Let $\Lambda_P(\bar{x}, \bar {u})$, which may be an empty set, denote the  set of Lagrange multipliers $(s,y,z,v) \in \mathbb{S}^p\times \Re^m \times \mathbb{S}^p\times \mathbb{S}^p$  for problem (\ref{eq:QSDP}) at $(\bar{x}, \bar{u})$ such that $(\bar{x},\bar{u}, s,y,z,v)$ satisfies the following  KKT system
\[\label{eq:LS-P}
\left\{
\begin{array}{l}{\cal Q}\bar x+c-{\cal A}^*y-z-s=0, \ z+ v=0,
\\
b-{\cal A} \bar{x}=0, \  \bar{u} -\bar{x}=0, \  s \in  N_{\mathbb{S}^{p}_-}(-\bar x), \   v\in    N_{{\cal P}}(\bar u)\, .
\end{array}\right.\]
The KKT  mapping $G_{P}$,
associated with problem (\ref{eq:QSDP}), for any $(x,u,s, y,z,v)\in \mathbb{S}^p \times\mathbb{S}^p\times \mathbb{S}^p\times \Re^m\times \mathbb{S}^p\times \mathbb{S}^p$ is given by
\begin{equation}\label{3.3-p}
G_P(x,u,s,y,z,v) :=
\left[
\begin{array}{c}
{\cal Q} x-{\cal A}^*y-z-s+c
\\
z+v
\\
  x+\Pi_{{\mathbb{S}}^p_{-} }(-x+s)\\
  {\cal A} x-b\\
x-u
\\
- u+\Pi_{{\cal P} }(u+v)
\end{array}
\right]  .
\end{equation}
We also define the  reduced  KKT  mapping $F_{P}$,
associated with problem (\ref{eq:QSDP}), as follows: for any $(x,u, y,z)\in   \mathbb{S}^p\times \mathbb{S}^p\times \Re^m\times \mathbb{S}^p$,
\begin{equation}\label{3.3-pSim}
F_P(x,u,y,z) :=
\left[
\begin{array}{c}
  x+\Pi_{{\mathbb{S}}^p_{-} }(-x+{\cal Q} x-{\cal A}^*y-z +c)\\
- u+\Pi_{{\cal P} }(u-z)
\\
  {\cal A} x-b\\
  x-u
\end{array}
\right]  .
\end{equation}
By using Lemma \ref{lem:calmDirDer}, we can easily obtain the following equivalence on the isolated calmness property of $(G_P)^{-1}$ and $(F_P)^{-1}$.
\begin{proposition}\label{propCalm5.1}
Let   $(\bar{x},\bar{u}, \bar{s},\bar{y},\bar{z}, \bar{v})\in \mathbb{S}^p\times \mathbb{S}^p\times \mathbb{S}^p\times \Re^m \times  \mathbb{S}^p\times    \mathbb{S}^p$ be a solution to the KKT system (\ref{eq:LS-P}). Then    $(G_P)^{-1}$  is isolated calm at the origin with respect to $(\bar x, \bar u, \bar s, \bar y,   \bar z, \bar{v})$ if and only if $(F_P)^{-1}$  is isolated calm at the origin with respect to $(\bar{x}, \bar u, \bar y,\bar{z})$.
\end{proposition}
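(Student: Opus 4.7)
The plan is to apply Lemma \ref{lem:calmDirDer} to both mappings and to verify that the directional-derivative kernel conditions are equivalent. Concretely, by Lemma \ref{lem:calmDirDer}, $(G_P)^{-1}$ is isolated calm at the origin for $(\bar x,\bar u,\bar s,\bar y,\bar z,\bar v)$ if and only if the only solution of
\[
G_P'\bigl((\bar x,\bar u,\bar s,\bar y,\bar z,\bar v);(d_x,d_u,d_s,d_y,d_z,d_v)\bigr)=0
\]
is the zero vector, and similarly $(F_P)^{-1}$ is isolated calm at the origin for $(\bar x,\bar u,\bar y,\bar z)$ if and only if the only solution of $F_P'((\bar x,\bar u,\bar y,\bar z);(d_x,d_u,d_y,d_z))=0$ is zero. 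So the whole task reduces to establishing a bijection between the solution sets of these two directional-derivative systems that is identity on the common block $(d_x,d_u,d_y,d_z)$.

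Next I would write out both systems explicitly. From the KKT system \eqref{eq:LS-P} we have $\bar s={\cal Q}\bar x-{\cal A}^*\bar y-\bar z+c$ and $\bar v=-\bar z$, so the ``base points'' inside the directional derivatives coincide: $-\bar x+\bar s$ appears in both, and $\bar u+\bar v=\bar u-\bar z$ appears in both. Writing out $G_P'$ from \eqref{3.3-p}, the first two component equations are the linear relations
\[
{\cal Q}d_x-{\cal A}^*d_y-d_z-d_s=0,\qquad d_z+d_v=0,
\]
which determine $d_s$ and $d_v$ uniquely as $d_s={\cal Q}d_x-{\cal A}^*d_y-d_z$ and $d_v=-d_z$. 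Substituting these into the remaining four equations of $G_P'=0$, i.e.\ into
\begin{align*}
&d_x+\Pi'_{\mathbb{S}^p_-}(-\bar x+\bar s;-d_x+d_s)=0,\\
&{\cal A}d_x=0,\quad d_x-d_u=0,\\
&-d_u+\Pi'_{\cal P}(\bar u+\bar v;d_u+d_v)=0,
\end{align*}
yields exactly the four equations comprising $F_P'((\bar x,\bar u,\bar y,\bar z);(d_x,d_u,d_y,d_z))=0$ read off from \eqref{3.3-pSim}.

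This correspondence gives a bijection between the zero sets of the two directional-derivative systems: every $(d_x,d_u,d_s,d_y,d_z,d_v)$ with $G_P'(\cdot)=0$ projects to a $(d_x,d_u,d_y,d_z)$ with $F_P'(\cdot)=0$, and conversely every solution of $F_P'(\cdot)=0$ lifts uniquely by setting $d_s:={\cal Q}d_x-{\cal A}^*d_y-d_z$, $d_v:=-d_z$. Under this bijection, the full tuple vanishes if and only if its projection vanishes (since the recipe for $d_s,d_v$ in terms of $(d_x,d_y,d_z)$ sends zero to zero and is injective in the $(d_s,d_v)$ slots). Invoking Lemma \ref{lem:calmDirDer} on both sides then gives the desired equivalence.

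I do not anticipate any real obstacle: the algebraic reduction is the entire content, and both the projection $\Pi_{\mathbb{S}^p_-}$ and $\Pi_{\cal P}$ are directionally differentiable (recall \eqref{eq:extra100} and Lemma \ref{lem:projection-sigmaP}), which is all that is needed for the directional derivatives of $G_P$ and $F_P$ to exist and to take the form used above.
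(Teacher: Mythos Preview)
Your proposal is correct and follows exactly the approach the paper intends: the paper simply states that the equivalence follows easily from Lemma~\ref{lem:calmDirDer}, and you have supplied precisely the straightforward algebraic details---solving the first two linear components of $G_P'=0$ for $(d_s,d_v)$ and observing that the remaining four equations coincide with $F_P'=0$ once the KKT relations $\bar s={\cal Q}\bar x-{\cal A}^*\bar y-\bar z+c$ and $\bar v=-\bar z$ are used to match the base points.
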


The critical cone of  problem (\ref{eq:QSDP}) at $({\bar x}, \bar{u})$ is given  by
$$
{\cal C}(\bar x, \bar{u})=\{(d_x,d_u) \in {\mathbb S}^p\times {\mathbb S}^p:\, {\cal A}d_x=0,\, d_u-d_x=0, \, d_x \in T_{\mathbb{S}^p_+}(\bar x) , \, d_u \in T_{\cal P}(\bar u),\, \langle {\cal Q}\overline x+c, d_x\rangle=0 \}.
$$
If $\Lambda_P(\bar{x}, \bar{u}) \neq \emptyset$, then for any $(s,y,z,v) \in \Lambda_P(\bar{x}, \bar{u})$,
\[\label{eq:Cst}
{\cal C}(\bar x, \bar{u})=\{(d_x,d_u) \in {\mathbb S}^p\times {\mathbb S}^p:\, {\cal A}d_x=0,\, d_u-d_x=0, \, d_x \in {\cal C}_{\mathbb{S}^p_+}(\bar x-s) , \, d_u \in {\cal C}_{\cal P}(\bar u-z) \}.
\]


The Lagrange dual of problem (\ref{eq:QSDP}) takes the form of
\begin{equation}\label{eq:QSDPD-1}
\begin{array}{ll}
\max &\displaystyle \inf_{x\in \mathbb{S}^p} \left\{\frac{1}{2}\langle x, {\cal Q} x\rangle +\langle v , x\rangle \right\}+\langle b, y\rangle-\delta^*_{\cal P}(-z)\\
{\rm s.t.} & s+ {\cal A}^*y+v+z=c,\ \  s\in \mathbb{S}^p_+ ,
\end{array}
\end{equation}
which is equivalent to
\begin{equation}\label{eq:QSDPD0}
\begin{array}{ll}
\max & \displaystyle\langle b, y\rangle- \frac{1}{2}\langle w, {\cal Q} w\rangle  -\delta^*_{\cal P}(-z)\\
{\rm s.t.} & s+{\cal A}^*y- {\cal Q}w+z=c,\\[4pt]
& s\in \mathbb{S}^p_+ ,\  w \in {\cal W},
\end{array}
\end{equation}
where ${\cal W}$ is any linear subspace in $\mathbb{S}^p$ that  contains
${\rm Range\,} {\cal Q}$,  e.g., ${\cal W}=\mathbb{S}^p$ or ${\cal W}={\rm Range\,} {\cal Q}$. 
%
By introducing an additional variable $t$, we can reformulate problem (\ref{eq:QSDPD0}) equivalently as
\begin{equation}\label{eq:QSDPD}
\begin{array}{ll}
\max &\langle b, y\rangle -\displaystyle \frac{1}{2}\langle w, {\cal Q} w\rangle  -t\\
{\rm s.t.} & s+{\cal A}^*y-{\cal Q}w+z=c,\\[4pt]
& s\in \mathbb{S}^p_+ ,\  w \in {\cal W}, \ (z,t) \in {\rm epi\,}\theta\, ,
\end{array}
\end{equation}
where
$$
\theta(z) :=\delta^*_{\cal P}(-z), \quad \forall\, z\in \mathbb{S}^p.
$$
Let $(\bar s, \bar y, \bar w, \bar z)\in \mathbb{S}^p\times \Re^m \times {\cal W}  \times \mathbb{S}^p$ be an optimal solution to problem (\ref{eq:QSDPD0}). Then, obviously, $(\bar s, \bar y, \bar w, \bar z, \theta(\bar z))$ is an optimal solution to problem (\ref{eq:QSDPD}).
We use  $\Lambda_D(\bar s, \bar y, \bar w, \bar z )$ to denote the corresponding   set of Lagrange multipliers   for problem (\ref{eq:QSDPD0}) at $( \bar s, \bar y, \bar w, \bar z)$, that is $x\in \Lambda_D(\bar s, \bar y, \bar w, \bar z )$ if and only if   $(\bar s, \bar y, \bar w, \bar z, {x})$ satisfies the following KKT system
\[\label{eq:LS-D}
 0 \in x+N_{\mathbb{S}^{p}_+}(\bar s), \ {\cal A}x -b=0,  \  {\cal Q}  \bar w -{\cal Q}  x=0,\  0 \in x+   \partial \theta(\bar z), \ c-{\bar s}-{\cal A}^*\bar{y}+{\cal Q}\bar {w}-\bar{z}=0.
\]
%
%
%
%
Thus, the KKT  mapping $F_{ {D}}$, associated with problem (\ref{eq:QSDPD0}), can be defined  for any $(s,y,w, z,x  )\in  \mathbb{S}^p\times \Re^m\times {\cal W}\times \mathbb{S}^p\times \mathbb{S}^p $ that
\begin{equation}\label{3.3-D}
F_{{D}}(s,y,w,z,x) :=
\left[
\begin{array}{c}
{\cal A}x-b
\\
{\cal Q}w-{\cal Q}x
\\
-s-{\cal A}^*y+{\cal Q} w-z+c
\\
  s+\Pi_{{\mathbb{S}}^p_{-} }(-s+x)\\
- z+{{\rm Pr}_{\theta }}(z-x)
\end{array}
\right]  .
\end{equation}
Note that  for any  $x\in \Lambda_D(\bar s, \bar y, \bar w, \bar z )$, it holds that
 $$0 \in x+   \partial \theta(\bar z) \Longleftrightarrow 0\in (x, 1) + N_{{\rm epi\,}\theta}(\bar z, \theta(\bar z)) \Longleftrightarrow (\bar z,\theta(\bar z)) = \Pi_{{\rm epi\,}\theta} ( (\bar z,\theta(\bar z))-(x,1)).
$$
Moreover, since $\theta: \mathbb{S}^p \to (-\infty, +\infty]$  is a  proper closed convex polyhedral function  \cite[Corollary 19.2.1]{Roc70}, we know from convex analysis \cite[Theorem 23.10]{Roc70} that
$$T_{{\rm epi\,}\theta}  (\bar z,\theta(\bar z)) = \left( N_{{\rm epi\,}\theta}  (\bar z,\theta(\bar z))\right)^{\circ}  =\{(u,t) \in \mathbb{S}^p \times \Re : \,   \theta^\prime(\bar z; u) \le t \}.
$$
Thus, for any $x\in \Lambda_D(\bar s, \bar y, \bar w, \bar z )$,
$$\begin{array}{lcl}
T_{{\rm epi\,}\theta}  (\bar z,\theta(\bar z)) \cap (x,1)^{\perp} &=&\{(u,t) \in \mathbb{S}^p \times \Re : \, t =\langle u, -x\rangle = \theta^\prime(\bar z; u)\}
\\  & =& \{(u,t) \in \mathbb{S}^p \times \Re : \, u\in S_{ {x}, \bar{z}}, \  t =\langle u, -x\rangle \},
\end{array}
$$
where  for any $(x,z)\in \mathbb{S}^p\times \mathbb{S}^{p}$,  the set $S_{ {x}, z}$ is defined by
\[
S_{ {x}, z}:= \{u\in \mathbb{S}^p: \,   \langle u,   {x}\rangle+ \theta^\prime( z; u) = 0  \}
=\{u\in \mathbb{S}^p: \,   \langle u,   {x}\rangle+ (\delta^*_{\cal P})^\prime( -z; -u) = 0  \}.
\]
\begin{lemma}\label{lem:epi-Dirderivative} Let $x\in \Lambda_D(\bar s, \bar y, \bar w, \bar z )\ne \emptyset$. Then for any $(\delta z, \delta t) \in    \mathbb{S}^p\times \Re$ and  $\delta x\in \mathbb{S}^p$,
$$
(\delta z,\delta t)= (\Pi_{{\rm epi\,}\theta}) ^\prime ( (\bar z-x,\theta(\bar z)-1);(\delta z-\delta x,\delta t))\Longleftrightarrow \delta z =( {\rm Pr}_\theta)^\prime(\bar{z}-x; \delta z-\delta x), \ \delta t =\langle \delta z, -x\rangle.
$$
\end{lemma}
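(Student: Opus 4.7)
The plan is to invoke Lemma~\ref{lem:projection-sigmaP}(i) with ${\cal P}={\rm epi}\,\theta$, which is a convex polyhedron since $\theta$ is a proper closed convex polyhedral function. From $x\in\Lambda_D(\bar s,\bar y,\bar w,\bar z)$ we have $-x\in\partial\theta(\bar z)$, equivalently $(-x,-1)\in N_{{\rm epi}\,\theta}(\bar z,\theta(\bar z))$, so that $(\bar z,\theta(\bar z))=\Pi_{{\rm epi}\,\theta}(\bar z-x,\theta(\bar z)-1)$. Setting $c:=(\bar z-x,\theta(\bar z)-1)$, $c_+:=(\bar z,\theta(\bar z))$ and $c_-:=(-x,-1)$, the identification $c_+=\Pi_{\cal P}(c)$ and $c_-=c-c_+$ is legitimate and puts us in the exact setting of Lemma~\ref{lem:projection-sigmaP}(i).

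First I would characterize the critical cone $\mathcal{K}:=T_{{\rm epi}\,\theta}(\bar z,\theta(\bar z))\cap(-x,-1)^\perp$. Using the polyhedral representation $T_{{\rm epi}\,\theta}(\bar z,\theta(\bar z))=\{(u,s)\in\mathbb{S}^p\times\Re:\theta'(\bar z;u)\le s\}$, the orthogonality $s=-\langle x,u\rangle$, and the support inequality $\theta'(\bar z;u)\ge\langle -x,u\rangle$ coming from $-x\in\partial\theta(\bar z)$, one forces both inequalities to be equalities and obtains
$$
\mathcal{K}=\{(u,-\langle x,u\rangle):u\in S_{x,\bar z}\}.
$$
Applying Lemma~\ref{lem:projection-sigmaP}(i) with $a=(\delta z,\delta t)$ and $a+b=(\delta z-\delta x,\delta t)$, so $b=(-\delta x,0)$, the LHS of the claimed equivalence becomes
$$
(\delta z,\delta t)\in\mathcal{K}\quad\text{and}\quad(-\delta x,0)\in N_{\mathcal{K}}((\delta z,\delta t)).
$$
Using the graph-over-$S_{x,\bar z}$ structure of $\mathcal{K}$, the first condition unpacks to $\delta z\in S_{x,\bar z}$ and $\delta t=-\langle x,\delta z\rangle$, while the second reduces, by a one-line calculation on the linear form $(u,-\langle x,u\rangle)$, to $-\delta x\in N_{S_{x,\bar z}}(\delta z)$.

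For the RHS, the parallel characterization states that $\delta z={\rm Pr}'_\theta(\bar z-x;\delta z-\delta x)$ if and only if $\delta z\in S_{x,\bar z}$ and $(\delta z-\delta x)-\delta z=-\delta x\in N_{S_{x,\bar z}}(\delta z)$. This can either be invoked as the standard critical-cone derivative formula for the resolvent $(I+\partial\theta)^{-1}$ of the polyhedral maximal monotone operator $\partial\theta$ at the graph point $(\bar z,-x)$, or it can be derived self-contained from the epigraphical correspondence $u={\rm Pr}_\theta(y)\Longleftrightarrow (u,\theta(u))=\Pi_{{\rm epi}\,\theta}(y,\theta(u)-1)$: letting $u(\varepsilon):={\rm Pr}_\theta(\bar z-x+\varepsilon(\delta z-\delta x))$ and computing $\theta(u(\varepsilon))=\theta(\bar z)+\varepsilon\theta'(\bar z;u_1)+o(\varepsilon)$ with $u_1$ in the critical cone shows that the shift in the second coordinate is precisely $\delta t=-\langle x,\delta z\rangle$ and reduces the question to the already-established epigraphical characterization. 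Matching both sides line by line yields the stated equivalence.

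The principal delicate step is the identification of $\mathcal{K}$: establishing that, on the orthogonality slice $(-x,-1)^\perp$, the tangent-cone inequality $\theta'(\bar z;u)\le s$ becomes an equality $\theta'(\bar z;u)=-\langle x,u\rangle$ and thus ties $\mathcal{K}$ to $S_{x,\bar z}$ with the linear parametrization $s=-\langle x,u\rangle$. Once this identification and the resolvent derivative formula are in place, the remaining manipulations are purely algebraic.
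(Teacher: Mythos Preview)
Your argument is correct. The treatment of the left-hand side---applying Lemma~\ref{lem:projection-sigmaP}(i) to the polyhedral set ${\rm epi}\,\theta$, identifying the critical cone $T_{{\rm epi}\,\theta}(\bar z,\theta(\bar z))\cap(-x,-1)^\perp$ with the graph $\{(u,-\langle x,u\rangle):u\in S_{x,\bar z}\}$, and then reading off $\delta z\in S_{x,\bar z}$, $\delta t=-\langle x,\delta z\rangle$, $-\delta x\in N_{S_{x,\bar z}}(\delta z)$---is exactly what the paper does, only spelled out in more detail.

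Where you and the paper diverge is the right-hand side. You invoke a critical-cone derivative formula for the resolvent $(I+\partial\theta)^{-1}$ (or, alternatively, a somewhat delicate epigraphical limiting argument) to obtain $\delta z\in S_{x,\bar z}$ and $-\delta x\in N_{S_{x,\bar z}}(\delta z)$ directly. The paper instead exploits the explicit Moreau identity ${\rm Pr}_\theta(v)=v+\Pi_{\cal P}(-v)$, which follows from $\theta=\delta^*_{\cal P}(-\,\cdot\,)$, and applies Lemma~\ref{lem:projection-sigmaP}(i) a \emph{second} time, now to the original polyhedral set ${\cal P}$ at the point $x-\bar z$. This yields the dual-looking condition $-\delta z\in N_{S_{x,\bar z}^*}(\delta x)$, which is equivalent to yours by the standard polar-cone complementarity. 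The paper's route is shorter and entirely self-contained within its own lemmas; your route is valid but leans on an external resolvent-derivative fact (your first option) or on a limiting computation whose details need some care (your second option). Either way the conclusion matches.
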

\begin{proof}
 By using Lemma \ref{lem:projection-sigmaP}, we have
 $$
 (\delta z,\delta t)= (\Pi_{{\rm epi\,}\theta}) ^\prime ( (\bar z-x,\theta(\bar z)-1);(\delta z-\delta x,\delta t))\Longleftrightarrow
 -\delta x \in N_{S_{x,\bar z}}(\delta z ), \ \delta t =\langle \delta z, -x\rangle.
 $$
By noting that for any $v\in \mathbb{S}^p$,
$
{\rm Pr}_\theta(v) = v +\Pi_{\cal P}(-v),
$ we  know  from  Lemma \ref{lem:projection-sigmaP} that
$$
 \delta z =( {\rm Pr}_\theta)^\prime(\bar{z}-x; \delta z-\delta x)\Longleftrightarrow \delta z
 =(\delta z-\delta x)+\Pi_{\cal P}^\prime(x-\bar z; \delta x-\delta z) \Longleftrightarrow   -\delta z \in N_{S_{x,\bar z}^*}(\delta x ).
$$
The conclusion of this lemma then follows.
\end{proof}
The KKT  mapping $G_{ {D}}$,
associated with problem (\ref{eq:QSDPD}), for any $(s,y,w, (z,t),x,u, (v,\zeta))\in  \mathbb{S}^p\times \Re^m\times {\cal W}\times (\mathbb{S}^p\times \Re)\times \mathbb{S}^p\times \mathbb{S}^p\times (\mathbb{S}^p\times \Re)$ is given by
\begin{equation}\label{3.3-hatD}
G_{ {D}}(s,y,w,(z,t),x,u,(v,\zeta)) :=
\left[
\begin{array}{c}
x-u\\
{\cal A}x-b
\\
{\cal Q}w-{\cal Q}x
\\
(x,1)+(v,\zeta)
\\
-s-{\cal A}^*y+{\cal Q} w-z+c
\\
  s+\Pi_{{\mathbb{S}}^p_{-} }(-s+u)\\
- (z,t)+\Pi_{{\rm epi\,}\theta }((z,t)+(v,\zeta))
\end{array}
\right]  .
\end{equation}
By  using Lemmas \ref{lem:projection-sigmaP}, \ref{lem:calmDirDer} and \ref{lem:epi-Dirderivative}, we can   obtain with no difficulty the following equivalence on the isolated calmness property of  $(G_{ {D}} )^{-1}$ and  $(F_D)^{-1}$.
\begin{proposition}\label{propCalm5.2} Let  $(\bar s,\bar y,\bar w, (\bar z,\theta(\bar z)),\bar x,\bar u, (\bar{v},-1))\in  \mathbb{S}^p\times \Re^m\times {\cal W}\times (\mathbb{S}^p\times \Re)\times \mathbb{S}^p\times \mathbb{S}^p\times (\mathbb{S}^p\times \Re)$ be such that
$
G_{ {D}}  (\bar s,\bar y,\bar w, (\bar z,\theta(\bar z)),\bar x,\bar u, (\bar{v},-1))=0.
$
  Then  $(G_{ {D}} )^{-1}$ is isolated calm at the origin with respect to $(\bar s,\bar y,\bar w, (\bar z,\theta(\bar z)),\bar x,\bar u, (\bar{v},-1))$ if and only if $(F_D)^{-1}$  is isolated calm at the origin with respect to $(   \bar s, \bar y,   \bar{w}, \bar{z},\bar x)$.
\end{proposition}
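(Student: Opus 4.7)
The plan is to apply Lemma \ref{lem:calmDirDer} to both mappings: $(G_D)^{-1}$ (resp. $(F_D)^{-1}$) is isolated calm at the origin for the given reference point if and only if the only solution of $G_D'(\cdot;\cdot)=0$ (resp. $F_D'(\cdot;\cdot)=0$) at that reference point is the zero direction. Since both mappings are locally Lipschitz and directionally differentiable at the reference points (each of their components is affine or a composition with $\Pi_{\mathbb{S}^p_-}$, $\Pi_{{\rm epi}\,\theta}$, or ${\rm Pr}_\theta$, which are all directionally differentiable), the hypotheses of Lemma \ref{lem:calmDirDer} are met. Thus my task is to set up a bijection between the kernels of these two directional derivatives.

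First, I will write out $G_D'$ at the reference point in a direction $(\delta s,\delta y,\delta w,(\delta z,\delta t),\delta x,\delta u,(\delta v,\delta\zeta))$ componentwise. Setting it to zero produces: $\delta x=\delta u$, $\mathcal{A}\delta x=0$, $\mathcal{Q}\delta w=\mathcal{Q}\delta x$, $\delta v=-\delta x$, $\delta\zeta=0$, $\delta s+\mathcal{A}^*\delta y-\mathcal{Q}\delta w+\delta z=0$, together with the projection equations $\delta s+\Pi'_{\mathbb{S}^p_-}(-\bar s+\bar u;\,-\delta s+\delta u)=0$ and the epigraphical one $-(\delta z,\delta t)+\Pi'_{{\rm epi}\,\theta}((\bar z-\bar x,\theta(\bar z)-1);(\delta z-\delta x,\delta t))=0$. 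Using $\delta x=\delta u$ together with $\bar u=\bar x$, the $\mathbb{S}^p_-$-projection equation reduces to exactly the component $\delta s+\Pi'_{\mathbb{S}^p_-}(-\bar s+\bar x;\,-\delta s+\delta x)=0$ of $F_D'=0$. For the epigraphical part, I will invoke Lemma \ref{lem:epi-Dirderivative} with the specific data $x\leftarrow \bar x$ (so that $\bar v=-\bar x$ and $\bar\zeta=-1$ match the reference point), which tells us this is equivalent to the pair $\delta z=({\rm Pr}_\theta)'(\bar z-\bar x;\delta z-\delta x)$ and $\delta t=\langle\delta z,-\bar x\rangle$. The first of these is exactly the remaining component of $F_D'=0$, while the second merely determines $\delta t$ uniquely from $(\delta z,\delta x)$.

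Comparing the two systems, I obtain a clean bijection: any zero direction of $F_D'$ extends uniquely to a zero direction of $G_D'$ by setting $\delta u=\delta x$, $\delta v=-\delta x$, $\delta\zeta=0$, $\delta t=\langle\delta z,-\bar x\rangle$, and conversely every zero direction of $G_D'$ restricts, via $(\delta s,\delta y,\delta w,\delta z,\delta x)$, to a zero direction of $F_D'$. Under this bijection, triviality of one kernel is equivalent to triviality of the other, and applying Lemma \ref{lem:calmDirDer} on both sides closes the argument. The main technical point is the epigraphical reduction; once Lemma \ref{lem:epi-Dirderivative} is invoked, the extra slack variables $(u,(v,\zeta),t)$ are seen to be fully slaved to $(x,z)$ in the linearization, so no information is lost or added in passing from $F_D$ to $G_D$.
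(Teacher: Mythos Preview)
Your proposal is correct and follows essentially the same approach as the paper, which simply invokes Lemmas \ref{lem:projection-sigmaP}, \ref{lem:calmDirDer} and \ref{lem:epi-Dirderivative} and declares the result to follow ``with no difficulty.'' You have spelled out precisely the bijection between the kernels of $G_D'$ and $F_D'$ that those lemmas encode, correctly identifying that the auxiliary components $(\delta u,\delta t,\delta v,\delta\zeta)$ are determined by $(\delta x,\delta z)$ through the linearized equations, with Lemma \ref{lem:epi-Dirderivative} handling the epigraphical projection step.
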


Based on the equivalence between problem (\ref{eq:QSDPD}) and problem (\ref{eq:QSDPD0}), as   in \cite{ZSToh2010} for the linear SDP case,
     we can now introduce the concept of the extended SRCQ for problem (\ref{eq:QSDPD0}) in the following definition.

\begin{definition}\label{DefinitionECQ} Suppose that
 $\Lambda_P(\bar{x}, \bar u  ) \neq \emptyset$.
We say that  the extended SRCQ    for the dual problem  (\ref{eq:QSDPD0}) holds at $\Lambda (\bar{x}, \bar{u} )$  with respect to $(\bar{x},\bar{u})$  if
 \begin{equation}\label{eq:extendedSRCQD0}
  {\rm conv}\, \left \{ \bigcup \limits_{(s,y,z,v)\in \Lambda_P(\bar{x}, \bar{u})}  \left (  T_{\mathbb{S}^p_+}( s) \cap {\bar x}^{\perp} + S_{\bar{x}, z}\right)\right \}+{\cal A}^* \Re^m -{\cal Q} {\cal W}  =\mathbb{S}^p,
\end{equation}
where ``conv" denotes the convex hull of a set.
\end{definition}
Now we can establish  the relationship between the second-order sufficient optimality condition for problem  (\ref{eq:QSDP}) and  the extended SRCQ  for problem (\ref{eq:QSDPD0}).

\begin{proposition}\label{eqiv-SOSC-RCQ}
Let  $(\bar{x}, \bar u)\in \mathbb{S}^p _+\times {\cal P}$ be an optimal solution to  problem (\ref{eq:QSDP})
 with $\Lambda_P(\bar{x}, \bar{u}) \neq \emptyset$. Let   ${\cal W}\subseteq \mathbb{S}^p$ be any linear subspace that  contains
${\rm Range\,} {\cal Q}$. Then   the following two conditions are equivalent:
\begin{description}
\item[(i)]
The second-order sufficient optimality condition  for  the primal problem  (\ref{eq:QSDP}) holds at  $(\bar{x}, \bar{u})$:
\begin{equation}\label{eq:sosoc-primal}
\sup_{(s,y,z,v)\in \Lambda_P(\bar {x}, \bar{u})}\quad\left\{  \langle {\cal Q}d_x,d_x\rangle + 2 \langle s, d_{x}{\bar x}^\dagger  d_{x}\rangle\right\}> 0,\quad \forall\, 0\ne (d_x, d_u)\in {\cal C}(\bar{x}, \bar{u}).
\end{equation}

 \item[(ii)] The extended SRCQ  (\ref{eq:extendedSRCQD0}) for the dual  problem (\ref{eq:QSDPD0}) holds at  $\Lambda_P(\bar{x}, \bar{u})$  with respect to  $(\bar{x},\bar{u})$.
     \end{description}
\end{proposition}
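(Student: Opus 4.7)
My plan is to prove the equivalence by contrapositive, establishing a sign-flip bijection between violators of (i) and of (ii) via polar-cone duality together with the spectral block-matrix calculus developed in Lemmas~\ref{lem:projection-sigma-matrix} and \ref{lem:projection-sigmaP}.

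First I translate the extended SRCQ into a polar statement. Since $\mathcal{W}\supseteq \mathrm{Range}(\mathcal{Q})$ and $\mathcal{Q}$ is self-adjoint positive semi-definite, $\mathcal{Q}\mathcal{W}=\mathrm{Range}(\mathcal{Q})$ with polar $\ker\mathcal{Q}$; likewise $(\mathcal{A}^*\mathbb{R}^m)^\perp=\ker\mathcal{A}$. Applying $(C_1+C_2)^\circ = C_1^\circ\cap C_2^\circ$ for closed convex cones and $(\mathrm{conv}\bigcup_i C_i)^\circ = \bigcap_i C_i^\circ$, condition \eqref{eq:extendedSRCQD0} is equivalent to
\[
\bigcap_{(s,y,z,v)\in\Lambda_P(\bar x,\bar u)}\bigl[(T_{\mathbb{S}^p_+}(s)\cap\bar{x}^\perp)^\circ\cap S_{\bar x,z}^\circ\bigr]\cap\ker\mathcal{A}\cap\ker\mathcal{Q}=\{0\}.
\]
By Lemma~\ref{lem:projection-sigmaP}(ii), $S_{\bar x,z}^{\circ}=-S_{\bar x,z}^{*}=-(T_{\mathcal{P}}(\bar x)\cap z^\perp)$. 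For the semi-definite polar I use the spectral decomposition \eqref{eq:spectralDecomposition} with $A=\bar x$, $B=-s$, $C=\bar x - s$, so that $P_\alpha$ spans $\mathrm{Range}(\bar x)$, $P_\gamma$ spans $\mathrm{Range}(s)$, and $P_\beta$ spans the common kernel of $\bar x$ and $s$. A block-by-block computation shows that $H\in T_{\mathbb{S}^p_+}(s)\cap\bar x^\perp$ iff $P_\alpha^T H P_\alpha = 0$, $P_\alpha^T H P_\beta = 0$, and $P_\beta^T H P_\beta\succeq 0$ (with the remaining blocks free); dualizing yields
\[
(T_{\mathbb{S}^p_+}(s)\cap\bar x^\perp)^{\circ}=\{D\in\mathbb{S}^p:\, DP_\gamma=0,\ P_\beta^T D P_\beta\preceq 0\}.
\]

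Next I treat the SOSC side. Since $\mathcal{Q}\succeq 0$ and $2\langle s,d_x\bar{x}^\dagger d_x\rangle = 2\,\mathrm{tr}\bigl((-\Lambda_\gamma)(P_\gamma^T d_x P_\alpha)\Lambda_\alpha^{-1}(P_\alpha^T d_x P_\gamma)\bigr)\ge 0$, condition (i) fails iff there exists $0\ne d_x\in\mathcal{C}(\bar x,\bar u)$ with $\mathcal{Q}d_x=0$ and $\langle s,d_x\bar x^\dagger d_x\rangle=0$ for every $(s,y,z,v)\in\Lambda_P(\bar x,\bar u)$. Using \eqref{eq:Cst} and the explicit description \eqref{eq:criticalConeP}, membership $d_x\in T_{\mathbb{S}^p_+}(\bar x)\cap s^\perp$ forces $P_\gamma^T d_x P_\gamma=0$, and then the PSD Schur complement gives $P_\gamma^T d_x P_\beta=0$; the vanishing sigma-term yields $P_\alpha^T d_x P_\gamma=0$, so altogether $d_x P_\gamma=0$. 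Combined with $P_\beta^T d_x P_\beta\succeq 0$ and $d_x\in T_{\mathcal{P}}(\bar x)\cap z^\perp$, this matches precisely the structure of $-D$ for any $D$ in the polar intersection above.

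The final step is to observe that $d_x\mapsto D:=-d_x$ is a bijection between SOSC violators and nonzero elements of the polar set: every condition differs only by a sign flip on the $P_\beta^T(\cdot)P_\beta$ block and on the polyhedral membership in $T_{\mathcal{P}}(\bar x)\cap z^\perp$, which is exactly what polar duality delivers. Hence (i) fails iff (ii) fails. The main technical obstacle is the careful block-matrix bookkeeping needed to compute $(T_{\mathbb{S}^p_+}(s)\cap\bar x^\perp)^\circ$ and to propagate the orthogonality condition $\langle H,\bar x\rangle=0$ from the $(\alpha,\alpha)$-block down to the $(\alpha,\beta)$-block via the Schur complement applied to the tangent-cone PSD constraint; a secondary subtlety is that the spectral partition $(\alpha,\beta,\gamma)$ depends on the choice of $s$, but because both conditions quantify over the same set $\Lambda_P(\bar x,\bar u)$, the per-multiplier correspondence glues into the full equivalence with no additional argument.
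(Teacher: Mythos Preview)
Your proof is correct and follows the same contrapositive strategy as the paper, but the execution differs in a meaningful way. Both proofs pivot on the polar duality between the extended SRCQ cone and the set of SOSC violators, and both rely on Lemmas~\ref{lem:projection-sigma-matrix} and \ref{lem:projection-sigmaP} for the block-matrix and polyhedral calculus. The direction ${\rm (i)}\Rightarrow{\rm (ii)}$ is essentially identical: the paper also takes a nonzero $\bar h$ in the polar of the (closure of the) SRCQ cone and shows it violates the SOSC.

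The genuine difference is in ${\rm (ii)}\Rightarrow{\rm (i)}$. You compute $(T_{\mathbb{S}^p_+}(s)\cap\bar x^\perp)^\circ$ and $S_{\bar x,z}^\circ$ explicitly and observe that the sign flip $d_x\mapsto -d_x$ carries SOSC violators bijectively onto nonzero polar elements; the equivalence then drops out symmetrically. The paper instead takes a SOSC violator $\bar h$, invokes the assumed SRCQ to write $-\bar h = \hat d + \mathcal{A}^*\hat y - \mathcal{Q}\hat w$ with $\hat d$ in the convex hull, applies Carath\'eodory's theorem to express $\hat d$ as a finite convex combination of elements $\hat d^i \in T_{\mathbb{S}^p_+}(s^i)\cap\bar x^\perp + S_{\bar x,z^i}$, and then shows $\langle \bar h,\bar h\rangle = \langle -\hat d,\bar h\rangle \le 0$ term by term. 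Your route is cleaner and avoids Carath\'eodory entirely; the paper's route is perhaps more self-contained in that it does not require the full polar computation, only a one-sided inequality for each summand. Both rest on the same spectral facts (in particular that $P_\alpha$ depends only on $\bar x$ while $P_\beta,P_\gamma$ depend on $s$), which you correctly flag.

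One small point worth making explicit in your write-up: the equivalence ``$\mathcal{D}=\mathbb{S}^p$ iff $\mathcal{D}^\circ=\{0\}$'' requires that $\mathcal{D}$ be a convex cone and that closure does not enlarge it to all of $\mathbb{S}^p$; the paper handles this by citing \cite[Theorem~6.3]{Roc70}, and you should note the standard fact that a convex cone in a finite-dimensional space with dense complement cannot have full closure.
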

\begin{proof} For notational convenience, denote
 $$
  \Gamma:=  {\rm conv}\, \left \{ \bigcup \limits_{(s,y,z,v)\in \Lambda_P(\bar{x}, \bar{u})}  \left (  T_{\mathbb{S}^p_+}( s) \cap {\bar x}^{\perp} + S_{\bar{x}, z}\right)\right \} \quad {\rm and} \quad
   {\cal D}: =\Gamma  +{\cal A}^* \Re^m-{\cal Q} {\cal W} .
$$

 $``{\rm (i)}\Longrightarrow {\rm (ii)}"$   We prove  this part by contradiction. Suppose that the extended SRCQ  (\ref{eq:extendedSRCQD0}) for the dual  problem (\ref{eq:QSDPD0})   does not hold at  $\Lambda( \bar x, \bar u)$ with respect to $(\bar{x},\bar{u})$.
  Then ${\cal D} \ne \mathbb{S}^p$. Let $ {\rm cl}({\cal D})$ denote the closure of ${\cal D}$. Since    ${\rm cl}({\cal D}) \ne \mathbb{S}^p$ (cf. \cite[Theorem 6.3]{Roc70}), there exists a point $a\in \mathbb{S}^p$ but $a\notin {\rm cl}({\cal D})$. Let $\bar h: =\Pi_{{\rm cl} ({\cal D})}(a)-a$.  By using the fact that ${\rm cl}({\cal D})$ is a closed convex cone in $\mathbb{S}^p$, we have
  $$
  \langle \bar h , d\rangle \ge 0, \quad \forall\, d \in  {\rm cl} ( {\cal D}) ,
  $$
  which, together with the assumption  ${\rm Range\,}{\cal Q} \subseteq {\cal W}$,  implies that ${\cal A} \bar h =0$, ${\cal Q} \bar h =0$ and
  \begin{equation}\label{eq:gamma}
   \langle \bar h, d\rangle \ge 0, \quad \forall\, d \in   \Gamma .
  \end{equation}
Let $(s, y,z,v)$ be an arbitrary point  in $\Lambda_P(\bar x, \bar u)$. Then $0 \in s+N_{\mathbb{S}^{p}_+}(\bar x)$ and  $ 0 \in z+   N_{{\cal P}}(\bar u)$. Since $\bar{x} \in N_{\mathbb{S}^{p}_-}(-s)$, without loss of generality, we can assume that $A:=\bar{x}$, $B := -s$ and $C:=-s+\bar{x}$ have the spectral decompositions as in
 (\ref{eq:spectralDecomposition}).
Then, by using (\ref{eq:gamma}),  part (i) of  Lemma \ref{lem:projection-sigma-matrix}
(applying to $A=\bar{x}$ and $B=-s$ and using $T_{\mathbb{S}^p_+} (s) = -T_{\mathbb{S}^p_-} (-s)$) and part (ii) of Lemma \ref{lem:projection-sigmaP} (applying to $a=z$ and  $b =\bar x$), we obtain (recall that $\bar{x}=\bar{u}$)
$$
\bar h \in {\cal C}_{\mathbb{S}^p_+} (\bar x-s), \quad
\langle s, \bar{h} {\bar x}^\dagger  \bar {h}\rangle=0\quad  \& \quad  \bar{h} \in {\cal C}_{\cal P} (\bar x-z).
$$
Therefore, $0\ne (\bar h, \bar h) \in {\cal C} (\bar x, \bar u)$.  Thus, by using the condition (\ref{eq:sosoc-primal}),  we know that  there exists
$(\bar{s}, \bar{y},\bar{z},\bar{v}) \in \Lambda_P(\bar x, \bar u)$ such that
$$
 \langle {\cal Q}\bar{h},\bar{h}\rangle + 2 \langle \bar {s}, \bar{h} {\bar x}^\dagger \bar{h}\rangle >0,
$$
which contradicts the proven   ${\cal Q}\bar{h}=0$ and $\langle \bar{s}, \bar{h} {\bar x}^\dagger  \bar {h}\rangle=0$. This contradiction shows that this  part holds.

 $``{\rm (ii)}\Longrightarrow {\rm (i)}"$   For the sake of contradiction we suppose that the second-order sufficient optimality condition (\ref{eq:sosoc-primal}) for  the primal problem  (\ref{eq:QSDP})  at  $(\bar{x}, \bar{u})$
 fails to hold. Then there exists $0\ne (\bar{h} , \bar{h}) \in {\cal C}(\bar x, \bar u)$ such that
$$\sup_{(s,y,z,v)\in \Lambda_P(\bar {x}, \bar{u})}\quad\left\{   \langle {\cal Q}\bar{h},\bar{h}\rangle + 2 \langle s, \bar{h}{\bar x}^\dagger  \bar{h}\rangle\right\}= 0,
 $$
 which implies
 $$
  \langle {\cal Q}\bar{h},\bar{h}\rangle =0\quad {\rm and} \quad \langle s, \bar{h}{\bar x}^\dagger  \bar{h}\rangle = 0, \quad \forall\,  (s,y,z,v)\in \Lambda_P(\bar {x}, \bar{u}) .
 $$
Let $(s, y,z,v)$ be an arbitrary point  in $\Lambda_P(\bar x, \bar u)$. By using the fact that $0 \in s+N_{\mathbb{S}^{p}_+}(\bar x)$ if and only if $\bar{x} \in N_{\mathbb{S}^{p}_-}(-s)$, without loss of generality, we can assume $A:=\bar{x}$, $B := -s$ and $C:=-s+\bar{x}$ have the spectral decompositions as in
 (\ref{eq:spectralDecomposition}). Then, from $\langle s, \bar{h}{\bar x}^\dagger  \bar{h}\rangle = 0$ we know that $P^T_\alpha \bar{h} P_\gamma =0$.

 Since the extended SRCQ  (\ref{eq:extendedSRCQD0}) is assumed to  hold, there exist $\hat y\in \Re^m$, $\hat w\in {\cal W}$ and $\hat d\in \Gamma$ such that $-\bar{h}= \hat d + {\cal A}^*\hat y-{\cal Q} \hat w $.
 By Carath{\'{e}}odory's theorem, there exist a positive  integer  $ k\le p(p+1)/2+1$,  scalars $\mu_i\ge 0$, $i=1, \ldots, k$, with $\mu_1 +\mu_2 +\ldots +\mu_k=1$, and points
 $$
 {\hat d}^{i} \in \bigcup \limits_{(s,y,z,v)\in \Lambda_P(\bar{x}, \bar{u})}  \left (  T_{\mathbb{S}^p_+}( s) \cap {\bar x}^{\perp} + S_{\bar{x}, z}\right), \quad i=1, \ldots, k
 $$
 such that $\hat d = \mu_1 {\hat d}^1   +\mu_2 {\hat d}^2+\ldots +\mu_k  {\hat d} ^k$. For each $\hat{d}^i$,
 there exist $(s^i,y^i, z^i,v^i) \in  \Lambda_P(\bar{x}, \bar{u})$,  $\hat{d}_1^i \in T_{\mathbb{S}^p_+}( s^i)\cap {\bar x}^{\perp}$ and  ${\hat d}_2^i \in S_{\bar{x}, z^i}$ such that ${\hat d}^i = {\hat d}_1^i +{\hat d}_2^i$. Then, by using
  ${\cal Q} \bar h=0$,  $P^T_\alpha \bar{h} P_\gamma =0$, $(\bar{h} , \bar{h}) \in {\cal C}(\bar x, \bar u)$, $T_{\mathbb{S}^p_+} (s) = -T_{\mathbb{S}^p_-} (-s)$, part (i) of  Lemma \ref{lem:projection-sigma-matrix} and part (ii)    of Lemma \ref{lem:projection-sigmaP}, we have
 $$
 \langle \bar{h}, \bar{h}\rangle =  \langle -\hat d- {\cal A}^*\hat y+{\cal Q} \hat w , \bar{h}\rangle =    \langle -\hat d, \bar{h}\rangle=-\sum_{i=1}^k \mu_i\langle {\hat d}_1^i +{\hat d}_2^i, {\bar h}\rangle   \le 0.
 $$
 This contradiction shows that this part is also true.
 \end{proof}

If  $\Lambda_P(\bar{x}, \bar{u})$ is a singleton, we have the following corollary.
\begin{corollary}\label{Coro-convex-caseP}
Let  $(\bar{x}, \bar u)\in \mathbb{S}^p_+ \times {\cal P}$ be an optimal solution to  problem (\ref{eq:QSDP}). If
  $\Lambda_P(\bar{x}, \bar{u})=\{(\bar s,\bar y,\bar z, \bar v)\}$, then   the following two conditions are equivalent:
\begin{description}
\item[(i)]
The second-order sufficient optimality condition  for  the primal problem  (\ref{eq:QSDP}) holds at  $(\bar{x}, \bar{u})$:
\begin{equation}\label{eq:sosoc-primalU}
  \langle {\cal Q}d_x,d_x\rangle + 2 \langle\bar{s}, d_{x}{\bar x}^\dagger  d_{x}\rangle > 0,\quad \forall\, 0\ne (d_x, d_u)\in {\cal C}(\bar{x}, \bar{u}).
\end{equation}

 \item[(ii)] The   SRCQ    for the dual  problem (\ref{eq:QSDPD0}) holds at  $ (\bar s,\bar y,\bar z,\bar v)$  with respect to  $(\bar{x}, \bar{u})$:
  \begin{equation}\label{eq:extendedSRCQD0U}
 T_{\mathbb{S}^p_+}     ( \bar{s}) \cap {\bar x}^{\perp} + S_{\bar{x}, \bar{z} }    +{\cal A}^* \Re^m-{\cal Q} {\cal W} =\mathbb{S}^p.
\end{equation}
     \end{description}
\end{corollary}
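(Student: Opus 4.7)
The corollary is the specialization of Proposition \ref{eqiv-SOSC-RCQ} to the case where the Lagrange multiplier set is a singleton, so my plan is to derive it as a direct consequence of that proposition by showing that, under the singleton assumption, the general conditions (\ref{eq:sosoc-primal}) and (\ref{eq:extendedSRCQD0}) collapse to (\ref{eq:sosoc-primalU}) and (\ref{eq:extendedSRCQD0U}), respectively.

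For the primal side, this is immediate: when $\Lambda_P(\bar x,\bar u)=\{(\bar s,\bar y,\bar z,\bar v)\}$, the supremum in (\ref{eq:sosoc-primal}) is taken over a single element and therefore equals the value of the bracketed expression at $(\bar s,\bar y,\bar z,\bar v)$, which is precisely (\ref{eq:sosoc-primalU}). No further argument is needed here.

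For the dual side, the union in (\ref{eq:extendedSRCQD0}) reduces to the single set
\[
T_{\mathbb{S}^p_+}(\bar s)\cap\bar x^{\perp}+S_{\bar x,\bar z}.
\]
The only mildly nontrivial point is to observe that this set is already convex, so that the outer ``${\rm conv}$'' in (\ref{eq:extendedSRCQD0}) is redundant. This follows because $T_{\mathbb{S}^p_+}(\bar s)$ is the tangent cone to a convex set (hence a convex cone), its intersection with the linear subspace $\bar x^{\perp}$ is again a convex cone, and by Lemma \ref{lem:projection-sigmaP}(ii) the set $S_{\bar x,\bar z}$ equals $-N_{\mathcal{P}_{-\bar z}}(\bar x)$ and is therefore a convex cone; the Minkowski sum of two convex cones is convex. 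Consequently (\ref{eq:extendedSRCQD0}) coincides with (\ref{eq:extendedSRCQD0U}).

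Having verified these two reductions, the equivalence (i)$\Longleftrightarrow$(ii) follows directly from Proposition \ref{eqiv-SOSC-RCQ}. Finally, I would remark that under the singleton assumption, condition (\ref{eq:extendedSRCQD0U}) is precisely the SRCQ (in the sense of (\ref{2.5})) for the dual problem (\ref{eq:QSDPD0}) at the unique multiplier with respect to the primal solution $(\bar x,\bar u)$, which justifies calling it the SRCQ rather than the ``extended'' SRCQ; no separate argument is required for this last identification beyond unfolding the definitions of the tangent/critical cones associated with the constraints $s\in\mathbb{S}^p_+$ and $z\in-\partial\delta_{\mathcal P}^{*}$-type (epigraphical) pieces of problem (\ref{eq:QSDPD0}), which has already been carried out in the discussion preceding Lemma \ref{lem:epi-Dirderivative}. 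There is no genuine obstacle in this corollary beyond the convexity observation for $T_{\mathbb{S}^p_+}(\bar s)\cap\bar x^{\perp}+S_{\bar x,\bar z}$.
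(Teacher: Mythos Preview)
Your proposal is correct and follows exactly the route the paper intends: the paper states this corollary immediately after Proposition \ref{eqiv-SOSC-RCQ} with no separate proof, treating it as the obvious specialization to the singleton case. Your added justification that $T_{\mathbb{S}^p_+}(\bar s)\cap\bar x^{\perp}+S_{\bar x,\bar z}$ is already convex (so the ``${\rm conv}$'' is redundant) is the only point worth making explicit, and you handle it correctly via Lemma \ref{lem:projection-sigmaP}(ii).
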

 In the next proposition, we shall establish an analogous  result to Proposition \ref{eqiv-SOSC-RCQ} between the second order sufficient optimization condition for   the dual  problem (\ref{eq:QSDPD0}) with  ${\cal W}={\rm Range\,} {\cal Q}$ and the extended SRCQ condition for the primal problem  (\ref{eq:QSDP}).

\begin{proposition}\label{eqiv-SOSC-RCQ2015}
  Let   ${\cal W}={\rm Range\,} {\cal Q}$ and $(\bar s, \bar y, \bar w, \bar z)\in \mathbb{S}^p\times\Re^m\times {\cal W}  \times \mathbb{S}^p$ be an optimal solution to the dual  problem (\ref{eq:QSDPD0})
 with $\Lambda_D(\bar s, \bar y, \bar w, \bar z ) \neq \emptyset$.  Then   the following two conditions are equivalent$:$
\begin{description}
\item[(i)]
The second-order sufficient optimality condition  for    the dual  problem (\ref{eq:QSDPD0}) holds at  $(\bar s, \bar y, \bar w, \bar z):$
\begin{equation}\label{eq:sosoc-dual}
\sup_{x\in \Lambda_D(\bar s, \bar y, \bar w, \bar z )}  \left\{  \langle {\cal Q}d_w,d_w\rangle + 2 \langle x, d_{s}{\bar s}^\dagger  d_{s}\rangle\right\}> 0, \ \forall\, 0\ne (d_s, d_y,d_w,d_z)\in {\cal C}(\bar s, \bar y, \bar w, \bar z ),
\end{equation}
where ${\cal C}(\bar s, \bar y, \bar w, \bar z)$ is the critical  cone consisting of all the vectors $ (d_s, d_y,d_w,d_z) \in \mathbb{S}^p\times \times \Re^m \times {\cal W} \times  \mathbb{S}^p$ such that
$$ d_s  +{\cal A}^* d_y-{\cal Q} d_w+ d_z=0, \quad  d_s \in T_{\mathbb{S}_+^p}(\bar s) \cap x^{\perp} \quad \&\quad  d_z \in S_{x,\bar s }.
$$
 \item[(ii)] The extended SRCQ      for the primal problem   (\ref{eq:QSDP})  holds at  $\Lambda_D(\bar s, \bar y, \bar w, \bar z )$  with respect to  $(\bar s, \bar y, \bar w, \bar z ):$
      \begin{equation}\label{eq:extendedSRCQPrimal}
  {\rm conv}\, \left \{ \bigcup \limits_{x\in \Lambda_D(\bar s, \bar w, \bar y, \bar z)} \left(  \left ( \begin{array}{c} {\cal A} \\ {\cal I} \end{array} \right) T_{\mathbb{S}^p_+}( x) \cap {\bar s}^{\perp}
  + \{0\} \times \left(T_{\cal P}({x}) \cap \bar{z}^{\perp}\right)\right) \right \}  =\Re^m\times \mathbb{S}^p.
\end{equation}
     \end{description}
\end{proposition}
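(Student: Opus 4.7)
The plan is to mirror the proof of Proposition \ref{eqiv-SOSC-RCQ}, exploiting the fact that under the choice ${\cal W}=\text{Range}\,{\cal Q}$ the dual problem (\ref{eq:QSDPD0}) is itself a convex composite quadratic SDP whose KKT system (\ref{eq:LS-D}) is structurally analogous to (\ref{eq:LS-P}) with the roles of primal and dual variables swapped. This dual symmetry lets both implications follow from the same argument pattern applied in opposite directions, once one checks that ${\cal W}=\text{Range}\,{\cal Q}$ is exactly what enables both the reduction ${\cal Q}\hat d_w=0\Rightarrow\hat d_w=0$ and the identification ${\cal Q}{\cal W}={\cal Q}\mathbb{S}^p$ to go through.

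For $(ii)\Rightarrow(i)$, I would argue by contradiction. If (\ref{eq:sosoc-dual}) fails then there is a non-zero $(\hat d_s,\hat d_y,\hat d_w,\hat d_z)\in{\cal C}(\bar s,\bar y,\bar w,\bar z)$ for which the supremum vanishes, so $\langle{\cal Q}\hat d_w,\hat d_w\rangle=0$ and $\langle x,\hat d_s\bar s^\dagger\hat d_s\rangle=0$ for every $x\in\Lambda_D(\bar s,\bar y,\bar w,\bar z)$. The first identity together with $\hat d_w\in\text{Range}\,{\cal Q}$ and the positive semidefiniteness of ${\cal Q}$ forces $\hat d_w=0$. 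Invoking the extended SRCQ (\ref{eq:extendedSRCQPrimal}), Carath{\'e}odory's theorem yields a representation of $(-{\cal A}\hat d_s,-\hat d_s)\in\Re^m\times\mathbb{S}^p$ as a finite convex combination of generators, each attached to some $x^i\in\Lambda_D$. Pairing this decomposition against $(\hat d_y,\hat d_s)$ and using the linearized equality $\hat d_s+{\cal A}^*\hat d_y-{\cal Q}\hat d_w+\hat d_z=0$ together with the orthogonality relations supplied by Lemma \ref{lem:projection-sigma-matrix}(i) (applied to $A=x^i$, $B=-\bar s$) and Lemma \ref{lem:projection-sigmaP}(ii), I expect to obtain $\langle\hat d_s,\hat d_s\rangle\le 0$ and hence $\hat d_s=0$; the equality constraint together with the surjectivity implicit in (\ref{eq:extendedSRCQPrimal}) then forces $\hat d_y=0$ and $\hat d_z=0$, contradicting non-triviality.

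For $(i)\Rightarrow(ii)$, again by contradiction: failure of (\ref{eq:extendedSRCQPrimal}) gives a non-zero polar element $(\hat h,\hat k)\in\Re^m\times\mathbb{S}^p$ satisfying, for every $x\in\Lambda_D$, $\langle{\cal A}^*\hat h+\hat k,d\rangle\ge 0$ on $d\in T_{\mathbb{S}^p_+}(x)\cap\bar s^{\perp}$ and $\langle\hat k,d'\rangle\ge 0$ on $d'\in T_{\cal P}(x)\cap\bar z^{\perp}$. Fixing any $x\in\Lambda_D$ and applying Lemma \ref{lem:projection-sigma-matrix}(i) with $A=x$, $B=-\bar s$ yields $-({\cal A}^*\hat h+\hat k)\in T_{\mathbb{S}^p_+}(\bar s)\cap x^{\perp}$ together with the companion identity $\langle x,({\cal A}^*\hat h+\hat k)\bar s^\dagger({\cal A}^*\hat h+\hat k)\rangle=0$ via Lemma \ref{lem:projection-sigma-matrix}(ii); Lemma \ref{lem:projection-sigmaP}(ii) applied with $a=\bar z$, $b=x$ gives $-\hat k\in{\cal C}_{\cal P}(x-\bar z)$. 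Setting $d_s:=-{\cal A}^*\hat h-\hat k$, $d_y:=\hat h$, $d_w:=0$, $d_z:=\hat k$ then produces a non-zero element of ${\cal C}(\bar s,\bar y,\bar w,\bar z)$ that annihilates the supremum in (\ref{eq:sosoc-dual}), contradicting (i).

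The main obstacle lies in the quantifier structure: both the critical cone ${\cal C}(\bar s,\bar y,\bar w,\bar z)$ and the extended SRCQ (\ref{eq:extendedSRCQPrimal}) are parametrized by the multiplier $x\in\Lambda_D$ (through $d_s\in x^{\perp}$ and the polyhedral component $d_z\in S_{x,\bar z}$), so the Carath{\'e}odory decomposition must coordinate different $x^i$'s with the complementary-slackness orthogonalities in a consistent way. A secondary subtlety is separating the non-polyhedral $\mathbb{S}^p_+$ contribution from the polyhedral ${\cal P}$ contribution inside the polar element $(\hat h,\hat k)$; this requires the explicit spectral-decomposition form of Lemma \ref{lem:projection-sigma-matrix}, applied with $x$ rather than $\bar y$ in the role of the eigenvalue-carrying matrix, to ensure that the $\alpha\gamma$-block vanishing condition is correctly transferred between primal- and dual-flavoured critical cones.
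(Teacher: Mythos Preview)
Your overall plan is right and matches the paper exactly: both directions are argued by contradiction, mirroring Proposition~\ref{eqiv-SOSC-RCQ}, and the reduction $\langle{\cal Q}\hat d_w,\hat d_w\rangle=0\Rightarrow\hat d_w=0$ using $\hat d_w\in{\cal W}=\text{Range}\,{\cal Q}$ is precisely the paper's first move in $(ii)\Rightarrow(i)$. Your $(i)\Rightarrow(ii)$ outline also matches the paper: a nonzero polar element $(\bar h_1,\bar h_2)$ of $\Gamma$ is converted, via Lemma~\ref{lem:projection-sigma-matrix}(i) (with $A=x$, $B=-\bar s$) and Lemma~\ref{lem:projection-sigmaP}(ii), into a nonzero critical direction $d_s=-({\cal A}^*\bar h_1+\bar h_2)$, $d_y=\bar h_1$, $d_w=0$, $d_z=\bar h_2$ with vanishing $\sigma$-term. (One sign slip: your polar gives $\hat k\in(T_{\cal P}(x)\cap\bar z^\perp)^*=S_{x,\bar z}$, so you should set $d_z=\hat k$, not $-\hat k$.)

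The gap is in your $(ii)\Rightarrow(i)$ Carath\'eodory step. Decomposing $(-{\cal A}\hat d_s,-\hat d_s)$ and pairing with $(\hat d_y,\hat d_s)$ does \emph{not} produce $\|\hat d_s\|^2\le 0$: using $\hat d_s+{\cal A}^*\hat d_y+\hat d_z=0$ the left side collapses to $\langle\hat d_s,\hat d_z\rangle$, while the right side becomes $\sum_i\mu_i\bigl[-\langle d_1^i,\hat d_z\rangle+\langle d_2^i,\hat d_s\rangle\bigr]$, and neither of these cross terms has a controlled sign (you only know $\langle d_1^i,\hat d_s\rangle\ge 0$ and $\langle d_2^i,\hat d_z\rangle\ge 0$). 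The asymmetry with Proposition~\ref{eqiv-SOSC-RCQ} is real: there the critical direction $\bar h$ satisfied ${\cal A}\bar h=0$ and ${\cal Q}\bar h=0$, which killed the linear pieces of the decomposition; here the only linear relation you have is $\hat d_s=-({\cal A}^*\hat d_y+\hat d_z)$, and it does not cancel the mixed SDP/polyhedral cross terms for your chosen element. You need a different target vector in $\Re^m\times\mathbb{S}^p$ (or a two-step argument) so that the pairing with a generator $({\cal A}d_1,d_1+d_2)$ reduces to a combination of $\langle d_1,\hat d_s\rangle$ and $\langle d_2,\hat d_z\rangle$ alone. Your final clause is also incomplete: after $\hat d_s=0$ you get ${\cal A}^*\hat d_y+\hat d_z=0$, and surjectivity of ${\cal A}$ by itself does not force $\hat d_y=\hat d_z=0$; rather, from $\langle({\cal A}d_1,d_1+d_2),(-\hat d_y,-\hat d_z)\rangle=-\langle d_2,\hat d_z\rangle\le 0$ you get $(-\hat d_y,-\hat d_z)\in\Gamma^\circ=\{0\}$, which is the correct closing move.
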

\begin{proof}  Let
$$
\Gamma: ={\rm conv}\, \left \{ \bigcup \limits_{x\in \Lambda_D(\bar s, \bar y, \bar w, \bar z)} \left(  \left ( \begin{array}{c} {\cal A} \\ {\cal I} \end{array} \right) T_{\mathbb{S}^p_+}( x) \cap {\bar s}^{\perp}
  + \{0\} \times \left(T_{\cal P}({x}) \cap \bar{z}^{\perp}\right)\right) \right \} .
$$

$``{\rm (i)}\Longrightarrow {\rm (ii)}"$ Suppose that (\ref{eq:extendedSRCQPrimal}) does not hold. Then, by using the similar arguments as in the first part of the proof   for  Proposition \ref{eqiv-SOSC-RCQ}, we know that there exists
$0\ne \bar{h}= (\bar{h}_1, \bar{h}_2) \in \Re^m\times \mathbb{S}^p$ such that
$$
\langle  \bar{h}, d\rangle \ge 0, \quad \forall\, d\in \Gamma,
$$
which implies that for any $ x\in \Lambda_D(\bar s, \bar y, \bar w, \bar z)$,
\begin{equation}\label{eq:separationP}
{\cal A}^* \bar{h}_1 +\bar{h}_2 \in \left(T_{\mathbb{S}^p_+}(x) \cap {\bar s}^{\perp}\right)^* \quad \& \quad \bar{h}_2\in \left(T_{\cal P}({x}) \cap \bar{z}^{\perp}\right)^*.
\end{equation}

Let $x\in \Lambda_D(\bar s, \bar y, \bar w, \bar z)$ be fixed but arbitrarily chosen.  Then
 $0 \in x+N_{\mathbb{S}^{p}_+}(\bar s)$ and $0 \in x+   \partial \theta(\bar z)$. Since  $0 \in x+N_{\mathbb{S}^{p}_+}(\bar s)$ if and only if  $x\in N_{\mathbb{S}^{p}_-}(- s)$, we can assume that $A:= x$, $B: = -\bar{s}$ and $C:=-\bar{s}+x$ have the spectral decompositions as in
 (\ref{eq:spectralDecomposition}). Then we know from
 (\ref{eq:separationP}), part (i) of  Lemma \ref{lem:projection-sigma-matrix}
 and part (ii) of Lemma \ref{lem:projection-sigmaP} that
 $$
 P_\alpha ^T \left({\cal A}^* \bar{h}_1 +\bar{h}_2\right)P_\gamma =0, \quad {\cal A}^* \bar{h}_1 +\bar{h}_2\in  -({\cal C}_{\mathbb{S}_-^p} (-\bar s+x) )=T_{\mathbb {S}_+^p}(\bar s) \cap {x}^{\perp}  \quad \& \quad \bar{h}_2\in S_{x, \bar{z}}.
 $$
Let $d_s = -({\cal A}^* \bar{h}_1 +\bar{h}_2)$, $d_w =0\in {\cal W}$, $d_y =\bar h_1$ and $d_z =\bar h_2$. Then
we have
$$0\ne (d_s, d_y, d_w, d_z) \in {\cal C}(\bar s, \bar y, \bar w, \bar z )\quad \& \quad \langle x, d_{s}{\bar s}^\dagger  d_{s}\rangle=0,$$
which contradicts (\ref{eq:sosoc-dual}). This   completes the proof of ${\rm (i)}\Longrightarrow {\rm (ii)}$.

$``{\rm (ii)}\Longrightarrow {\rm (i)}"$ For the sake of contradiction   suppose that the second-order sufficient optimality condition (\ref{eq:sosoc-dual}) for  the dual  problem (\ref{eq:QSDPD0})  at  $(\bar s, \bar w, \bar y, \bar z)$ does not hold. Then  there exists $0\ne (d_s ,d_y, d_w, d_z  ) \in {\cal C}(\bar s, \bar y, \bar w, \bar z )$ such that
$$
\sup_{x\in \Lambda_D(\bar s, \bar y, \bar w, \bar z )}  \left\{   \langle {\cal Q}d_w,d_w\rangle + 2 \langle x, d_{s}{\bar s}^\dagger  d_{s}\rangle\right\}=0,
$$
which implies
$$
\langle {\cal Q}d_w,d_w\rangle=0 \quad \& \quad \langle x, d_{s}{\bar s}^\dagger  d_{s}\rangle=0, \quad \forall\, x\in \Lambda_D(\bar s, \bar y, \bar w, \bar z ).
$$
By using the fact that $d_w\in {\rm Range\,}{\cal Q}$, we know that $d_w=0$. Then, by mimicking the proof for  the second part of Proposition \ref{eqiv-SOSC-RCQ}, we can show that $d_s=0$, $d_y=0$ and $d_z=0$ and reach a contradiction to complete the proof of this part. The details are omitted here.
\end{proof}

If  $\Lambda_D(\bar s, \bar w, \bar y, \bar z )$ happens to be  a singleton, we have the following corollary.
\begin{corollary}\label{Coro-convex-caseD}
  Let   ${\cal W}={\rm Range\,} {\cal Q}$ and $(\bar s, \bar y, \bar w, \bar z)\in \mathbb{S}^p_+\times {\cal W} \times \Re^m \times \mathbb{S}^p_+$ be an optimal solution to the dual  problem (\ref{eq:QSDPD0})
 with $\Lambda_D(\bar s, \bar y, \bar w, \bar z ) = \{ \bar{x}\}$.  Then   the following two conditions are equivalent$:$
\begin{description}
\item[(i)]
The second-order sufficient optimality condition  for    the dual  problem (\ref{eq:QSDPD0}) holds at  $(\bar s, \bar y, \bar w, \bar z):$
\begin{equation}\label{eq:sosoc-dualU}
  \langle {\cal Q}d_w,d_w\rangle + 2 \langle\bar{ x}, d_{s}{\bar s}^\dagger  d_{s}\rangle  > 0, \ \forall\, 0\ne (d_s, d_y,d_w,d_z)\in {\cal C}(\bar s, \bar y, \bar w, \bar z ).
\end{equation}
 \item[(ii)] The   SRCQ      for the primal problem   (\ref{eq:QSDP})  holds at  $   \bar x  $  with respect to  $(\bar s, \bar y, \bar w, \bar z):$
      \begin{equation}\label{eq:extendedSRCQPrimalU}
  \left ( \begin{array}{c} {\cal A} \\ {\cal I} \end{array} \right) T_{\mathbb{S}^p_+}( \bar{x}) \cap {\bar s}^{\perp}
  + \{0\}  \times \left(T_{\cal P}(\bar{x}) \cap \bar{z}^{\perp}\right)   =\Re^m\times \mathbb{S}^p.
\end{equation}
     \end{description}
\end{corollary}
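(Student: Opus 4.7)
The plan is to derive Corollary \ref{Coro-convex-caseD} as an immediate specialization of Proposition \ref{eqiv-SOSC-RCQ2015} to the case where the dual multiplier set collapses to a singleton, namely $\Lambda_D(\bar s, \bar y, \bar w, \bar z) = \{\bar x\}$. Under this hypothesis all the union/supremum/convex-hull operations over $\Lambda_D$ in Proposition \ref{eqiv-SOSC-RCQ2015} degenerate, so there is essentially nothing new to prove.

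First I would rewrite condition (i) of the corollary by observing that when $\Lambda_D(\bar s, \bar y, \bar w, \bar z) = \{\bar x\}$, the supremum in (\ref{eq:sosoc-dual}) is taken over the single point $\bar x$, so
\[
\sup_{x \in \{\bar x\}}\left\{\langle {\cal Q}d_w,d_w\rangle + 2\langle x, d_s\bar s^{\dagger}d_s\rangle\right\} = \langle {\cal Q}d_w,d_w\rangle + 2\langle \bar x, d_s\bar s^{\dagger}d_s\rangle,
\]
which is exactly the left-hand side of (\ref{eq:sosoc-dualU}). Moreover the critical cone ${\cal C}(\bar s,\bar y,\bar w,\bar z)$ as defined in Proposition \ref{eqiv-SOSC-RCQ2015} does not depend on the choice of $x$ beyond the constraints $d_s\in T_{\mathbb{S}^p_+}(\bar s)\cap x^{\perp}$ and $d_z\in S_{x,\bar s}$, so with $x=\bar x$ it matches the critical cone appearing in (\ref{eq:sosoc-dualU}).

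Next I would verify that the extended SRCQ (\ref{eq:extendedSRCQPrimal}) reduces to the SRCQ (\ref{eq:extendedSRCQPrimalU}) in the singleton case. Since $T_{\mathbb{S}^p_+}(\bar x)\cap \bar s^{\perp}$ and $T_{\cal P}(\bar x)\cap \bar z^{\perp}$ are both convex cones, the set
\[
\left(\begin{array}{c}{\cal A}\\{\cal I}\end{array}\right)\bigl(T_{\mathbb{S}^p_+}(\bar x)\cap \bar s^{\perp}\bigr) + \{0\}\times \bigl(T_{\cal P}(\bar x)\cap \bar z^{\perp}\bigr)
\]
is a convex cone (image of a convex cone under a linear map, plus a convex cone). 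Hence the ``$\mathrm{conv}$'' operation in (\ref{eq:extendedSRCQPrimal}) becomes redundant when the union is taken over the single point $\bar x$, and (\ref{eq:extendedSRCQPrimal}) collapses exactly to (\ref{eq:extendedSRCQPrimalU}).

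Combining these two reductions, the equivalence of (i) and (ii) in Corollary \ref{Coro-convex-caseD} is a direct consequence of the equivalence proved in Proposition \ref{eqiv-SOSC-RCQ2015}, so no further argument is needed. There is no main obstacle here; the only thing worth being careful about is checking that the sum of two convex cones is convex so that the convex hull does not alter the extended SRCQ set in the singleton case.
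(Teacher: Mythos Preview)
Your proposal is correct and matches the paper's approach: the corollary is stated immediately after Proposition \ref{eqiv-SOSC-RCQ2015} with no proof, as a direct specialization to the singleton case $\Lambda_D(\bar s,\bar y,\bar w,\bar z)=\{\bar x\}$. Your explicit verification that the supremum and the convex hull both trivialize in this case is exactly the reasoning the paper leaves implicit.
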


By noting that $(\bar{x},\bar{u}, \bar{s},\bar{y},\bar{z},\bar{v})$ is a solution to the KKT system (\ref{eq:LS-P}), i.e.,
$ G_P(\bar{x},\bar{u}, \bar{s},\bar{y},\bar{z},\bar{v})=0$, if and only if $F_{ {D}}  (\bar s,\bar y,\bar w,  \bar z,\bar{x})=0$ for some $\bar w\in {\cal W}$ such that ${\cal Q} \bar w={\cal Q}\bar{x}$,
we can   now state our main theorem of this section.

\begin{theorem}\label{eqiv-calmness} Suppose that  ${\cal W}={\rm Range\,} {\cal Q}$. Let $(\bar{x},\bar{u}, \bar{s},\bar{y},\bar{z},\bar{v})\in \mathbb{S}^p\times \mathbb{S}^p\times \mathbb{S}^p\times \Re^m \times  \mathbb{S}^p\times  \mathbb{S}^p$ be such that $ G_P(\bar{x},\bar{u}, \bar{s},\bar{y},\bar{z},\bar{v})=0$
 and $\bar{w}$ be the unique point in $  {\cal W}$   such that ${\cal Q} \bar{x} ={\cal Q} \bar{w}$.
Then  the following statements are equivalent to each other:
\begin{description}
\item[(i)] The second order sufficient optimality condition (\ref{eq:sosoc-primalU}) for the primal problem (\ref{eq:QSDP}) holds at $(\bar{x}, \bar{u})$  and the second order sufficient optimality condition (\ref{eq:sosoc-dualU})  for  the dual  problem (\ref{eq:QSDPD0})  holds at  $(\bar s, \bar y, \bar w, \bar z)$.
    \item[(ii)]     The   SRCQ    condition (\ref{eq:extendedSRCQPrimalU}) for the primal problem   (\ref{eq:QSDP})  holds at  $ \bar x$  with respect to  $(\bar s, \bar y, \bar w, \bar z )$
       and the     SRCQ  condition (\ref{eq:extendedSRCQD0U})  for the dual  problem (\ref{eq:QSDPD0}) holds at  $ (\bar s,\bar y,\bar z,\bar v)$  with respect to  $(\bar{x},\bar{u})$.
\item[(iii)] $(G_P)^{-1}$  is isolated calm at the origin with respect to $(\bar x, \bar u, \bar s, \bar y, \bar w, \bar z, \bar{v})$.
\item[(iv)]   $(G_{ {D}} )^{-1}$ is isolated calm at the origin with respect to $(\bar s,\bar y,\bar w, (\bar z,\theta(\bar z)),\bar x,\bar u, (-\bar{x},-1))$.

 \item[(v)]   $(F_P)^{-1}$  is isolated calm at the origin with respect to $(\bar{x}, \bar u, \bar y,   \bar z)$.

     \item[(vi)]   $(F_D)^{-1}$  is isolated calm at the origin with respect to $( \bar s, \bar y, \bar w, \bar z,\bar x)$.
    \item[(vii)]  The second order sufficient optimality condition (\ref{eq:sosoc-primalU}) for the primal problem (\ref{eq:QSDP}) holds at $(\bar{x},\bar{u})$  and the  SRCQ    condition (\ref{eq:extendedSRCQPrimalU}) for the primal problem   (\ref{eq:QSDP})  holds at  $ \bar x $  with respect to  $(\bar s, \bar y, \bar w, \bar z )$.
    \item[(viii)] The second order sufficient optimality condition  (\ref{eq:sosoc-dualU})   for  the dual  problem (\ref{eq:QSDPD0})   holds  at  $(\bar s, \bar y, \bar w, \bar z)$ and the     SRCQ  condition (\ref{eq:extendedSRCQD0U})  for the dual  problem (\ref{eq:QSDPD0}) holds at  $ (\bar s,\bar y,\bar w, \bar z)$  with respect to  $(\bar{x},\bar{u})$.
\end{description}
\end{theorem}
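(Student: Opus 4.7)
The plan is to prove the eight assertions equivalent by a hub-and-spoke structure that routes every statement through statement (ii), with the iso-calmness conditions forming the left branch and the SOSC/SRCQ conditions the right branch. First, the ``short'' equivalences on the iso-calmness side are exactly the content of Propositions \ref{propCalm5.1} and \ref{propCalm5.2}, giving (iii)$\Leftrightarrow$(v) and (iv)$\Leftrightarrow$(vi). Next, I would apply Corollary \ref{Coro-convex-case} (iso-calmness $\Leftrightarrow$ SOSC $+$ SRCQ for linearly constrained, twice continuously differentiable convex SDPs of the form (\ref{1.1})) to the primal problem (\ref{eq:QSDP}) to obtain (iii)$\Leftrightarrow$(vii), and to the epigraph-lifted dual (\ref{eq:QSDPD}) to obtain (iv)$\Leftrightarrow$(viii). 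Since the auxiliary variable $t$ appears linearly in the objective and the constraint $(z,t)\in \epi\theta$ is polyhedral, the SOSC/SRCQ conditions of the lifted problem collapse exactly onto those stated in the theorem for (\ref{eq:QSDPD0}).

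With the spokes established, I would glue (vii) and (viii) onto (ii) using the singleton-multiplier corollaries. Under (vii), the SRCQ primal forces $\Lambda_P(\bar x,\bar u)=\{(\bar s,\bar y,\bar z,\bar v)\}$, so Corollary \ref{Coro-convex-caseP} converts SOSC primal into SRCQ dual, yielding (ii); conversely, (ii) itself contains SRCQ primal (hence $\Lambda_P$ singleton) and the same corollary turns SRCQ dual back into SOSC primal, yielding (vii). An entirely symmetric application of Corollary \ref{Coro-convex-caseD} proves (viii)$\Leftrightarrow$(ii).

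The subtle remaining link is (i)$\Leftrightarrow$(ii). For (i)$\Rightarrow$(ii), I would argue that SOSC primal at $(\bar x,\bar u)$ implies a second-order growth condition by \cite[Theorem 3.86]{BS2000}, which under the convexity of (\ref{eq:QSDP}) forces $(\bar x,\bar u)$ to be the unique primal optimum; hence $\Lambda_D(\bar s,\bar y,\bar w,\bar z)=\{\bar x\}$. Symmetrically, SOSC dual forces uniqueness of the dual optimum $(\bar s,\bar y,\bar w,\bar z)$, which under $\mathcal{W}={\rm Range}\,\mathcal{Q}$ and $\bar v=-\bar z$ is in bijection with $\Lambda_P(\bar x,\bar u)$, so $\Lambda_P$ is also a singleton. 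With both multiplier sets now singletons, Corollaries \ref{Coro-convex-caseP} and \ref{Coro-convex-caseD} upgrade SOSC primal to SRCQ dual and SOSC dual to SRCQ primal, which is exactly (ii). The reverse direction (ii)$\Rightarrow$(i) uses the same corollaries the other way, since each SRCQ in (ii) already enforces the relevant singleton property of its multiplier set.

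The main obstacle I anticipate is the singleton bookkeeping in (i)$\Rightarrow$(ii), namely translating the uniqueness of the dual optimum $(\bar s,\bar y,\bar w,\bar z)$ into the singleton property of the primal multiplier set $\Lambda_P(\bar x,\bar u)$. This is precisely where the hypothesis $\mathcal{W}={\rm Range}\,\mathcal{Q}$ becomes indispensable, as it makes $\mathcal{Q}$ injective on $\mathcal{W}$ and hence determines $\bar w$ uniquely from $\mathcal{Q}\bar w=\mathcal{Q}\bar x$, turning the ``dual-optimum to primal-multiplier'' correspondence into a genuine bijection.
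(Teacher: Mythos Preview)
Your proposal is correct and follows essentially the same architecture as the paper's proof: Propositions~\ref{propCalm5.1}--\ref{propCalm5.2} for (iii)$\Leftrightarrow$(v) and (iv)$\Leftrightarrow$(vi), Corollary~\ref{Coro-convex-case} on the primal for (iii)$\Leftrightarrow$(vii), and Corollaries~\ref{Coro-convex-caseP}--\ref{Coro-convex-caseD} together with the singleton-multiplier bookkeeping to tie (i), (ii), (vii), (viii) together. Your treatment of (i)$\Rightarrow$(ii) via second-order growth and the bijection between dual optima and primal multipliers (using ${\cal W}={\rm Range}\,{\cal Q}$) is exactly the argument the paper leaves implicit when it asserts that each of (i), (ii), (vii), (viii) forces both $\Lambda_P$ and $\Lambda_D$ to be singletons.

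The one genuine routing difference is how (iv)/(vi) are connected to the rest. You apply Corollary~\ref{Coro-convex-case} a second time, to the epigraph-lifted dual (\ref{eq:QSDPD}), to obtain (iv)$\Leftrightarrow$(viii) directly; this is valid, but it obliges you to verify that $G_D$ in (\ref{3.3-hatD}) is precisely the Section~\ref{section:isolatedcalmness} KKT mapping of (\ref{eq:QSDPD}) and that the SRCQ/SOSC for the lifted problem collapse to (\ref{eq:extendedSRCQD0U}) and (\ref{eq:sosoc-dualU}) (the computation of $T_{{\rm epi}\,\theta}(\bar z,\theta(\bar z))\cap(\bar x,1)^\perp$ just before Lemma~\ref{lem:epi-Dirderivative} does most of this, but you should say so). The paper instead proves (v)$\Leftrightarrow$(vi) \emph{directly} by comparing the directional derivatives of $F_P$ and $F_D$, using Lemma~\ref{lem:projection-sigmaP} and the injectivity of ${\cal Q}$ on ${\cal W}={\rm Range}\,{\cal Q}$; this avoids the lifted-dual bookkeeping entirely and gives a shorter bridge. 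Either route closes the graph of equivalences.
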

\begin{proof} By using the fact  that the conditions in  either (i) or (ii) or (vii) or (viii) imply that  $\Lambda_P(\bar{x}, \bar{u})=\{(\bar s,\bar y,\bar z, \bar v)\}$ and  $\Lambda_D(\bar s, \bar y, \bar w, \bar z)=\{\bar x\}$, we obtain  from Corollaries \ref{Coro-convex-caseP} and \ref{Coro-convex-caseD} that
$${\rm (i)} \Longleftrightarrow {\rm (ii)} \Longleftrightarrow {\rm (vii)} \Longleftrightarrow {\rm (viii)}.$$  {}By using  Lemma \ref{lem:projection-sigmaP} and the assumption that ${\cal W} = {\rm Range\,}{\cal Q}$, we can
obtain (v)$\Longleftrightarrow$ (vi) (refer to the proof of Lemma \ref{lem:epi-Dirderivative}).  Thus, by further using Propositions \ref{propCalm5.1} and \ref{propCalm5.2}, we have that the statements (iii)-(vi) are all equivalent to each other.  Finally, by noting  from Corollary \ref{Coro-convex-case} that  (iii) $\Longleftrightarrow$ (vii), we complete the proof.
\end{proof}

Recall that in Theorem \ref{th-1-rate} for the linear convergence rate of the sPADMM, we need Assumption \ref{erB2}. This assumption holds for problem \eqref{eq:QSDP} and its dual \eqref{eq:QSDPD0} if any one of the eight statements
in Theorem \ref{eqiv-calmness} is satisfied. Although Theorem \ref{eqiv-calmness} is only developed for convex composite  quadratic SDP, it is possible to extend it to other convex conic optimization  problems with the positive semi-definite cone being replaced by   some other   non-polyhedral but nice cones such as the second order cone or any finite Cartesian product of the  second order cones and the positive semi-definite cones.

\section{Conclusions}\label{section:final}
In this paper, we have provided a roadmap for the linear rate convergence of the sPADMM for solving linearly constrained convex composite optimization problems.  One significant feature of our approach relies on  neither the  strong convexity nor the strict complementarity. Our linear rate convergence  analysis for the convex composite quadratic programming is quite complete while   significant progress in convex nonlinear semi-definite programming, in particular in convex composite quadratic semi-definite programming, has been achieved.
 Perhaps, the most important   issue left unanswered is to provide  error bound results under weaker conditions for (convex) composite   optimization problems with non-polyhedral cone constraints. Another important   issue is to develop similar results for the inexact version  of the sPADMM, which is often more useful in practice.  However,  given the recent progress made on the inexact symmetric Gauss-Seidel based sPADMM in \cite{CSToh2015}, it does  not  seem to be difficult to extend our analysis to the inexact sPADMM.

\section*{Acknowledgements} The authors would like to thank Ying Cui and Xudong Li at National University of Singapore and Chao Ding at Chinese Academy of Sciences for their comments on an earlier version of this paper.


\end{document}